\newtheorem{theorem}{Theorem}[section]
\newtheorem{lemma}[theorem]{Lemma}
\newtheorem{proposition}[theorem]{Proposition}
\newtheorem{remark}[theorem]{Remark}
\newcommand{\dee}{{\rm d}}
\newcommand{\R}{\mathbb{R}}
\newcommand{\boundaryFormOfDerivative}{\tilde \xi_f}
\newcommand{\DiscreteboundaryFormOfDerivative}{\tilde \xi_{f,h}}
\newcommand{\SecondDiscreteBoundaryFormOfDerivative}{\bar \xi_{f,h}}
\DeclareMathOperator{\Div}{div}
\DeclareMathOperator*{\argmin}{arg\,min}
\title[$W^{1,\infty}$ shape optimisation with Lipschitz domains]{A novel $W^{1,\infty}$ approach to shape optimisation with Lipschitz domains}
\author[K. Deckelnick]{Klaus Deckelnick}
\address{Institut f\"ur Analysis und Numerik, Otto-von-Guericke-Universit\"at Magdeburg, 39106 Magdeburg, Germany}
\email{Klaus.Deckelnick@ovgu.de}
\author[P. J. Herbert]{Philip J. Herbert}
\address{Mathematisches Institut,
Campus Koblenz,
Universit\"at Koblenz-Landau,
Universit\"atsstr. 1,
D-56070 Koblenz, Germany}
\email{pherbert@uni-koblenz.de}
\author[M. Hinze]{Michael Hinze}
\address{Mathematisches Institut,
Campus Koblenz,
Universit\"at Koblenz-Landau,
Universit\"atsstr. 1,
D-56070 Koblenz, Germany}
\email{hinze@uni-koblenz.de}
\date{\today}
\begin{document}

\maketitle
\begin{abstract}
    This article introduces a novel method for the implementation of shape optimisation with Lipschitz domains. We propose to use the shape derivative to determine deformation fields which represent steepest descent directions of the shape functional in the $W^{1,\infty}-$ topology. The idea of our approach is demonstrated for shape optimisation of $n$-dimensional star-shaped domains, which we represent as functions defined on the unit $(n-1)$-sphere. In this setting we provide the specific form of the shape derivative and prove the existence of solutions to the underlying shape optimisation problem. Moreover, we show the existence of a direction of steepest descent in the $W^{1,\infty}-$ topology. We also note that shape optimisation in this context is closely related to the $\infty-$Laplacian, and to optimal transport, where we highlight the latter in the numerics section. We present several numerical experiments in two dimensions illustrating that our approach seems to be superior over a widely used Hilbert space method in the considered examples, in particular in developing optimised shapes with corners.
\end{abstract}
% \subjclass[2010]{49Q10}
%\keywords{PDE constrained shape optimisation; Lipschitz domains; $W^{1,\infty}$-topology; $\infty$-Laplacian}
\section{Introduction}
In the present work we are interested in the numerical solution of a certain class of shape optimisation problems
\begin{displaymath}
\min \mathcal J(\Omega), \; \Omega \in \mathcal S,
\end{displaymath}
where $\mathcal S$ denotes the set of admissible shapes to be specified in the respective application. A common approach in order to calculate at least local minima of
$\mathcal J$ consists in applying the steepest descent method by using the shape derivative of $\mathcal J$. More precisely, given a shape $\Omega \in \mathcal S$, one determines a
descent vector field $V^*\colon \mathbb R^n \rightarrow \mathbb R^n$ which is Lipschitz and satisfies $\mathcal J'(\Omega)(V^*)<0$ \cite[Section 5.2]{HenPie18}, then one sets $\Omega_{\mbox{new}}:=(\mbox{id} + \alpha V^*)(\Omega)$ for
a suitable step size $\alpha>0$.
A common approach in order to determine a descent direction $V^*$ employs a Hilbert space setting. Let $H$ be a Hilbert space
with scalar product $a(\cdot,\cdot)$, then $V$ is determined by minimising
\begin{displaymath}
V \mapsto a(V,V) + \mathcal J'(\Omega)(V), V \in H.
\end{displaymath}
A nice discussion of the pros and cons of this approach can be found in Section 5.2 of \cite{ADJ21}.
Typical choices of $H$ are the Sobolev spaces $H^m(\mathbb R^n;\mathbb R^n)$, where
one however needs to choose $m$ sufficiently large in order to obtain a Lipschitz transformation. Ideally, one would like to
determine a Lipschitz vector field $V^*$ such that $\| V^* \|_{W^{1,\infty}(\Omega;\R^n)}\leq 1$ and
\begin{displaymath}
\mathcal J'(\Omega)(V^*) = \min_{\| V \|_{W^{1,\infty}(\Omega;\R^n)} \leq 1} \mathcal J'(\Omega)(V).
\end{displaymath}
However, compared to a Hilbert space, the functional analytic properties of the space of Lipschitz functions are less favourable making
this approach difficult both from an analytical and a computational point of view. In this paper we aim to address this task 
\begin{comment}
A way around this restriction is to leave the Hilbertian framework and to consider
for $p \geq 2$ the regularisation
\begin{displaymath}
V \mapsto \frac{1}{p} \int | DV |^p + \mathcal J'(\Omega)(V)
\end{displaymath}
as well as the limit $p \rightarrow \infty$. A problem of this type has been studied by Ishii and Loreti in \cite{IshLor05} starting from the $p$-Laplace relaxed problem
\[
v_p = \argmin_{v\in W_0^{1,p}(\Omega)} I_p(v):= \int\limits_\Omega f(x)v(x)+\frac{1}{p}|\nabla v(x)|^p\dee x,
\]
where for $p \in [1,\infty]$, $W^{1,p}(\Omega)$ denotes the standard Sobolev space and $W^{1,p}_0(\Omega)$ denotes functions in $W^{1,p}(\Omega)$ which have vanishing trace.
Here $f \in C(\bar \Omega)$ is given. By considering the limit $p \to \infty$, it is shown that the sequence $(v_p)_{p\geq 2}$ (or at least a sub-sequence) converges to a solution $v^* \in W^{1,\infty}_0(\Omega)$
of the variational problem
\[
v^* \in \argmin_{\{v\in W_0^{1,\infty}(\Omega), \|\nabla v\|_{\infty} \le 1\}} I_\infty(v):= \int\limits_\Omega f(x)v(x)\dee x.
\]
In addition, \cite[Theorem 2.1]{IshLor05} gives an explicit formula for $v^*$ in the case $\Omega =(0,a) \subset \mathbb R$.
\end{comment}
%We note that it is also possible to consider a minimiser over H\"older functions, rather than Lipschitz functions, this is done in \cite{Jyl15}.
in the special case  that the admissible domains
$\Omega \subset \mathbb R^n$ are star-shaped with respect to the origin so that shapes and their perturbations can be described in terms of scalar functions
$f\colon \mathbb S^{n-1}:= \lbrace x \in \mathbb R^n \, | \, |x|=1 \rbrace \rightarrow \mathbb R$.
The restriction to star-shaped domains allows for a deeper analysis at the expense of generality and is regularly considered in shape optimisation, e.g. \cite{EppHarRei07}, 
\cite{BouChSa20}. 

In the described setting we consider the following model problem 
\begin{equation} \label{model}
\inf_{\Omega \in \mathcal S} \mathcal J(\Omega):= \frac{1}{2} \int_\Omega | u_\Omega - z |^2 \dee x,
\end{equation}
where $u_\Omega \in H^1_0(\Omega)$ is the unique weak solution of 
\begin{equation} \label{state}
\int_\Omega \nabla u_\Omega \cdot \nabla \eta \, \dee x = \int_\Omega F \,  \eta \, \dee x \qquad \forall \eta \in H^1_0(\Omega)
\end{equation}
and $z \in H^1(D), \, F \in L^2(D)$ are given functions on some hold--all domain $D\subset \R^n$.
We note that this energy and PDE problem are probably the most simple example of PDE constrained shape optimisation.
Some simple extensions might be to consider higher powers in the integrand of the energy, or parts of the domain which obey a Neumann boundary condition.
It is expected that the strategy we consider may be applied to more general second order PDE constrained shape optimisation problems, in particular problems from elasticity.
In Section 2 we reformulate  \eqref{model} as a minimisation problem
on a suitable subset of $W^{1,\infty}(\mathbb S^{n-1})$  and calculate the shape derivative in terms of the solutions of the state and adjoint equations.
Furthermore, we prove the existence of an optimal Lipschitz--continuous descent direction, for which we
derive an explicit formula in the case $n=2$. Using a discrete version of this formula together with finite element discretisations of the state and adjoint equation 
we obtain an approximation of the optimal descent direction which is used in the steepest descent method. The numerical experiments, which are all two-dimensional, shown in Section 4 demonstrate that
this novel approach performs better than a very typical method which relies on  $H^1$--regularisation. Let us also mention that our approach may be related to optimal transport,
see \cite{San15}. \\\\
There exists a vast amount of literature related to shape optimisation problems. We first mention the seminal works of Delfour and Zol\'esio \cite{DelZol11},  of Sokolowski and Zol\'esio \cite{SZ92}, and the recent overview article \cite{ADJ21} by Allaire, Dapogny, and Jouve, where also a comprehensive  bibliography on the topic can be found. The mathematical and numerical analysis of shape optimisation problems has a long history, see e.g. \cite{BFLS97,GM94,MS76,S80}. With increasing computing power, shape optimisation has experienced a renaissance in recent years \cite{SSW15,SSW16,SW17}, especially in fluid mechanical applications \cite{BLUU09,FLUU17,GHHK15,GHKL18,HRHD18,HUU20,HSU21,KMHR19,SISG13}. A steepest descent method for the numerical solution utilising a Hilbert-space framework for PDE constrained shape optimisation is investigated in \cite{HP15}. A comparison of numerical approximations of Hilbertian shape gradients in boundary and volume form is presented in \cite{HPS15}.
A particularly interesting Hilbertian method is considered in \cite{IglSturWec18}, based on Cauchy-Riemann equations, where the authors have an example which is able to form corners.
A downside they mention is that the method is quite specific to two-dimensional shapes.
A specific choice of Hilbert space would be reproducing kernel Hilbert spaces which have been considered in \cite{EigStu18}, where an explicit form of the gradient is shown for certain kernels.
Finally we recall that an extensive summary of the state of the art in numerical approaches to shape and topology optimisation is given in \cite[Chapter 6-9]{ADJ21}. 

\section{Analysis of a model problem}
Let us begin by introducing some notation.
The space of Lipschitz functions on $\mathbb{S}^{n-1}$ is given as
\[
    C^{0,1}(\mathbb{S}^{n-1}) := \left\{ u \colon \mathbb{S}^{n-1} \to \R \,|\, \sup_{x,y \in \mathbb{S}^{n-1} ,\, x \neq y} \frac{|u(x)-u(y)|}{d(x,y)} < \infty \right\},
\]
where $d\colon \mathbb{S}^{n-1}\times \mathbb{S}^{n-1} \to \R$ is the intrinsic metric on $\mathbb{S}^{n-1}$.
One may equivalently define $C^{0,1}(\mathbb S^{n-1})$ using the standard Euclidean distance in the semi-norm.
We will be using Lebesgue and Sobolev spaces on $\mathbb{S}^{n-1}$, equipped with the $(n-1)$-dimensional Hausdorff measure on $\mathbb{S}^{n-1}$.
Since $C^{0,1}(\mathbb S^{n-1})\cong W^{1,\infty}(\mathbb S^{n-1})$ \cite{EvaGar15}, the tangential gradient $\nabla_T f$ is defined almost everywhere on $\mathbb S^{n-1}$.
We give the explicit definition of the tangential gradient by its definition on charts.
Let $\Theta \subset \R^{n-1}$ be open and bounded and $X \colon \Theta \to \mathbb{S}^{n-1}$ be a $C^2$-diffeomorphism onto its image, $U:= X(\Theta)$.
Then, for almost every $\omega \in U$,
\begin{displaymath}
\nabla_T f(\omega) := \left( \sum_{i,j=1}^{n-1} g^{ij} \frac{\partial (f\circ X)}{\partial \theta_j} \frac{\partial X}{\partial \theta_i} \right) \circ X^{-1}(\omega), %\nabla \tilde f(\omega) - (\nabla \tilde f \cdot \omega) \omega, \quad \omega \in \mathbb S^{n-1},
\end{displaymath}
where $\{\theta_i\}_{i=1}^{n-1}$ are coordinates on $\Theta$ and $g^{ij}$ is the $ij$ element of the inverse matrix of $G$, which has elements $g_{ij} = \frac{\partial X}{\partial \theta_i}\cdot \frac{\partial X}{\partial \theta_j}$ for $i,j = 1,...,n-1$.
For more details on this parametric representation, see \cite{DecDziEll05}, in particular equation (2.14).
We note that this definition is independent of the paramaterisation $X$ as well as for $f \in W^{1,\infty}(\mathbb{S}^{n-1})$
\begin{equation} \label{nablat}
\nabla _T f \in L^\infty(\mathbb S^{n-1}), \quad \nabla_T f(\omega) \cdot \omega =0 \mbox{ a.e. on } \mathbb S^{n-1}.
\end{equation}
\subsection{Reformulation and existence of a minimum}
A bounded domain $\Omega \subset \mathbb R^n$ is star--shaped with respect to the origin if $[0,x] \subset \Omega$ for every $x \in \Omega$, where for $x,y \in \R^n$, $[x,y]:= \{ x + t(y-x) \in \R^n : t \in [0,1] \}$.
Furthermore, $\Omega$ is called star--shaped with respect to $B_\epsilon(0)$ if $[y,x] \subset \Omega$ for every $y \in B_\epsilon(0)$ and every $x \in \Omega$. 
For a bounded domain $\Omega$ that is star--shaped with respect to the
origin we denote by $f_\Omega\colon \mathbb S^{n-1} \rightarrow \mathbb R_{>0}$ its radial function given by
\begin{equation} \label{radial}
f_{\Omega}(\omega):= \sup \lbrace \lambda >0 \, | \, \lambda \omega \in \Omega \rbrace, \quad \omega \in \mathbb S^{n-1}.
\end{equation}
It is shown in \cite[Lemma 2, Section 3.2]{Bur98} that if $\Omega$ is star--shaped with respect to $0$, then it is star--shaped with respect to a ball $B_\epsilon(0)$  if and only if  $f_\Omega \in C^{0,1}(\mathbb{S}^{n-1})$.
We wish to show that a given positive function which has bounded tangential gradient generates a domain which is star--shaped with respect to a ball.
In order to formulate this, we assign to a positive function $f\colon\mathbb S^{n-1} \rightarrow \mathbb R$ the set 
\begin{equation} \label{omegaf}
\Omega_f:= \lbrace x \in \mathbb R^n \, | \, x=0 \mbox{ or } |x| < f(\omega_x), x \neq 0 \rbrace, \quad \mbox{ where } \omega_x=\frac{x}{|x|}.
\end{equation}
Clearly, $f$ is the radial function of $\Omega_f$ and, if $\Omega$ is star--shaped, then $\Omega_{f_\Omega} = \Omega$.

\begin{lemma} \label{starshaped}
Given $f \in W^{1,\infty}(\mathbb S^{n-1})$ with $\displaystyle f_0:= \min_{\omega \in \mathbb S^{n-1}} f(\omega)>0$ and $L := \Vert \nabla_{T} f \Vert_{L^{\infty}(\mathbb{S}^{n-1})}$. Then:
\\[1.2mm]
(i) The function $f$ satisfies 
\begin{equation}\label{eq:LipschitzBoundProof}
    |f(\omega_2) - f(\omega_1)| \leq L d(\omega_1,\omega_2) \leq L\frac{\pi}{2}|\omega_1-\omega_2|.
\end{equation}
%\\[1.2mm]
(ii) $\Omega_f$ is star--shaped with respect to  $B_\epsilon(0)$, where $\epsilon = \frac{f_0^2}{{L\pi + f_0}}$. \\[1.2mm]
(iii) Let  $\Phi_f\colon \mathbb R^n \rightarrow \mathbb R^n$ be defined by
\begin{equation} \label{defPhi}
\Phi_f(x):= 
\left\{
\begin{array}{cl}
f (\omega_x) x, & x \neq 0, \\
0, & x=0.
\end{array}
\right.
\end{equation}
Then $\Phi_f$ is bi--Lipschitz from $B$ onto $\Omega_f$, where $B=\lbrace x \in \mathbb R^n \, | \, |x|<1 \rbrace$. In addition
\begin{equation} \label{phideriv}
D \Phi_f(x) = f (\omega_x) I + \omega_x  \otimes \nabla_{T} f (\omega_x) \; \mbox{ and } \;
\det D \Phi_f(x)=  f (\omega_x)^n  \; \mbox{ a.e. in } B,
\end{equation}
where $(a \otimes b)_{ij} := a_i b_j$ for vectors $a,b \in \R^n$.
\end{lemma}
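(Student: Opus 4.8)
For part (i), I would prove the first inequality by integrating the tangential gradient along a length-minimising geodesic. Let $\gamma\colon[0,d(\omega_1,\omega_2)]\to\mathbb S^{n-1}$ be the unit-speed great-circle arc joining $\omega_1$ to $\omega_2$. Since $f\in W^{1,\infty}(\mathbb S^{n-1})\cong C^{0,1}(\mathbb S^{n-1})$, the composition $f\circ\gamma$ is Lipschitz, hence absolutely continuous, and the chain rule gives $(f\circ\gamma)'(t)=\nabla_T f(\gamma(t))\cdot\gamma'(t)$ for a.e.\ $t$. As $|\gamma'(t)|=1$ and $|\nabla_T f|\le L$ a.e., the fundamental theorem of calculus yields $|f(\omega_2)-f(\omega_1)|\le L\,d(\omega_1,\omega_2)$. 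The second inequality is purely geometric: writing $\theta=d(\omega_1,\omega_2)\in[0,\pi]$ for the angle between the unit vectors, one has $|\omega_1-\omega_2|=2\sin(\theta/2)$, so the claim $d(\omega_1,\omega_2)\le\frac\pi2|\omega_1-\omega_2|$ reads $\theta\le\pi\sin(\theta/2)$, equivalent to $s/\sin s\le\frac\pi2$ on $(0,\pi/2]$, which holds since $s\mapsto s/\sin s$ is increasing with value $\pi/2$ at $s=\pi/2$.

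For part (ii), the plan is to show directly that for every $y\in B_\epsilon(0)$, every $x\in\Omega_f$, and every point $z=(1-s)y+sx$ with $s\in[0,1]$, one has $z\in\Omega_f$, i.e.\ $|z|<f(\omega_z)$ (the endpoints being trivial). The main tool is the Lipschitz bound from (i) in the form $f(\omega_z)\ge f(\omega_x)-L\theta_z$, where $\theta_z=\angle(\omega_z,\omega_x)$ and $f(\omega_x)\ge|x|$. The crux is a dichotomy controlling the angular deviation $\theta_z$: if $|z|<f_0$ then $z\in\Omega_f$ at once because $f\ge f_0$; otherwise $|z|\ge f_0>\epsilon>|y|$, and since $|z|\le\max(|y|,|x|)$ this forces $|x|\ge|z|\ge f_0$. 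Moreover the foot $P$ of the perpendicular from the origin to the line through $y$ and $x$ satisfies $|OP|\le|y|<\epsilon$, and any segment point on the far side of $P$ from $x$ would have modulus at most $|y|<f_0$; hence $z$ lies between $P$ and $x$, so that $\theta_z=\arccos(|OP|/|x|)-\arccos(|OP|/|z|)$ with $f_0\le|z|\le|x|$. A mean-value computation then reduces the inequality $|x|-|z|\ge L\theta_z$ (whence $f(\omega_x)-|z|>L\theta_z$, using $|x|<f(\omega_x)$) to $L\,|OP|\le\xi\sqrt{\xi^2-|OP|^2}$ for some $\xi\in[f_0,|x|]$, which is guaranteed by $L\epsilon\le f_0\sqrt{f_0^2-\epsilon^2}$; a direct check shows the stated $\epsilon=\frac{f_0^2}{L\pi+f_0}$ satisfies this. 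I expect this angular estimate, together with the dichotomy that automatically removes the points of large angular deviation, to be the main obstacle.

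For part (iii), bijectivity is immediate: $\Phi_f$ sends $x\in B$ to a point of modulus $f(\omega_x)|x|<f(\omega_x)$ in direction $\omega_x$, hence into $\Omega_f$, and the explicit inverse $\xi\mapsto\xi/f(\omega_\xi)$ shows it is onto. For the Lipschitz bound I would avoid the singularity at the origin through the decomposition $\Phi_f(x)-\Phi_f(x')=f(\omega_x)(x-x')+\bigl(f(\omega_x)-f(\omega_{x'})\bigr)x'$: the first term is bounded by $\|f\|_{L^\infty}|x-x'|$, and for the second I would combine (i) with the elementary estimate $|\omega_x-\omega_{x'}|\le\frac{2|x-x'|}{\max(|x|,|x'|)}$ for normalised vectors, giving $|f(\omega_x)-f(\omega_{x'})|\,|x'|\le L\pi|x-x'|$; this also covers $x'=0$. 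Applying the same argument to $1/f$ (Lipschitz since $f\ge f_0>0$) shows $\Phi_f^{-1}$ is Lipschitz, so $\Phi_f$ is bi-Lipschitz. Finally, on $B\setminus\{0\}$ a direct computation using $\nabla_x[f(\omega_x)]=\frac1{|x|}\nabla_T f(\omega_x)$ gives $D\Phi_f(x)=f(\omega_x)I+\omega_x\otimes\nabla_T f(\omega_x)$, and the rank-one determinant identity $\det(fI+a\otimes b)=f^{n-1}(f+a\cdot b)$ together with the orthogonality $\omega_x\cdot\nabla_T f(\omega_x)=0$ from \eqref{nablat} yields $\det D\Phi_f(x)=f(\omega_x)^n$.
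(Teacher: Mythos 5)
In part (i) there is a genuine gap at the words ``the chain rule gives $(f\circ\gamma)'(t)=\nabla_T f(\gamma(t))\cdot\gamma'(t)$ for a.e.\ $t$''. For a merely Lipschitz $f$, Rademacher's theorem gives differentiability of $f$ away from an $\mathcal H^{n-1}$-null subset of $\mathbb S^{n-1}$, and for $n\ge 3$ your fixed geodesic $\gamma$ is itself a null set: it may lie entirely inside the non-differentiability set, in which case $\nabla_T f(\gamma(t))$ is defined for no $t$ and the asserted identity is vacuous; the weak (Sobolev) chain rule likewise does not restrict to a single lower-dimensional curve. What you know a priori is only that $f\circ\gamma$ is Lipschitz and a.e.\ differentiable --- the bound $|(f\circ\gamma)'|\le L$ is essentially the statement being proved. (For $n=2$ your argument is fine, since an arc of $\mathbb S^1$ has positive one-dimensional measure and $\nabla_T f$ exists a.e.\ on it.) The paper circumvents exactly this point: it extends $f$ radially to the annulus $S_\delta$, mollifies to obtain smooth $\hat f_\rho$ with $|\nabla \hat f_\rho|\le L/(1-\delta)$, integrates along the curve for the smooth approximations, and then lets $\rho\to0$ and $\delta\to0$. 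Your proof is repaired by inserting this mollification (or a Fubini argument over a tube of nearby geodesics); your treatment of the second inequality via $\theta\le\pi\sin(\theta/2)$ and the monotonicity of $s/\sin s$ is correct.

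Parts (ii) and (iii) are correct, and your (ii) is genuinely different from the paper's. The paper sets $\tilde x=tx$, bounds $|\omega_{\tilde x}-\omega_z|\le \frac{2}{|\tilde x|}|\tilde x-z|$, and concludes with a short chain of triangle inequalities tailored to the stated $\epsilon$; you instead work in the plane through $0$, $y$, $x$, use the exact angle $\theta_z=\arccos(p/|x|)-\arccos(p/|z|)$ with $p=|OP|$, and a mean-value estimate, your dichotomy $|z|<f_0$ versus $|z|\ge f_0$ correctly eliminating the points on the far side of $P$. Your route is longer and more case-laden, but it buys a strictly better constant: your sufficient condition $L\epsilon\le f_0\sqrt{f_0^2-\epsilon^2}$ holds for every $\epsilon\le f_0^2/\sqrt{L^2+f_0^2}$, which exceeds both the paper's $f_0^2/(L\pi+f_0)$ and the value $\frac{2}{\pi}f_0^2/\sqrt{L^2+f_0^2}$ from Burago quoted in the paper's remark; indeed $f_0^2/\sqrt{L^2+f_0^2}$ is the natural distance-to-tangent-line bound, so your argument is essentially sharp, and the lemma's weaker claim follows a fortiori. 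In (iii) you follow the same route as the paper but supply details it omits: the paper dismisses bi-Lipschitzness as straightforward, whereas you give the decomposition $\Phi_f(x)-\Phi_f(x')=f(\omega_x)(x-x')+\bigl(f(\omega_x)-f(\omega_{x'})\bigr)x'$ together with $|\omega_x-\omega_{x'}|\le 2|x-x'|/\max(|x|,|x'|)$, and the same argument applied to $1/f$ for the inverse; your determinant computation via $\det(fI+a\otimes b)=f^{n-1}(f+a\cdot b)$ and $\omega_x\cdot\nabla_T f(\omega_x)=0$ is equivalent to the paper's observation that $\omega_x\otimes\nabla_T f$ is rank one with vanishing trace, hence nilpotent. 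Modulo the repair in (i) --- on which your (ii) relies through the Lipschitz bound --- the proposal stands.
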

\begin{proof} (i)
For $\delta \in (0,1)$, let us extend $f$ to the open set
$S_\delta := \{ x \in \R^n : |x| \in (1-\delta,1+\delta)\}$
via $\hat{f}(x):= f(\omega_x)$.
It is clear that $\hat{f} \in C^{0,1}(S_\delta)\cong W^{1,\infty}(S_\delta)$ with weak derivative $\nabla \hat{f}(x) = \frac{1}{|x|}\nabla_T f(\omega_x)$ for a.e. $x \in S_\delta$.
In particular
\[
    |\nabla \hat f(x) | = \frac{1}{|x|} | \nabla_T f(\omega_x)| \leq \frac{L}{1-\delta},\, x \in S_\delta.
\]
Denoting by $\hat f_\rho$ the standard mollification of $\hat{f}$ we have that for $\rho < \frac{\delta}{2}$, $\hat{f}_\rho \in C^\infty(S_{\frac{\delta}{2}})$ and $|\nabla \hat f_\rho| \leq \frac{L}{1-\delta}$ on $S_{\frac{\delta}{2}}$ as well as $\hat f_\rho \to \hat f = f$ uniformly on $\mathbb S^{n-1}$ as $\rho \to 0$.
Let $\omega_1, \omega_2 \in \mathbb S^{n-1}$ and $\eta \colon[0,1] \rightarrow \mathbb S^{n-1}$
be a curve with $\eta(0)=\omega_1, \eta(1)=\omega_2$ and $\int_0^1 | \eta'(t) | \, \dee t = d(\omega_1,\omega_2)$, where $d(\cdot,\cdot)$ is again the spherical metric on $\mathbb S^{n-1}$.
Then
\begin{displaymath}
\left| \hat f_\rho (\omega_2) - \hat f_\rho (\omega_1) \right| = \left| \int_0^1 \frac{d}{dt} (\hat f_\rho \circ \eta)(t) \dee t \right| = \left| \int_0^1 \nabla \hat f_\rho (\eta(t)) \cdot \eta'(t) \dee t \right|
\leq \frac{L d(\omega_1,\omega_2)}{1-\delta}.
\end{displaymath}
By first letting $\rho \to 0$ and afterwards $\delta \to 0$ we deduce \eqref{eq:LipschitzBoundProof}, observing that $d(\omega_1,\omega_2) \leq \frac{\pi}{2}|\omega_1-\omega_2|$.\\[1.2mm]
(ii)
% Clearly, $f$ is the radial function for $\Omega_f$.
%We are now prepared to show that $\Omega_f$ is star-shaped with respect to $B_\epsilon(0)$.
Let $x \in \Omega_f \setminus \{0\}$, $y \in B_\epsilon(0)$ and $z = t x + (1-t)y$ for some $t \in (0,1)$.
We abbreviate $\tilde x = tx$ and have
\[
    |\omega_{\tilde {x} } - \omega_z|
    =
    \left| \frac{ \tilde{x}}{|\tilde{x}|} - \frac{z}{|z|}\right|
    \leq
    \frac{1}{|\tilde x|} |\tilde x - z| + |z| \left| \frac{1}{|z|} - \frac{1}{|\tilde x|}\right|
    \leq \frac{2}{|\tilde x| }|\tilde x - z|,
\]
which combined with \eqref{eq:LipschitzBoundProof} yields
\begin{displaymath}
    f(\omega_{\tilde {x}})
    \leq
    |f(\omega_{\tilde {x}}) - f(\omega_z)| + f(\omega_{z})
    \leq
    L \frac{\pi}{2}|\omega_{\tilde{x}} - \omega_z| + f(\omega_z)
    \leq
    \frac{L\pi}{|\tilde{x}|}|\tilde{x} - z| + f(\omega_z).
\end{displaymath}
Since $\omega_{\tilde{x}} = \omega_x$ and $\frac{|\tilde{x}|}{f(\omega_{\tilde{x}})} = \frac{t|x|}{f(\omega_x)} < t$ we obtain
\begin{align*}
    |z|
    \leq&
    |z-\tilde{x}| + |\tilde{x}|
    =
    |z-\tilde{x}| + \frac{|\tilde{x}|}{f(\omega_{\tilde{x}} )} f(\omega_{\tilde{x}} )
    <
    |z-\tilde{x}| + \frac{L \pi}{f(\omega_{\tilde{x}})} |\tilde x - z| + t f(\omega_z)
    \\
    \leq&
    (1-t)|y| \left( 1 + \frac{L \pi}{f_0} \right) + t f(\omega_z)
    \leq
    (1-t) \epsilon \frac{f_0 + L\pi}{f_0} + t f(\omega_z)
    \\
    \leq&
    (1-t)f_0 + t f(\omega_z)
    \leq f(\omega_z),
\end{align*}
therefore $z \in \Omega_f$ by the choice of $\epsilon$. \\[1.2mm]
(iii) Since $f(\omega) \geq f_0$ for all $\omega \in \mathbb S^{n-1}$ it is straightforward to verify that $\Phi_f$ is bi--Lipschitz from $B$ to $\Omega_f$. 
Furthermore, for $x \neq 0$
\begin{displaymath}
D \Phi_f(x) = f (\omega_x) I + \omega_x  \otimes P(x) \nabla_{T} f (\omega_x),
\end{displaymath}
where $P(x):=I- \omega_x \otimes \omega_x$ is the projection onto the tangent space $T_{\omega_x}\mathbb{S}^{n-1}$.
Observing that $\nabla_{T} f (\omega_x) \cdot x=0$ by \eqref{nablat} gives that $P(x) \nabla_T f(\omega_x) = \nabla_T f(\omega_x)$ to conclude the form of $D\Phi_f$.
Using that $\omega_x \otimes \nabla_T f(\omega_x)$ is a rank 1 term with vanishing trace, we deduce \eqref{phideriv}.
\end{proof}
\begin{remark}
Lemma 2 of \cite[Chapter 3]{Bur98} shows that one may take $\epsilon = \frac{2}{\pi}\frac{f_0^2}{\sqrt{L^2 + f_0^2}}$.
This value is different to that which we have considered in (ii).
\end{remark}

\noindent
Using \eqref{phideriv} together with a change of variables we infer that
\begin{equation} \label{volomega}
| \Omega_f | = \int_B | \mbox{det} D \Phi_f(x) | \, \dee x = \int_0^1 \int_{\mathbb S^{n-1}} f(\omega)^n \dee o_\omega r^{n-1} \dee r = \frac{1}{n} \int_{\mathbb S^{n-1}} f(\omega)^n \dee o_\omega,
\end{equation}
where we write $\dee o_\omega$ to be the surface element on $\mathbb{S}^{n-1}$.
Let us fix $\rho>0, L>0$ and $\gamma>0$ with $\gamma >  \rho^n | \mathbb S^{n-1} |$.  We define
\begin{multline} \label{defF}
\mathcal F:= \lbrace f \in W^{1,\infty}(\mathbb S^{n-1})  \, | \, f \geq \rho \mbox{ in } \mathbb S^{n-1}, \\ \Vert \nabla_T f \Vert_{L^\infty(\mathbb{S}^{n-1})} \leq L, \int_{\mathbb S^{n-1}} f(\omega)^n
\dee o_\omega = \gamma \rbrace.
\end{multline}
Note that if $f \in \mathcal F$, then there exists $\bar \omega \in \mathbb S^{n-1}$ such
that $f(\bar \omega)^n | \mathbb S^{n-1} | = \gamma$. By using the Lipschitz bound \eqref{eq:LipschitzBoundProof} we obtain for every $\omega \in \mathbb S^{n-1}$ that
\begin{equation}\label{eq:DefnOfR}
f(\omega)  \leq  f(\bar \omega) + L d(\omega, \bar \omega) \leq \bigl( | \mathbb S^{n-1} |^{-1} \gamma \bigr)^{\frac{1}{n}} + \pi L=:R,
\end{equation}
so that all sets $\Omega_f$ given by \eqref{omegaf} are contained in the hold--all domain $D:=B_R(0)$. Given $F,z \in L^2(D)$, we now define
\begin{displaymath}
J\colon \mathcal F \rightarrow \mathbb R, \,  J(f):= \mathcal J(\Omega_f)= \frac{1}{2} \int_{\Omega_f} |u - z|^2 \dee x,
\end{displaymath}
where $u \in H^1_0(\Omega_f)$ solves
\begin{equation} \label{statef}
\int_{\Omega_f} \nabla u \cdot \nabla \eta \, \dee x = \int_{\Omega_f} F \,  \eta \, \dee x \qquad \forall \eta \in H^1_0(\Omega_f).
\end{equation}
Hence we consider the optimisation problem \eqref{model}, \eqref{state} in the class $\mathcal S = \lbrace \Omega_f \, | \, f \in \mathcal F \rbrace$. In view of 
Lemma \ref{starshaped} and \eqref{volomega} the class of admissible domains consists of bounded domains of fixed volume, which contain $B_\rho(0)$ and which are star--shaped with respect to 
$B_\epsilon(0)$, where $\epsilon=\frac{\rho^2}{{L\pi + \rho}}$. 
Let us next establish the existence of a solution of the resulting optimisation problem. 

\begin{theorem}\label{thm:ExistsConditionalMinimiser}
There exists $f_* \in \mathcal F$ such that $J(f_*)=\min_{f \in \mathcal F} J(f)$.
\end{theorem}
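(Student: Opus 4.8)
The plan is to apply the direct method of the calculus of variations. First I would take a minimising sequence $(f_k) \subset \mathcal F$, so that $J(f_k) \to \inf_{\mathcal F} J =: m \ge 0$. By the definition of $\mathcal F$ in \eqref{defF} together with the bound \eqref{eq:DefnOfR}, the $f_k$ satisfy $\rho \le f_k \le R$ and $\|\nabla_T f_k\|_{L^\infty(\mathbb S^{n-1})} \le L$; in particular, by Lemma \ref{starshaped}(i) they are uniformly bounded and equi-Lipschitz on the compact manifold $\mathbb S^{n-1}$. Arzel\`a--Ascoli then yields a subsequence (not relabelled) converging uniformly to some $f_* \in C^{0,1}(\mathbb S^{n-1})$, while weak-$*$ compactness in $W^{1,\infty}$ and weak-$*$ lower semicontinuity of the $L^\infty$-norm give $\nabla_T f_k \rightharpoonup^* \nabla_T f_*$ and $\|\nabla_T f_*\|_{L^\infty} \le L$. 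Uniform convergence immediately gives $f_* \ge \rho$ and, since $f_k^n \to f_*^n$ uniformly, $\int_{\mathbb S^{n-1}} f_*^n \, \dee o_\omega = \gamma$. Hence $f_* \in \mathcal F$, and it remains to prove $J(f_*) \le m$.

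The obstacle is that the states $u_k := u_{\Omega_{f_k}}$ solving \eqref{statef} live on the \emph{varying} domains $\Omega_{f_k}$. The star-shaped structure makes the domain convergence transparent: setting $\epsilon_k := \|f_k - f_*\|_{L^\infty} \to 0$ and using the monotonicity $f \le \tilde f \Rightarrow \Omega_f \subseteq \Omega_{\tilde f}$ of the assignment \eqref{omegaf}, one obtains for $k$ large the sandwich $\Omega_{f_* - \epsilon_k} \subseteq \Omega_{f_k} \subseteq \Omega_{f_* + \epsilon_k}$. Since $\partial \Omega_{f_*}$ is a Lipschitz hypersurface (Lemma \ref{starshaped}) and hence Lebesgue-null, the two outer domains shrink to $\Omega_{f_*}$, so $\chi_{\Omega_{f_k}} \to \chi_{\Omega_{f_*}}$ almost everywhere and in $L^1(D)$; moreover all $\Omega_{f_k}$ are star-shaped with respect to the fixed ball $B_\epsilon(0)$, $\epsilon = \rho^2/(L\pi+\rho)$, hence satisfy a uniform cone condition.

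The heart of the proof is the convergence of the states, which I expect to be the main obstacle. Extending each $u_k$ by zero to $D$, testing \eqref{statef} with $u_k$, and using a uniform Poincar\'e inequality on $\Omega_{f_k} \subseteq D$, I get a uniform bound $\|u_k\|_{H^1_0(D)} \le C$; thus, along a further subsequence, $u_k \rightharpoonup w$ in $H^1_0(D)$ and $u_k \to w$ in $L^2(D)$. Two points must then be established. (a) \emph{No mass leaks}: since $u_k = 0$ outside $\Omega_{f_k} \subseteq \Omega_{f_* + \epsilon_k}$, one gets $w = 0$ a.e. on $D \setminus \overline{\Omega_{f_*}}$, and the Lipschitz regularity of $\Omega_{f_*}$ identifies $\{v \in H^1_0(D) : v = 0 \text{ a.e. outside } \Omega_{f_*}\}$ with $H^1_0(\Omega_{f_*})$, whence $w \in H^1_0(\Omega_{f_*})$. (b) \emph{Identification}: for $\phi \in C_c^\infty(\Omega_{f_*})$ a compactness argument shows $\operatorname{supp}\phi \subseteq \Omega_{f_* - \epsilon_k} \subseteq \Omega_{f_k}$ for large $k$, so $\phi$ is admissible in \eqref{statef} and the right-hand side $\int F \phi$ is eventually constant; passing to the limit gives $\int_D \nabla w \cdot \nabla \phi = \int_{\Omega_{f_*}} F \phi$, and by density $w = u_{\Omega_{f_*}} =: u_*$. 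Together, (a)--(b) amount to the Mosco convergence $H^1_0(\Omega_{f_k}) \to H^1_0(\Omega_{f_*})$, and both hinge crucially on the sandwich inclusion and on $\Omega_{f_*}$ being Lipschitz.

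Finally I would pass to the limit in the functional. Writing $J(f_k) = \tfrac12 \int_D |u_k - z \chi_{\Omega_{f_k}}|^2 \, \dee x$, the strong $L^2(D)$ convergence $u_k \to u_*$ together with $z\chi_{\Omega_{f_k}} \to z \chi_{\Omega_{f_*}}$ in $L^2(D)$ (dominated convergence, since $\chi_{\Omega_{f_k}} \to \chi_{\Omega_{f_*}}$ a.e. and $|z\chi_{\Omega_{f_k}}| \le |z| \in L^2$) yields $J(f_k) \to \tfrac12 \int_{\Omega_{f_*}} |u_* - z|^2 \, \dee x = J(f_*)$. Hence $J(f_*) = m = \inf_{\mathcal F} J$, so the infimum is attained. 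I note in passing that the tempting alternative of pulling everything back to the fixed ball $B$ via $\Phi_{f_k}$ (Lemma \ref{starshaped}(iii)) is less convenient here: the transformed diffusion matrix $f_k^{n-2} I - f_k^{n-3}(\omega \otimes \nabla_T f_k + \nabla_T f_k \otimes \omega) + f_k^{n-4} |\nabla_T f_k|^2\, \omega \otimes \omega$ depends quadratically on $\nabla_T f_k$, which converges only weakly-$*$, so the coefficients do not pass to the limit and the limit state cannot be identified directly — this is precisely why I route the argument through domain convergence instead.
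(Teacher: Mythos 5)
Your proof is correct, and while the opening compactness step (Arzel\`a--Ascoli plus weak-$*$ convergence of the gradients, giving $f_* \in \mathcal F$) coincides verbatim with the paper's, you handle the crux --- convergence of the states on varying domains --- by a genuinely different route. The paper establishes convergence $\Omega_k \to \Omega_*$ in the Hausdorff complementary metric via the explicit bound $\max_{x \in \bar D}|d_{\complement\Omega_k}(x) - d_{\complement\Omega_*}(x)| \leq \frac{R}{\rho}\Vert f_k - f_*\Vert_{L^\infty(\mathbb S^{n-1})}$ (obtained by radially dilating points of $\complement\Omega_*$, very much in the spirit of your sandwich), then cites \cite[Lemma 3, Section 3.2]{Bur98} for the cone condition of $\Omega_*$ and invokes Theorem 4.1 of Chapter 8 of \cite{DelZol11} to conclude $u_{\Omega_k} \to u_{\Omega_*}$ in $H^1_0(D)$. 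You instead prove the required Mosco convergence by hand, exploiting the monotonicity $f \leq \tilde f \Rightarrow \Omega_f \subseteq \Omega_{\tilde f}$ of the radial parametrisation: the sandwich $\Omega_{f_*-\epsilon_k} \subseteq \Omega_{f_k} \subseteq \Omega_{f_*+\epsilon_k}$, the Lebesgue-null boundary, the uniform energy bound, and your steps (a)--(b). What each buys: your argument is self-contained and elementary, avoiding the Delfour--Zol\'esio machinery entirely (and correctly observing that only strong $L^2$ convergence of the states is needed, since $J$ sees only the $L^2$ norm, whereas the cited theorem delivers the stronger $H^1_0(D)$ convergence); the paper's argument is shorter by citation and its Hausdorff-complementary estimate slots into a general stability theory not tied to star-shaped geometry. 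Two small points in your write-up deserve attention: first, the Lipschitz regularity of $\partial\Omega_{f_*}$, which you attribute to Lemma \ref{starshaped}, is really the content of the Burenkov result the paper cites --- Lemma \ref{starshaped}(ii) gives only star-shapedness with respect to $B_\epsilon(0)$ --- and this regularity is genuinely load-bearing in your step (a), where the identification $\lbrace v \in H^1_0(D) : v = 0 \mbox{ a.e. outside } \Omega_{f_*}\rbrace = H^1_0(\Omega_{f_*})$ fails for rough domains; second, one should note that $f_* - \epsilon_k \geq \rho/2 > 0$ for $k$ large so that the inner domain of the sandwich is well-defined. Your closing remark about the pull-back to $B$ is apt: the paper likewise avoids the pull-back for the existence proof, using it only later for the shape derivative.
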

\begin{proof}
Since $\gamma>\rho^n | \mathbb S^{n-1} |$, the function $f := \bigl( | \mathbb S^{n-1} |^{-1} \gamma \bigr)^{\frac{1}{n}}$ belongs to $\mathcal F$, so that  $\mathcal F$ is non--empty. 
Let $(f_k)_{k \in \mathbb N} \subset \mathcal F$ be a sequence such that $J(f_k) \searrow \inf_{f \in \mathcal F} J(f)$.
By the theorem of Arzel\`a-Ascoli and the fact that bounded sequences in $L^\infty$ contain weak-$*$ convergent subsequences, one has that there exists a subsequence, again denoted by $(f_k)_{k \in \mathbb N}$ and $f_* \in W^{1,\infty}(\mathbb S^{n-1})$ such that
\begin{displaymath}
f_k \rightarrow f_* \mbox{ in } C(\mathbb S^{n-1}) \mbox{ and } \nabla_{T} f_k \overset{*}{\rightharpoonup} \nabla_{T} f_* \mbox{ in } L^{\infty}(\mathbb S^{n-1}).
\end{displaymath}
Clearly $f_* \geq \rho$ in $\mathbb S^{n-1}$ and $\int_{\mathbb{S}^{n-1}} f_*(\omega)^n \dee o_\omega = \gamma$, while
\[
    \|\nabla_T f_*\|_{L^\infty(\mathbb{S}^{n-1})} \leq \liminf_{k \to \infty} \|\nabla_T f_k\|_{L^\infty(\mathbb{S}^{n-1})} \leq L,
\]
therefore $f_* \in \mathcal F$.
%In order to show $f_* \in \mathcal F$, we must show $f_* \geq \rho$ and $\int_{\mathbb{S}^{n-1}} f_*(\omega)^n \dee o_\omega = \gamma$, which follows from the convergence in $C(\mathbb{S}^{n-1})$ and $\|\nabla_T f_*\|_{L^\infty(\mathbb{S}^{n-1})} \leq L$, which follows from the weak-$*$-lower-semi-continuity of the semi-norm on $W^{1,\infty}(\mathbb{S}^{n-1})$.
Let us write $\Omega_k=\Omega_{f_k}$ and $\Omega_*=\Omega_{f_*}$. We claim that $\Omega_k \rightarrow \Omega_*$ in the Hausdorff complementary
metric, i.e. $d_{\complement \Omega_k} \rightarrow d_{\complement \Omega_*}$ in $C(\bar D)$, where $d_A$ denotes the distance function to the set $A$ and $\complement A$ is the complement of a set $A$.
In order to prove
the claim we fix $x \in \bar D$ and choose $z \in \complement \Omega_*$ such that $d_{\complement \Omega_*}(x)= |x -z|$. Then $R \geq |z| \geq f_*(\omega_z) \geq \rho$. For 
$z_k=\bigl( 1+ \rho^{-1} \Vert f_k - f_* \Vert_{L^\infty(\mathbb{S}^{n-1})} \bigr) z$
we have $\omega_{z_k}=\omega_z$ and
\begin{displaymath}
| z_k| = |z| + \frac{|z|}{\rho} \Vert f_k - f_* \Vert_{L^\infty(\mathbb{S}^{n-1})} \geq f_*(\omega_z) + \Vert f_k - f_* \Vert_{L^\infty(\mathbb{S}^{n-1})} \geq f_k(\omega_z) = f_k(\omega_{z_k}).
\end{displaymath}
Therefore, $z_k \in \complement \Omega_k$ so that 
\begin{multline*}
d_{\complement \Omega_k}(x) - d_{\complement \Omega_*}(x) \leq | x - z_k | - |x -z | \leq |z_k - z| = \\ = \frac{|z|}{\rho} \Vert f_k - f_* \Vert_{L^\infty(\mathbb{S}^{n-1})} \leq \frac{R}{\rho} 
\Vert f_k - f_* \Vert_{L^\infty(\mathbb{S}^{n-1})},
\end{multline*}
where $R$ defined in \eqref{eq:DefnOfR} is the radius of the hold-all domain $D= B_R(0)$.
By exchanging the roles of $f_k$ and $f_*$ and taking the maximum with respect to $x$ we obtain 
\begin{displaymath}
\max_{x \in \bar D} | d_{\complement \Omega_k}(x) - d_{\complement \Omega_*}(x) |  \leq \frac{R}{\rho} 
\Vert f_k - f_* \Vert_{L^\infty(\mathbb{S}^{n-1})} \rightarrow 0, \, k \rightarrow \infty,
\end{displaymath}
which shows that $\Omega_k \to \Omega_*$ in the Hausdorff complementary metric.
Furthermore, according to \cite[Lemma 3, Section 3.2]{Bur98} the set $\Omega_*$ satisfies the cone condition and hence is locally Lipschitz. We may therefore deduce
from Theorem 4.1 in Chapter 8 of \cite{DelZol11} that $u_{\Omega_k} \rightarrow u_{\Omega_*}$ in $H^1_0(D)$. As a result
$J(f_*)= \lim_{k \rightarrow \infty} J(f_k) = \inf_{f \in \mathcal F} J(f)$ which completes the proof.
\end{proof}

\subsection{Calculating the shape derivative}
We now fix
\begin{equation}\label{eq:assumptionsOnf}
f \in W^{1,\infty}(\mathbb S^{n-1}) \mbox{ with } \min_{\omega \in \mathbb S^{n-1}} f(\omega)>0.
\end{equation}
In addition, fix $F \in L^2_{loc}(\R^{n})$, $z \in H^1_{loc}(\R^n)$.
These are defined on all of $\R^n$, rather than on a hold-all domain whose size depends on $\|\nabla_T f\|_{L^\infty(\mathbb S^{n-1})}$ and $\min_{\omega \in \mathbb S^{n-1}} f(\omega)$.
%avoiding overexplaining
%Because we now consider an arbitrary $f$, we do not have a prescribed upper bound on the required hold-all domain, as such it is more convenient to consider the functions $F$ and $z$ as functions on the whole of $\R^n$.
%We therefore fix $F \in L^2_{loc}(\R^{n})$ and $z \in H^1_{loc}(\R^n)$.
Before we calculate a formula for the directional derivative of $J$ at $f$ we transform the state equation to the reference domain $B$.
To do so, define $\hat u(x):= u(\Phi_f(x))$, where $u \in H^1_0(\Omega_f)$ denotes the solution of \eqref{statef} and $\Phi_f$ is given by
\eqref{defPhi}. Clearly, $\nabla u(\Phi_f(x)) = D \Phi_f(x)^{-t} \,  \nabla \hat u(x)$, where 
we think of the gradient as a column vector. Therefore, \eqref{statef} translates into
\begin{equation} \label{stateref}
\int_B A_f(\omega_x)   \nabla \hat u(x) \cdot \nabla \hat \eta(x) \, \dee x = \int_B \hat F_f(x) \hat \eta(x) \, f(\omega_x)^n \, \dee x \quad \forall \,  \hat \eta \in H^1_0(B).
\end{equation}
In the above $\hat F_f(x)=F(\Phi_f(x))$ and $A_f(\omega_x)= f(\omega_x)^n \, D \Phi_f(x)^{-1} D \Phi_f(x)^{-t}$. Using the fact that
\begin{displaymath}
D \Phi_f(x)^{-1} = \frac{1}{f(\omega_x)}  \Bigl( I - \omega_x \otimes \frac{\nabla_T f(\omega_x)}{ f( \omega_x)} \Bigr)
\end{displaymath}
we find that
\begin{equation} \label{af}
A_f(\omega_x) = f(\omega_x)^{n-2} \Bigl( I - \omega_x \otimes \frac{\nabla_T f( \omega_x)}{ f(\omega_x)}
- \frac{\nabla_T f (\omega_x)}{ f(\omega_x)} \otimes  \omega_x + \frac{| \nabla_T f(\omega_x) |^2}{f(\omega_x)^2}
\, \omega_x \otimes \omega_x \Bigr).
\end{equation}

\noindent
We wish to show that $J$ has a Gateaux derivative at $f$ in a direction $g \in W^{1,\infty}(\mathbb S^{n-1})$.
We define the vector--field
$V \in C^{0,1}(\mathbb R^n ;\mathbb R^n)$ by 
\begin{equation} \label{defV}
V(y)= \left\{
\begin{array}{cl}
\displaystyle \frac{g(\omega_y)}{f(\omega_y)} y, & y \neq 0, \\[2mm]
0, & y=0.
\end{array}
\right.
\end{equation}
Then, $(\mbox{id}+t V)(\Omega_f)= (\mbox{id}+ t V) \circ \Phi_f (B)$. For every $x \in B$ we have
\begin{displaymath}
\Phi_f(x)+t V(\Phi_f(x)) = \left\{
\begin{array}{cl}
\displaystyle (f(\omega_x)+t g(\omega_x)) x, & x \neq 0, \\[2mm]
0, & x=0,
\end{array}
\right.
\end{displaymath}
so that $(\mbox{id}+t V)(\Omega_f)= \Omega_{f+tg}$. \\[2mm]
We therefore have for $t \neq 0$ that
\begin{equation}\label{jstrich}
\frac{J(f+tg)-J(f)}{t} = \frac{\mathcal J((\mbox{id}+tV)(\Omega_f)) - \mathcal J(\Omega_f)}{t}.
\end{equation}
Since $\mathcal J$ is shape differentiable \cite[Proposition 4.4]{ADJ21}, the right hand side of the above converges to $\mathcal{J}'(\Omega_f)(V)$ as $t \to 0$ with $J'(\Omega_f)(V)$ representing a linear mapping, see \eqref{eq:ShapeDerivativeGeneralBulkForm} and \eqref{eq:BoundaryDerivativeGeneralCase}.
Therefore we see $J$ is Gateaux differentiable.

By  adapting the proof of \cite[Proposition 4.5]{ADJ21} to our situation
we obtain the {\it volume form} of the shape derivative as
\begin{eqnarray}
%\langle J' (f),g\rangle =
\mathcal J'(\Omega_f)(V) & = &    \int_{\Omega_f} \bigl( DV+ DV^t - \mbox{div}V \, I  \bigr) \nabla u \cdot \nabla p \, \dee x   \nonumber  \\
&&  +  \int_{\Omega_f} \bigl( \frac{1}{2} (u-z)^2 \, \mbox{div} V -(u-z) \nabla z \cdot V \bigr) \, \dee x - \int_{\Omega_f} 
  F V \cdot \nabla p  \, \dee x. \label{eq:ShapeDerivativeGeneralBulkForm} 
\end{eqnarray}
Here $p \in H^1_0(\Omega_f)$ is  the solution of
the adjoint problem
\begin{equation} \label{adjoint}
\int_{\Omega_f} \nabla p \cdot \nabla \eta \, \dee x = \int_{\Omega_f} (u-z) \eta \, \dee x \qquad \forall \eta \in H^1_0(\Omega_f).
\end{equation}
% From the form of $\mathcal J'(\Omega_f)(V)$ one may see that $\mathcal{J}$ is Gateaux differentiable, closer inspection reveals that it may also be seen to be Fr\'echet differentiable under certain circumstances.
% We observe that, from the definition of $V$ in \eqref{defV},

% from which it follows that $J$ is itself Gateaux differentiable.

If in addition, $u,\,p \in H^2(\Omega_f)$, the shape derivative can be written in the well-known boundary form
% \noindent
% The {\it boundary form} of the shape derivative -- If in addition,under appropriate regularity assumptions on the state and adjoint equations, $u,p \in H^2(\Omega_f)$ \cite[Theorem 4.6]{ADJ21} %e.g. $f \in C^2(\mathbb{S}^{n-1})$ is sufficient
% -- can be written down in the form
\begin{equation}\label{eq:BoundaryDerivativeGeneralCase}
    \mathcal J '(\Omega_f)(V) = \int_{\partial \Omega_f} \left( \frac{1}{2} (u-z)^2 + \frac{\partial u}{\partial \nu} \frac{\partial p}{\partial \nu} \right) V\cdot \nu \, \dee S,
\end{equation}
where $\nu$ is almost everywhere the outward unit normal to $\Omega_f$ and $\dee S$ denotes the surface element on $\partial \Omega_f$, see \cite[Theorem 4.6]{ADJ21}.
%, compare \cite[Theorem 4.6]{ADJ21}.
% We note that this boundary form of the shape derivative need not be valid for the cases we consider, it is included because it is the preferred form for many of the {\color{red} industrial applications.}
\subsubsection{ Mapping the volume form  \eqref{eq:ShapeDerivativeGeneralBulkForm}  to the reference domain $B$.} By changing variable in \eqref{eq:ShapeDerivativeGeneralBulkForm} we have
\begin{eqnarray}
\lefteqn{ \langle J'(f),g \rangle  =  \int_B (D \Phi_f)^{-1} \bigl( DV+ DV^t - (\mbox{div}V) \, I  \bigr) \circ \Phi_f (D \Phi_f)^{-t} \nabla \hat u \cdot \nabla \hat p \;
f(\omega_x)^n \, \dee x }  \nonumber  \\
& & + \int_B \bigl( \frac{1}{2} (\hat u - \hat z_f)^2 \, (\mbox{div}V) \circ \Phi_f - (\hat u - \hat z_f) \nabla \hat z_f \cdot (D \Phi_f)^{-1} V \circ \Phi_f \bigr) f(\omega_x)^n \, \dee x
\nonumber \\
  & &  - \int_B \hat F_f (D \Phi_f)^{-1} V \circ \Phi_f \cdot \nabla \hat p \,  f(\omega_x)^n \, \dee x, \label{shapederivref}
\end{eqnarray}
where $\hat z_f(x)=z(\Phi_f(x))$ and we have used \eqref{jstrich}.
In the same way as above we obtain from \eqref{adjoint} that $\hat p(x)=p(\Phi_f(x))$ satisfies
\begin{equation} \label{adjointref}
\int_B A_f(\omega_x)   \nabla \hat p(x) \cdot \nabla \hat \eta(x) \, \dee x = \int_B (\hat u(x) - \hat z_f(x))  \hat \eta(x) \, f(\omega_x)^n \, \dee x \; \;  \forall \, \hat \eta \in H^1_0(B).
\end{equation}
Differentiating the relation $V(\Phi_f(x))= g(\omega_x)x$ 
we obtain
\[
    DV(\Phi_f(x)) D \Phi_f(x)= g(\omega_x)I + \omega_x \otimes \nabla_T g(\omega_x)
\]
and hence
\begin{displaymath}
DV \circ  \Phi_f  =  \bigl( g I + \omega_x \otimes \nabla_T g \bigr) (D \Phi_f)^{-1}  =  \frac{1}{f} \bigl( g I - \frac{g}{f} \, \omega_x \otimes \nabla_T f + \omega_x \otimes \nabla_T g \bigr).
\end{displaymath}
In particular we deduce that
\begin{displaymath}
({\Div} V) \circ \Phi_f = \mbox{trace}\, DV \circ \Phi_f = n \frac{g}{f}
\end{displaymath}
as well as
\begin{eqnarray*}
\lefteqn{ DV \circ \Phi_f + DV^t \circ \Phi_f - {\Div} V \circ \Phi_f \, I} \\
& = & \frac{g}{f} (2-n) I - \frac{g}{f^2} \omega_x \otimes \nabla_{T} f - \frac{g}{f^2} \nabla_T f \otimes \omega_x + \frac{1}{f} \omega_x \otimes \nabla_T g
+ \frac{1}{f} \nabla_T g \otimes \omega_x.
\end{eqnarray*}
A long, but straightforward calculation then shows that
\begin{eqnarray*}
\lefteqn{ (D \Phi_f)^{-1} \bigl(  DV \circ \Phi_f + DV^t \circ \Phi_f - \mbox{div} V \circ \Phi_f \, I \bigr) (D \Phi_f)^{-t} } \\
& = & \frac{g}{f^3} (2-n) I + (n-3) \frac{g}{f^4} \bigl( \omega_x \otimes \nabla_T f + \nabla_T f \otimes \omega_x \bigr) +
\frac{1}{f^3} \bigl( \omega_x \otimes \nabla_T g + \nabla_T g \otimes \omega_x \bigr) \\
& & + \bigl( (4-n) \frac{g}{f^5} | \nabla_T f |^2 -2 \frac{1}{f^4} \bigl( \nabla_T f \cdot \nabla_T g \bigr) \bigr) \omega_x \otimes \omega_x.
\end{eqnarray*}
Note also that
\begin{displaymath}
(D \Phi_f)^{-1} V \circ \Phi_f = \frac{1}{f} \bigl( I - \omega_x \otimes \frac{\nabla_T f}{f} \bigr) g x = \frac{g}{f} x = |x| \frac{g}{f} \omega_x.
\end{displaymath}
If we insert the above identities into \eqref{shapederivref} and transform to polar coordinates we obtain 
\begin{equation} \label{shaped}
\langle J'(f),g \rangle = \int_B \bigl( h_f g + H_f \cdot \nabla_T g) \dee x =   \int_{\mathbb S^{n-1}} \bigl( \tilde h_f g + \tilde H_f \cdot \nabla_T g \bigr) do_\omega,
\end{equation} 
where $h_f\colon  B \rightarrow \mathbb R$ and $H_f\colon B  \rightarrow \mathbb R^n$ are defined by
\begin{eqnarray}
\; \; \; h_f & = &  (2-n) f^{n-3} \nabla \hat u \cdot \nabla \hat p  + (4-n) f^{n-5} | \nabla_T f|^2 (\nabla \hat u \cdot \omega_x) (\nabla \hat p \cdot \omega_x)  \label{hf} \\
& & + (n-3) f^{n-4}  \bigl( (\nabla_T f \cdot \nabla \hat u ) 
 (\omega_x \cdot \nabla \hat p) + (\nabla_T f \cdot \nabla \hat p)
( \omega_x \cdot \nabla \hat u) \bigr) \nonumber \\
& &  + f^{n-1}  \bigl( \frac{n}{2} (\hat u - \hat z_f)^2 - |x|  (\hat u - \hat z_f) \nabla \hat z_f \cdot \omega_x - |x|  \hat F_f \, \nabla \hat p
\cdot \omega_x \bigr); \nonumber \\
\; \; \; H_f & = & f^{n-3} \bigl( (\nabla \hat p \cdot \omega_x) \nabla \hat u + (\nabla \hat u  \cdot \omega_x) \nabla \hat p \bigr)   
 - 2 f^{n-4}  (\nabla \hat u \cdot \omega_x) (\nabla \hat p \cdot \omega_x) \,   \nabla_T f, \label{Hf} 
\end{eqnarray}
while $\tilde h_f\colon \mathbb S^{n-1} \rightarrow \mathbb R, \, \tilde H_f\colon \mathbb S^{n-1} \rightarrow \mathbb R^n$ are given by
\begin{displaymath}
\tilde h_f(\omega)= \int_0^1 s^{n-1} h_f(s \omega) \dee s, \; \tilde H_f(\omega) = \int_0^1 s^{n-1} H_f(s \omega) \dee s.
\end{displaymath}
From our assumptions on $z$ and $f$ we deduce that $\tilde h_f \in L^1(\mathbb S^{n-1})$, $\tilde H_f \in L^1(\mathbb S^{n-1};\mathbb R^n)$. \\

\noindent
\subsubsection{ Mapping the boundary form of the shape derivative to $\mathbb{S}^{n-1}$.}
As we intend to use  formula \eqref{eq:BoundaryDerivativeGeneralCase} also for numerical purposes we
transform it to an integral over the reference boundary $\mathbb S^{n-1}$ with the help of the mapping 
\begin{displaymath}
\Phi_{f| \mathbb S^{n-1}}\colon \mathbb S^{n-1} \rightarrow \partial \Omega_f, \; \omega \mapsto f(\omega) \omega.
\end{displaymath}
A calculation of $\dee S$, the surface element on $\partial \Omega_f$, shows
\begin{equation} \label{transdet}
\dee S = f(\omega)^{n-1} \left(1 + \frac{| \nabla_{T} f(\omega) |^2}{f(\omega)^2}\right)^{1/2} \dee o_\omega,
\end{equation}
while
\begin{displaymath}
(\nu \circ \Phi_f)(\omega)= \frac{(D \Phi_f(\omega))^{-t} \omega}{| (D \Phi_f(\omega))^{-t} \omega |} = \left(1+  \frac{| \nabla_{T} f(\omega) |^2}{f(\omega)^2}\right)^{-\frac{1}{2}}
\bigl( \omega - \frac{\nabla_T f(\omega)}{f(\omega)} \bigr).
\end{displaymath}
Since $(\nabla u \circ \Phi_f)(\omega) = (D \Phi_f(\omega))^{-t} \nabla \hat u(\omega)$ we deduce that
\begin{eqnarray*}
\frac{\partial u}{\partial \nu} \circ \Phi_f & = & \left(1+  \frac{| \nabla_{T} f |^2}{f^2}\right)^{-\frac{1}{2}} (D \Phi_f)^{-t} \nabla \hat u \cdot \bigl( \omega - \frac{\nabla_T f}{f} \bigr) \\
& = & \frac{1}{f} \left(1+  \frac{| \nabla_{T} f |^2}{f^2}\right)^{-\frac{1}{2}}
\bigl( I - \frac{\nabla_T f}{f} \otimes \omega \bigr) \nabla \hat u \cdot 
\bigl( \omega - \frac{\nabla_T f}{f} \bigr) \\
& = & \frac{1}{f} \left(1+  \frac{| \nabla_{T} f |^2}{f^2}\right)^{-\frac{1}{2}} (\omega - \frac{\nabla_T f}{f}) \cdot
( \nabla \hat{u} -  \frac{\nabla_T f}{f}\frac{\partial\hat{u}}{\partial \omega})\\
& = & \frac{1}{f} \left(1+  \frac{| \nabla_{T} f |^2}{f^2}\right)^{\frac{1}{2}} \frac{\partial \hat u}{\partial \omega},
\end{eqnarray*}
where we have used that $\nabla \hat{u} \cdot \nabla_T f = 0$ on $\partial B$ since $\hat{u} =0 $ on $\partial B$.
For the function $V$ given by \eqref{defV} we have $(V \circ \Phi_f)(\omega)=g(\omega) \omega$ and hence by \eqref{nablat}
\begin{displaymath}
(V \cdot \nu) \circ \Phi_f = \left(1+  \frac{| \nabla_{T} f |^2}{f^2}\right)^{-\frac{1}{2}} \, g.
\end{displaymath}
After a change of variables in \eqref{eq:BoundaryDerivativeGeneralCase}, using \eqref{transdet} as well as the formulae above we find
\begin{align*}
\langle J'(f),g \rangle  &= \int_{\mathbb S^{n-1}} \left( \frac{1}{2} (\hat u - \hat z_f)^2 + (\frac{\partial u}{\partial \nu}  \frac{\partial p}{\partial \nu})
\circ \Phi_f \right) (V \cdot \nu) \circ \Phi_f  f^{n-1} \left(1 + \frac{| \nabla_{T} f |^2}{f^2}\right)^{\frac{1}{2}} \dee o_\omega  \\
& =  \int_{\mathbb S^{n-1}} \boundaryFormOfDerivative g \dee o_\omega,
\end{align*}
where $ \boundaryFormOfDerivative \colon \mathbb S^{n-1} \rightarrow \mathbb R$ is given by 
\begin{equation} \label{hfboundary}
\boundaryFormOfDerivative = \frac{1}{2} (\hat u(\omega) - \hat z_f(\omega))^2 f(\omega)^{n-1}  + f(\omega)^{n-3} \bigl( 1 + \frac{| \nabla_T f(\omega)|^2}{f(\omega)^2} \bigr)
\frac{\partial \hat u}{\partial \omega}(\omega) \frac{\partial \hat p}{\partial \omega}(\omega).
\end{equation}

\subsubsection{ A descent direction in the  $W^{1,\infty}$-topology.}
We wish to consider perturbations which preserve the volume constraint $ \int_{\mathbb S^{n-1}} f(\omega)^n \dee o_\omega= \gamma$ to first order; we therefore introduce the following set of admissible perturbations
\begin{eqnarray*}
        V_\infty(f)& := & \left\{ v \in W^{1,\infty}(\mathbb S^{n-1}) \,  : 
       \int_{\mathbb S^{n-1}} f^{n-1} v \, \dee o_\omega =0, \; \|\nabla_T v\|_{L^\infty(\mathbb S^{n-1})} \leq 1 \right\}.
\end{eqnarray*}
%where we have the inequality on the semi-norm to ensure the space is convex.
% When $f$ is away from the boundaries of $\mathcal{F}$, the space $V_\infty(f)$ may be seen as a closed unit ball in the tangent space to $\mathcal{F}$ at the point $f$.

Before we show that there is a minimising direction in $V_\infty(f)$, we have the following Lemma which shows that the Lipschitz semi-norm is equivalent to the Lipschitz norm on $V_\infty(f)$.
\begin{lemma}\label{lem:EquivalentNorms}
    Let $f$ be as in \eqref{eq:assumptionsOnf}.
    Then we have for all $v \in W^{1,\infty}(\mathbb S^{n-1})$ with $\int_{\mathbb S^{n-1}} f^{n-1} v \dee o_\omega = 0$, 
    \[
        \|v \|_{L^\infty( \mathbb{S}^{n-1} )} \leq \pi \|\nabla_T v\|_{L^\infty( \mathbb{S}^{n-1} )} .
    \]
\end{lemma}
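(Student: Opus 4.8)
The plan is to reduce the sup-norm bound to a pointwise Lipschitz estimate along geodesics, after first using the weighted mean-zero constraint to produce a point at which $v$ vanishes. Since $f^{n-1}>0$ on $\mathbb{S}^{n-1}$ and $\int_{\mathbb S^{n-1}} f^{n-1} v \, \dee o_\omega = 0$, the function $v$ cannot be of one strict sign: were $v>0$ (resp. $v<0$) everywhere, the weighted integral would be strictly positive (resp. negative). As $v \in W^{1,\infty}(\mathbb S^{n-1}) \cong C^{0,1}(\mathbb S^{n-1})$ is continuous and $\mathbb{S}^{n-1}$ is connected (for $n \geq 2$), the intermediate value theorem yields a point $\omega_0 \in \mathbb S^{n-1}$ with $v(\omega_0)=0$; in the degenerate case $v \equiv 0$ the claim is trivial, so such an $\omega_0$ exists in all cases.

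Second, I would apply the Lipschitz estimate to $v$ based at $\omega_0$. The derivation of \eqref{eq:LipschitzBoundProof} in Lemma \ref{starshaped}(i) — extend the function to a thin annulus, mollify, integrate $\nabla \hat v_\rho \cdot \eta'$ along a distance-minimising curve, then pass to the limit — uses only the bound on the tangential gradient and not the positivity of the function, so it applies verbatim to $v$ and gives
\[
|v(\omega)| = |v(\omega)-v(\omega_0)| \leq \|\nabla_T v\|_{L^\infty(\mathbb S^{n-1})} \, d(\omega,\omega_0)
\]
for every $\omega \in \mathbb S^{n-1}$. Since the geodesic diameter of the unit sphere is $\pi$, we have $d(\omega,\omega_0) \leq \pi$, whence $|v(\omega)| \leq \pi \|\nabla_T v\|_{L^\infty(\mathbb S^{n-1})}$; taking the supremum over $\omega$ completes the proof.

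The argument is short, and its only genuine content is the existence of the zero $\omega_0$, which I regard as the main (though mild) obstacle: one must justify that the weighted mean-zero condition, together with continuity of $v$ and connectedness of $\mathbb{S}^{n-1}$, forces $v$ to attain the value $0$. Once that point is fixed, the remainder is an immediate consequence of the geodesic Lipschitz bound already established in Lemma \ref{starshaped}(i) and the elementary fact that the sphere has diameter $\pi$. Note that no property of the weight $f^{n-1}$ is used beyond its positivity, so the same conclusion holds for any vanishing weighted average with a strictly positive weight.
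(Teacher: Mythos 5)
Your proof is correct and follows essentially the same route as the paper: both locate a zero $\omega_0$ of $v$ from the weighted mean-zero condition (the paper leaves the intermediate-value argument implicit, which you spell out) and then apply the Lipschitz estimate \eqref{eq:LipschitzBoundProof} based at $\omega_0$. The only cosmetic difference is that you bound the geodesic distance directly by the diameter $\pi$, while the paper passes through $d(\omega,\omega_0)\leq \frac{\pi}{2}|\omega-\omega_0|\leq \pi$; both give the same constant, and your side remark that the derivation of \eqref{eq:LipschitzBoundProof} never uses positivity of the function is a correct and worthwhile justification for applying it to $v$.
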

\begin{proof}
    Since $\int_{\mathbb{S}^{n-1}} f^{n-1}v\dee o_\omega = 0$ there exists  $\omega_v \in \mathbb{S}^{n-1}$ such that $v(\omega_v) = 0$, which combined with \eqref{eq:LipschitzBoundProof} implies that
\begin{displaymath}\begin{split}
\Vert v \Vert_{L^{\infty}(\mathbb S^{n-1})} &\leq \max_{\omega \in \mathbb S^{n-1}} | v(\omega) - v(\omega_v) |  \leq \frac{\pi}{2}  \|\nabla_T v\|_{L^\infty( \mathbb{S}^{n-1} )}  \max_{\omega \in 
\mathbb S^{n-1}} | \omega - \omega_v |
\\&\leq \pi \|\nabla_T v\|_{L^\infty( \mathbb{S}^{n-1} )}  .
\end{split}
\end{displaymath}
\end{proof}

\begin{theorem} \label{optimaldesc}
    Let $f$ be as in \eqref{eq:assumptionsOnf}.
Then there exists $g \in V_\infty(f)$ such that $\langle J'(f),g \rangle=\min_{v \in V_\infty(f)} \langle J'(f),v \rangle$.
\end{theorem}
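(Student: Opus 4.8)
The plan is to apply the direct method of the calculus of variations to the bounded linear functional $v \mapsto \langle J'(f),v\rangle$ over the set $V_\infty(f)$. The starting point is the representation \eqref{shaped}, namely $\langle J'(f),v\rangle = \int_{\mathbb S^{n-1}} \bigl( \tilde h_f v + \tilde H_f \cdot \nabla_T v \bigr)\, \dee o_\omega$, which together with the established integrability $\tilde h_f \in L^1(\mathbb S^{n-1})$ and $\tilde H_f \in L^1(\mathbb S^{n-1};\R^n)$ yields the estimate $|\langle J'(f),v\rangle| \le \|\tilde h_f\|_{L^1(\mathbb S^{n-1})}\|v\|_{L^\infty(\mathbb S^{n-1})} + \|\tilde H_f\|_{L^1(\mathbb S^{n-1})}\|\nabla_T v\|_{L^\infty(\mathbb S^{n-1})}$. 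Since every $v \in V_\infty(f)$ satisfies the mean-value constraint $\int_{\mathbb S^{n-1}} f^{n-1} v\,\dee o_\omega = 0$, Lemma \ref{lem:EquivalentNorms} gives $\|v\|_{L^\infty(\mathbb S^{n-1})} \le \pi \|\nabla_T v\|_{L^\infty(\mathbb S^{n-1})} \le \pi$, so the functional is uniformly bounded on $V_\infty(f)$; in particular $m := \inf_{v \in V_\infty(f)} \langle J'(f),v\rangle$ is finite, and $V_\infty(f) \ni 0$ is non-empty.

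I would then take a minimising sequence $(v_k)_{k\in\mathbb N} \subset V_\infty(f)$ with $\langle J'(f),v_k\rangle \to m$. By the bound just recorded this sequence is uniformly bounded in $W^{1,\infty}(\mathbb S^{n-1})$, so exactly as in the proof of Theorem \ref{thm:ExistsConditionalMinimiser} the Arzel\`a--Ascoli theorem together with weak-$*$ compactness of bounded sequences in $L^\infty$ produces a subsequence, still denoted $(v_k)$, and a limit $g \in W^{1,\infty}(\mathbb S^{n-1})$ with $v_k \to g$ in $C(\mathbb S^{n-1})$ and $\nabla_T v_k \overset{*}{\rightharpoonup} \nabla_T g$ in $L^\infty(\mathbb S^{n-1})$.

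Next I would check that $g \in V_\infty(f)$ and that the functional passes to the limit. The constraint is preserved because $f^{n-1}$ is bounded and $v_k \to g$ uniformly, so $\int_{\mathbb S^{n-1}} f^{n-1} g\,\dee o_\omega = \lim_{k\to\infty} \int_{\mathbb S^{n-1}} f^{n-1} v_k\,\dee o_\omega = 0$; the gradient bound follows from weak-$*$ lower semicontinuity of the $L^\infty$-norm, $\|\nabla_T g\|_{L^\infty(\mathbb S^{n-1})} \le \liminf_{k\to\infty}\|\nabla_T v_k\|_{L^\infty(\mathbb S^{n-1})} \le 1$. For the functional I would treat the two terms of \eqref{shaped} separately: the zeroth-order term satisfies $\int_{\mathbb S^{n-1}} \tilde h_f v_k\,\dee o_\omega \to \int_{\mathbb S^{n-1}} \tilde h_f g\,\dee o_\omega$ by uniform convergence tested against $\tilde h_f \in L^1(\mathbb S^{n-1})$, while the first-order term satisfies $\int_{\mathbb S^{n-1}} \tilde H_f \cdot \nabla_T v_k\,\dee o_\omega \to \int_{\mathbb S^{n-1}} \tilde H_f \cdot \nabla_T g\,\dee o_\omega$, which is precisely the pairing of the weak-$*$ convergent sequence $\nabla_T v_k$ against $\tilde H_f \in L^1(\mathbb S^{n-1})$. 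Hence $\langle J'(f),g\rangle = \lim_{k\to\infty}\langle J'(f),v_k\rangle = m$, so the minimum is attained at $g$.

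The step demanding the most care is the passage to the limit in the first-order term: one has only weak-$*$ convergence of $\nabla_T v_k$, never strong convergence, so it is essential that the coefficient $\tilde H_f$ belongs to $L^1$ — this is exactly the duality $(L^1)^* = L^\infty$ that renders the pairing continuous. The other load-bearing ingredient is Lemma \ref{lem:EquivalentNorms}, without which the minimising sequence need not be bounded in $W^{1,\infty}(\mathbb S^{n-1})$ and the compactness argument would break down; it is the mean-value constraint built into the definition of $V_\infty(f)$ that activates this lemma.
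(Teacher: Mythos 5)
Your proposal is correct and follows essentially the same route as the paper: a minimising sequence made bounded in $W^{1,\infty}(\mathbb S^{n-1})$ via Lemma \ref{lem:EquivalentNorms}, compactness by Arzel\`a--Ascoli plus weak-$*$ extraction as in Theorem \ref{thm:ExistsConditionalMinimiser}, and passage to the limit in \eqref{shaped} using $\tilde h_f \in L^1(\mathbb S^{n-1})$ and $\tilde H_f \in L^1(\mathbb S^{n-1};\R^n)$. The extra details you supply (finiteness of the infimum, preservation of the mean-value constraint, weak-$*$ lower semicontinuity of the gradient bound) are all implicit in the paper's argument and correctly justified.
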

\begin{proof}
We adapt the proof of Proposition 3.1 in \cite{PaWeFa18} to our setting.
Let $(v_k)_{k \in \mathbb N} \subset V_\infty(f)$ be a sequence such that $\langle J'(f),v_k \rangle \searrow \inf_{v \in V_\infty(f)} \langle J'(f),v \rangle$.
Applying Lemma \ref{lem:EquivalentNorms} gives $\|v_k\|_{W^{1,\infty}(\mathbb{S}^{n-1})} \leq \pi +1$, therefore, as in the proof of Theorem \ref{thm:ExistsConditionalMinimiser}, one has that there is a sequence $k_j \to \infty$ as $j \to \infty$
and $g \in W^{1,\infty}(\mathbb S^{n-1})$ such that 
\begin{displaymath}
v_{k_j} \rightarrow g \mbox{ in } C(\mathbb S^{n-1}) \mbox{ and } \nabla_T v_{k_j} \overset{*}{\rightharpoonup} \nabla_T g \mbox{ in }L^\infty(\mathbb{S}^{n-1}).
\end{displaymath}
In particular we have that $g \in V_\infty(f)$.
Furthermore, since $\tilde h_f \in L^1(\mathbb S^{n-1})$ and $\tilde H_f \in L^1(\mathbb S^{n-1}; \mathbb R^n)$,
\begin{eqnarray*}
\langle J'(f), g \rangle & = &  \int_{\mathbb S^{n-1}} \bigl( \tilde h_f g + \tilde  H_f \cdot \nabla_T g \bigr) \dee o_\omega
= \lim_{j \rightarrow \infty} \int_{\mathbb S^{n-1}} \bigl( \tilde h_f v_{k_j} + \tilde  H_f \cdot \nabla_T v_{k_j} \bigr) \dee o_\omega \\
& = &  \lim_{j \rightarrow \infty} \langle J'(f), v_{k_j} \rangle = \inf_{v \in V_\infty(f)} \langle J'(f),v \rangle.
\end{eqnarray*}
\end{proof}

In the case where $f$ is regular enough, so that the boundary form of the derivative \eqref{eq:BoundaryDerivativeGeneralCase} exists, in particular when $\boundaryFormOfDerivative\in L^1(\mathbb S^{n-1})$, one may show the existence of an optimal descent direction for the boundary form of the derivative with the same argument as above.
\begin{remark} \label{rem:IL05}
We would like to establish a connection between Theorem \ref{optimaldesc} and a theory developed by Ishii and Loreti
in \cite{IshLor05}. To do so, let us assume that $\tilde h_f \in L^\infty(\mathbb S^{n-1})$ and $\tilde H_f \in W^{1,\infty}(\mathbb S^{n-1};\mathbb R^n)$.
Then we obtain after integration by parts on $\mathbb{S}^{n-1}$ that
\begin{displaymath}
\langle J'(f),v \rangle = \int_{\mathbb S^{n-1}} \bigl( \tilde h_f - \nabla_T \cdot \tilde H_f +(n-1) \tilde H_f \cdot \omega - c f^{n-1} \bigr) v \dee o_\omega, \quad v \in V_\infty(f)
\end{displaymath}
where $\nabla_T \cdot \tilde H_f = \sum_{i=1}^n e_i \cdot \nabla_T (\tilde H_f)_i$ and
\begin{displaymath}
c= \bigl( \int_{\mathbb S^{n-1}} f^{n-1} \dee o_\omega \bigr)^{-1}  \int_{\mathbb S^{n-1}} \bigl( \tilde h_f - \nabla_T \cdot \tilde H_f +(n-1) \tilde H_f \cdot \omega 
\bigr) \dee o_\omega.
\end{displaymath}
We note that the $(n-1) \tilde{H}_f \cdot \omega$ term arises from the mean curvature of $\mathbb{S}^{n-1}$, see \cite[Equation (2.16)]{DecDziEll05} for example.
If we let
\begin{equation}\label{eq:definitionOfQ}
    q_f:= \tilde h_f - \nabla_T \cdot \tilde H_f +(n-1) \tilde H_f \cdot \omega - c f^{n-1}
\end{equation}
we have $q_f \in L^\infty(\mathbb S^{n-1})$ and
\begin{displaymath}
\langle J'(f),v \rangle = \int_{\mathbb S^{n-1}} q_f v \dee o_\omega, \quad v \in V_\infty(f), \quad \mbox{ with } \int_{\mathbb S^{n-1}} q_f \dee o_\omega=0.
\end{displaymath}
Let us consider for $p \geq 2$ 
\[
    g_p := \argmin_{\{ v \in W^{1,p}(\mathbb S^{n-1})\,|\, \int_{\mathbb S^{n-1}} 
f^{n-1} v \dee o_\omega=0 \} }\left\{ \frac{1}{p} \int_{\mathbb S^{n-1}} | \nabla_T v |^p \dee o_\omega + \int_{\mathbb S^{n-1}} q_f v \dee o_\omega \right\}.
\]
By adapting the arguments of \cite[Section 5]{IshLor05} to our setting it is then possible to show that there exists a sequence  $p_j \to \infty$ as $j \to \infty$
and $g \in V_\infty(f)$ such that  $(g_{p_j})_{j \in \mathbb N}$ converges to $g$  uniformly on $\mathbb S^{n-1}$ and $\langle J'(f),g \rangle=\min_{v \in V_\infty(f)} \langle J'(f),v \rangle$.
As a result, the optimal descent direction in
Theorem 2.4 can be approximated with the help of the solution of a $p$--Laplace problem. This relationship has been exploited with promising results for a fluid dynamics application
in \cite{MulKulSie21} for domains 
that are not necessarily star--shaped. \\
In this context we note that it is also possible to consider a minimiser over H\"older functions, rather than Lipschitz functions, which is done in \cite{Jyl15}.
\end{remark}

\subsection{Relation to Optimal Transport} \label{sec:OT}
Let us again assume as in  Remark \ref{rem:IL05} that $\tilde h_f \in L^\infty(\mathbb S^{n-1})$ and $\tilde H_f \in W^{1,\infty}(\mathbb S^{n-1};\mathbb R^n)$ and hence
\begin{displaymath}
\langle J'(f),v \rangle = \int_{\mathbb S^{n-1}} q_f v \dee o_\omega, \quad v \in V_\infty(f),
\end{displaymath}
where $q_f$ is defined in \eqref{eq:definitionOfQ} and satisfies $q_f \in L^\infty(\mathbb S^{n-1})$ and $\int_{\mathbb S^{n-1}} q_f \dee o_\omega =0$. 
Abbreviating $q_f^+:=\max(q_f,0), q_f^-=-\min(q_f,0)$ we have
$q_f=q_f^+- q_f^-$ as well as $\int_{\mathbb S^{n-1}} q_f^+ \dee o_\omega = \int_{\mathbb S^{n-1}} q_f^- \dee o_\omega$. In what follows we assume for simplicity that
both integrals are equal to 1. Then, $\mu^\pm=q_f^\pm \dee o_\omega$ are probability measures and
\begin{displaymath}
\langle J'(f), v \rangle = \int_{\mathbb S^{n-1}} v \, \dee (\mu^+ - \mu^-), \quad v \in V^\infty(f). 
\end{displaymath}
In order to see the link to optimal transport, it is convenient to consider a maximisation problem instead of a minimisation problem.
Since $v \mapsto \langle J'(f), v \rangle$ is linear, we may convert our existing minimisation problem, which appears in Theorem \ref{optimaldesc} to a maximisation problem by a modification of signs.
Observe that
\begin{equation} \label{max}
\max_{v \in V_\infty (f)} \langle J'(f),v \rangle = \max_{v \in W^{1,\infty}(\mathbb S^{n-1}), \Vert \nabla_T v \Vert_{L^\infty(\mathbb S^{n-1})} \leq 1} \int_{\mathbb S^{n-1}} q_f v \dee o_\omega
%\langle J'(f),v \rangle,
\end{equation}
where we have used that $\int_{\mathbb S^{n-1}}q_f \dee o_\omega =0$. Furthermore, it is possible to verify that 
\begin{displaymath}
\lbrace v \in W^{1,\infty}(\mathbb S^{n-1}) \, | \, \Vert \nabla_T v \Vert_{L^\infty(\mathbb S^{n-1})} \leq 1 \rbrace \cong \mbox{Lip}_1,
\end{displaymath}
where $\mbox{Lip}_1$ denotes the set of all $v \in C^{0,1}(\mathbb S^{n-1})$ with ${|v(x)-v(y)|}\leq {d(x,y)}$ for all $x,\,y \in \mathbb{S}^{n-1}$.
As a result we find with the help of \eqref{max} and the duality relation which appears in Equation (3.1) of \cite{San15} that
\begin{eqnarray}
\max_{v \in V_\infty(f)} \langle J'(f),v \rangle & = &  \max\left\{ \int_{\mathbb{S}^{n-1}} v \,\dee (\mu^+ - \mu^-) \, | \, v \in \mbox{Lip}_1 \right\} \nonumber \\
& = & \min \left\{ \int_{\mathbb{S}^{n-1} \times \mathbb{S}^{n-1}} d(x,y) \,\dee \psi(x,y) \,|\, \psi \in \Pi( \mu^+,\mu^-) \right\}. \label{maxmin}
\end{eqnarray}
In the above,
\begin{equation*}
    \Pi(\mu^+,\mu^-) := \{ \psi \in \mathcal{P}(\mathbb{S}^{n-1}\times \mathbb{S}^{n-1}) \,|\, (\pi_x)_\# \psi = \mu^+,\, (\pi_y)_\#\psi = \mu^- \},
\end{equation*}
where $\mathcal{P}$ denotes probability measures and $\pi_x$ and $\pi_y$ are the projections from $\mathbb{S}^{n-1}\times \mathbb{S}^{n-1}$ onto the first and second components respectively.
Thus, the problem in Theorem 2.4 (with $\min$ replaced by $\max$) is the dual of the optimal transport problem of minimising the cost of transporting $\mu^+$ to $\mu^-$ with the
cost given by the spherical distance.
This relation to Optimal Transport will be exploited in Section \ref{sec:LipOTImplementation} as a method to produce an approximation of a direction of steepest descent.

\subsection{Steepest descent for $n=2$}\label{sec:SteepestDescentNIs2}
The determination of the minimiser $g$ in Theorem \ref{optimaldesc}  is by no means straightforward. In what follows we shall focus on
the case $n=2$ and write $\bar f(\phi)=f( (\cos(\phi), \sin(\phi) )^t)$ for $f \in W^{1,\infty}(\mathbb S^1)$ and $\phi \in [0,2\pi]$. Since
$ \bar {f}'(\phi) (-\sin(\phi), \cos(\phi))^t=\nabla_{T} f((\cos(\phi), \sin(\phi))^t)$ we obtain the following form of \eqref{shaped}:
\begin{equation} \label{sdint}
\langle J'(f),v \rangle = \int_0^{2 \pi} \bigl( \bar h_f(\phi) v( \phi) + \bar H_f(\phi)  v'(\phi) \bigr) \, \dee \phi, \quad v \in W^{1,\infty}_{\mbox{per}}(0,2 \pi).
\end{equation}
Here, $\bar h_f(\phi)=\tilde h_f((\cos(\phi), \sin(\phi))^t)$, $\bar H_f(\phi)= \tilde H_f((\cos(\phi), \sin(\phi))^t)\cdot (-\sin(\phi), \cos(\phi))^t$ for $\phi \in [0, 2 \pi]$. The boundary form of the shape
derivative, when sufficiently regular, can be treated analogously where one replaces occurrences of $\tilde h_f$ with $\boundaryFormOfDerivative$ and replaces $\tilde H_f$ with $0$. \\
In this setting the set of admissible directions becomes
\begin{displaymath}
V_\infty(f)= \left\lbrace v \in W^{1,\infty}_{\mbox{per}}(0,2 \pi) \, | \, \int_0^{2 \pi} \bar f  v \dee \phi =0, \;
\Vert v' \Vert_{L^{\infty}(0,2\pi)} \leq 1 \right\rbrace.
\end{displaymath}
In order to proceed and motivate our numerical approach we assume  the situation of Remark \ref{rem:IL05}, namely 
that $\bar h_f \in L^\infty(0,2\pi)$ and $\bar H_f \in W^{1,\infty}_{\mbox{per}}(0,2 \pi)$.
Then
after integration by parts and using the condition $\int_0^{2 \pi} \bar f \, v \dee \phi=0$ for $v \in V^\infty(f)$, we obtain
\begin{displaymath}
\langle J'(f), v \rangle = \int_0^{2 \pi} (\bar h_f(\phi) - \bar H_f'(\phi) - c \bar f(\phi)) v(\phi) \dee \phi, \quad \mbox{ where }  
c=\frac{\int_0^{2 \pi} \bar h_f(\phi) \dee \phi}{\int_0^{2 \pi} \bar f(\phi) \dee \phi}.
\end{displaymath}
If we let $q_f:= \bar h_f - \bar H_f' - c \bar f \in L^\infty(0,2\pi)$ we have
\begin{displaymath}
\langle J'(f),v \rangle = \int_0^{2 \pi} q_f(\phi) v(\phi) \dee \phi, \quad v \in V_\infty(f),
\end{displaymath}
as well as
\begin{displaymath}
\int_0^{2 \pi} q_f(\phi) \dee \phi= \int_0^{2 \pi} \bar h_f(\phi) \dee \phi - c \int_0^{2 \pi} \bar f(\phi) \dee \phi - \bar  H_f(2 \pi) + \bar H_f(0) =0
\end{displaymath}
by the choice of $c$.
Our aim is to obtain a function $\bar{g} \in V_\infty(f)$ such that
\begin{equation} \label{maxbc}
\int_0^{2 \pi} q_f(\phi) \bar g(\phi) \dee \phi= \min_{v \in V_\infty(f)} \int_0^{2 \pi} q_f(\phi) v(\phi) \dee \phi.
\end{equation}
To do so, we introduce 
\begin{equation} \label{defG}
G(\phi) :=  -\int_0^\phi q_f(t) \dee t
\end{equation}
and note that $G(2\pi) = G(0) = 0$.
We have for any $v \in V_\infty(f)$ and any $\beta \in \R$
\begin{multline}\label{eq:EquationForKlausMethod}
    \int_0^{2\pi} q_f(\phi)v(\phi)\dee \phi
    = - \int_0^{2\pi} G'(\phi) v(\phi) \dee \phi
    = \int_0^{2\pi} G(\phi)v'(\phi)\dee \phi
    \\
    = \int_0^{2\pi} (G (\phi) - \beta) v'(\phi)\dee \phi
    \geq - \int_0^{2\pi} |G(\phi)-\beta| \dee \phi.
\end{multline}
In order to proceed we define as in \cite[p 414]{IshLor05}
\begin{eqnarray}
\; M(r) &:= & |\{\phi \in [0,2\pi) : G(\phi) < r \}|, r \in \mathbb R; \: \beta^* := \sup \{ r \in \mathbb{R} : M(r) \leq \pi\}; \label{Mbeta} \\
O_{\pm} &:=& \{ \phi\in [0,2\pi) : G(\phi) \gtrless \beta^*\},  \,
    O_0 := \{ \phi \in[0,2\pi) : G(\phi) = \beta^*\}; \label{defO} \\
k & := & \left\{ 
\begin{array}{cl}
0, & |O_0| = 0,\\
\frac{|O_+|-|O_-|}{|O_o|}, & \mbox{otherwise}.
\end{array}
\right. \label{defk}
\end{eqnarray}
From the choice of $\beta^*$, we deduce from Lemma 3.5 in \cite{IshLor05} that $|k|\leq 1$ and set
\begin{equation}\label{defg}
    \bar{g}(\phi) = \int_0^\phi \left(\chi_{O_-}(t) - \chi_{O_+}(t) + k \chi_{O_0}(t) \right)\dee t + \alpha,
\end{equation}
where $\alpha \in \R$ is chosen in such a way that $\int_0^{2\pi} \bar{g}(\phi) \bar f(\phi) \dee \phi = 0$.

% We now provide some comments on the form of $\tilde{g}$ and the construction.
% The value $\beta$ may be seen to correspond to a mean value of the function $G$ and the sets $O_\pm$ corresponding to the locations which are above or below this value.
% From the charactersisation of the limit of the inverse $\zeta^{-1}_p$ as the Heaviside function it is seen natural that this formula says that $\tilde{g}$ should become more positive (resp. negative) when $G$ is below (resp. above) $\beta$, depending only on the sign, not on the magnitude of the distance above or below.
% The value $k$ may be seen as a correction term to ensure the boundary condition at $2\pi$ is enforced.
% For more details on this explicit construction, we refer the reader to Section 3 of \cite{IshLor05}.
% Let us use $\tilde g$ in order to obtain an explicit direction of steepest descent in our periodic setting.

\begin{proposition}\label{prop:minimiser}
The function $\bar{g}$ defined in \eqref{defg} belongs to $V_\infty(f)$ and satisfies
\[
    \langle J'(f),\bar{g} \rangle = \min_{v \in V_\infty(f)} \langle J'(f),v\rangle.
\]
\end{proposition}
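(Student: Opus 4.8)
The plan is to reduce everything to the single lower bound already recorded in \eqref{eq:EquationForKlausMethod}, specialised to the constant $\beta=\beta^*$, and then to show that $\bar g$ turns every inequality in that chain into an equality. Concretely, I would proceed in three steps: first confirm that $\bar g \in V_\infty(f)$; next establish the universal lower bound $\langle J'(f),v\rangle \geq -\int_0^{2\pi}|G(\phi)-\beta^*|\,\dee\phi$ valid for all admissible $v$; and finally verify that $\bar g$ attains exactly this value. The intuition behind the construction is that $\beta^*$ is the median level of $G$, i.e.\ the minimiser of $\beta \mapsto \int_0^{2\pi}|G-\beta|\,\dee\phi$, so that $\beta=\beta^*$ gives the sharpest available lower bound; but logically we only need the bound for this one value of $\beta$.

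For membership in $V_\infty(f)$ I would check the three defining properties. Since $O_-,O_+,O_0$ partition $[0,2\pi)$ and $\bar g'(\phi)=\chi_{O_-}(\phi)-\chi_{O_+}(\phi)+k\chi_{O_0}(\phi)$, the bound $|k|\leq 1$, which follows from the choice of $\beta^*$ via Lemma~3.5 of \cite{IshLor05} as already noted after \eqref{defk}, gives $\|\bar g'\|_{L^\infty(0,2\pi)}\leq 1$ immediately. Periodicity amounts to $\int_0^{2\pi}\bar g'\,\dee\phi = |O_-|-|O_+|+k|O_0|=0$: when $|O_0|>0$ this is immediate from the definition \eqref{defk} of $k$, and when $|O_0|=0$ the definition \eqref{Mbeta} of $\beta^*$ forces $|O_-|=|O_+|=\pi$, so the identity again holds. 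Finally the constraint $\int_0^{2\pi}\bar f\,\bar g\,\dee\phi=0$ is arranged by the additive constant $\alpha$, which is admissible because $\int_0^{2\pi}\bar f\,\dee\phi>0$; note that adding $\alpha$ leaves $\langle J'(f),\bar g\rangle$ unchanged since $\int_0^{2\pi}q_f\,\dee\phi=0$.

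For the lower bound I would simply put $\beta=\beta^*$ in \eqref{eq:EquationForKlausMethod}, obtaining $\langle J'(f),v\rangle\geq -\int_0^{2\pi}|G(\phi)-\beta^*|\,\dee\phi$ for every $v\in V_\infty(f)$. To see that $\bar g$ is optimal I would run the same integration by parts for $\bar g$ itself: using $G(0)=G(2\pi)=0$ together with $\int_0^{2\pi}\bar g'\,\dee\phi=0$ gives $\langle J'(f),\bar g\rangle=\int_0^{2\pi}G\,\bar g'\,\dee\phi=\int_0^{2\pi}(G-\beta^*)\bar g'\,\dee\phi$. The crux is then the pointwise sign matching encoded in \eqref{defg}: on $O_-$ one has $G-\beta^*<0$ and $\bar g'=1$, on $O_+$ one has $G-\beta^*>0$ and $\bar g'=-1$, and on $O_0$ the factor $G-\beta^*$ vanishes, so in all three cases $(G-\beta^*)\bar g' = -|G-\beta^*|$. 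Hence $\langle J'(f),\bar g\rangle=-\int_0^{2\pi}|G-\beta^*|\,\dee\phi$, which matches the lower bound and proves $\bar g$ is a minimiser.

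The integrations by parts and the sign bookkeeping are routine. The only genuinely delicate point is the structural fact about the median level $\beta^*$ chosen in \eqref{Mbeta}, namely that it simultaneously yields $|k|\leq 1$ and the balance identity $|O_-|-|O_+|+k|O_0|=0$; this is precisely where the construction \eqref{Mbeta}--\eqref{defk} and Lemma~3.5 of \cite{IshLor05} are indispensable, and everything else follows mechanically once these are in hand.
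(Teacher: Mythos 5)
Your proof is correct and follows essentially the same route as the paper: both verify membership in $V_\infty(f)$ via $|k|\le 1$ and the balance identity $|O_-|-|O_+|+k|O_0|=0$, then set $\beta=\beta^*$ in \eqref{eq:EquationForKlausMethod} and show that the resulting lower bound $-\int_0^{2\pi}|G(\phi)-\beta^*|\,\dee\phi$ is attained by $\bar g$ through the pointwise sign matching $(G-\beta^*)\bar g'=-|G-\beta^*|$. Your explicit case analysis for $|O_0|=0$ (where the definition of $\beta^*$ forces $|O_-|=|O_+|=\pi$) merely spells out what the paper compresses into the phrase ``by the choice of $\beta^*$''.
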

\begin{proof}
    By the choice of $\beta^*$ we see that $\int_0^{2\pi} \left(\chi_{O_-}(t) - \chi_{O_+}(t) + k \chi_{O_0}(t) \right)\dee t = |O_-|-|O_+|+k|O_0| = 0$.
    Moreover, $\bar{g} \in W^{1,\infty}_{per}(0,2\pi)$ with $\|\bar g'\|_{L^\infty(0,2\pi)} \leq \max(1,|k|) =1$, so that $\bar{g} \in V_\infty(f)$.
    Furthermore, for every $v \in V_\infty(f)$ and setting $\beta = \beta^* $ in \eqref{eq:EquationForKlausMethod},
    \begin{align*}
        \int_0^{2\pi} q_f(\phi) v(\phi) \dee \phi
        \geq&
        -\int_0^{2\pi} |G(\phi) - \beta^*| \dee \phi
        \\
        =&
        \int_{O_-} (G(\phi) - \beta^*) \dee \phi - \int_{O_+}(G(\phi) - \beta^*) \dee \phi
        \\
        =&
        \int_0^{2\pi} (G(\phi) - \beta^*) \bar{g}'(\phi) \dee \phi
        =
        \int_{0}^{2\pi} q_f(\phi) \bar{g}(\phi)\dee \phi,
    \end{align*}
    where the first equality follows from the choice of $\beta^*$ in \eqref{Mbeta} and the definition of $O_\pm$ in \eqref{defO}, and the second equality follows from $\bar g' = \mp1$ in $O_\pm$.
    Recalling that $\langle J'(f), v \rangle = \int_0^{2\pi} q_f(\phi)v(\phi) \dee \phi$ for $v \in V_\infty(f)$, the result follows.
\end{proof}

\section{Discretisation}

\subsection{Approximation of the shape derivative}

We use the above ideas in order to set up  numerical schemes in two space dimensions. To do so, we approximate both the radial function $f$ and the solutions to the state
and adjoint equations with the help of continuous, piecewise linear finite elements, but on grids that are independent of each other. Let
$\mathcal{T}_h  $ be a quasi-uniform triangulation of (a subset of) the unit ball $B$, where $B_h := \left(\bigcup_{T \in \mathcal{T}_h} T \right)^\circ \subset B$ and 
the vertices on $\partial B_h$ lie on $\partial B$.
We define $\mathcal{S}_h$ to be
\[
    \mathcal{S}_h := \{ \hat \eta_h \in C(B_h) \, | \, \hat \eta_h=0 \mbox{ on } \partial B_h, \, \hat  \eta_{h|T} \in P^1(T), \, T \in \mathcal T_h\},
\]
where $P^1(T)$ denotes the set of polynomials on $T$ of degree 1 or less.
Next, given $N \in \mathbb{N}$, we set $\phi_i=2 \pi \frac{i}{N}, i=0,\ldots,N$ as well as
\[
    \mathcal{S}^N := \{ \bar v \in C([0, 2 \pi]) : \bar v_{| [\phi_{i-1},\phi_i]} \in P^1([\phi_{i-1},\phi_i]), i=1,\ldots,N, \bar v(0)=\bar v(2 \pi) \},
\]
the set of continuous, piecewise linear, periodic functions on $[0, 2 \pi]$. \\
Given $\bar f \in  \mathcal{S}^N$, we set $f(\omega)=\bar f(\phi)$ if $\omega = (\cos(\phi),\sin(\phi))^t$ and define $\hat{u}_h, \hat{p}_h \in \mathcal{S}_h$ as the unique solutions of
\begin{eqnarray} 
\int_{B_h} A_f(\omega_x)  \nabla \hat u_h \cdot \nabla \hat \eta_h \, \dee x = \int_{B_h} \hat F_f \hat \eta_h \, f(\omega_x)^2 \, \dee x; \quad \forall \hat \eta_h \in
\mathcal{S}_h, \label{staterefdisc} \\
\int_{B_h} A_f(\omega_x)  \nabla \hat p_h \cdot \nabla \hat \eta_h \, \dee x = \int_{B_h} (\hat u_h - \hat z_f)  \hat \eta_h \, f(\omega_x)^2 \, \dee x \quad \forall \hat \eta_h \in
\mathcal{S}_h, \label{adjointrefdisc}
\end{eqnarray}
where the integrals are approximated with a quadrature which is exact on polynomials of degree 6 on each of the triangles of $\mathcal{T}_h$. 
For convenience, we remind the reader that for $x \in B $, $x \neq 0$, ${A}_f(\omega_x) = f(\omega_x)^{2}D\Phi_f(x)^{-1}D\Phi_f(x)^{-t}$, where $\Phi_f(x) = f(\omega_x) x$ and $\omega_x = \frac{x}{|x|}$, we recall that $\hat{z}_f  = z \circ \Phi_f$ and $\hat{F}_f = F \circ \Phi_f$  on $B$.
Let us use $\hat{u}_h$ and $\hat{p}_h$ in order to define 
discrete versions of \eqref{hf},  \eqref{Hf} as well as \eqref{hfboundary}: \\
\subsubsection{Volume form of the shape derivative} Let $h_{f,h}\colon B_h \rightarrow \mathbb R, \, H_{f,h}\colon B_h \rightarrow \mathbb R^2$ be defined by
\begin{eqnarray}
\label{eq:hfh}
 h_{f,h} & = &  2 \frac{| \nabla_T f|^2}{f^3}  (\nabla \hat u_h \cdot \omega_x) (\nabla \hat p_h \cdot \omega_x) \\
& & - \frac{1}{f^2}  \bigl( (\nabla_T f \cdot \nabla \hat u_h ) 
 (\omega_x \cdot \nabla \hat p_h) + (\nabla_T f \cdot \nabla \hat p_h)
( \omega_x \cdot \nabla \hat u_h) \bigr)\nonumber  \\
& &  + f  \bigl( (\hat u_h - \hat z_f)^2 - |x|  (\hat u_h - \hat z_f) \nabla \hat z_f \cdot \omega_x - |x|  \hat F_f \, \nabla \hat p_h
\cdot \omega_x \bigr); \nonumber \\
\label{eq:Hfh}
H_{f,h}  & = & \frac{1}{f} \bigl( (\nabla \hat p_h \cdot \omega_x) \nabla \hat u_h + (\nabla \hat u_h  \cdot \omega_x) \nabla \hat p_h \bigr)   
 \\ & & -  \frac{2}{f^2}  (\nabla \hat u_h \cdot \omega_x) (\nabla \hat p_h \cdot \omega_x) \,   \nabla_T f.\nonumber 
\end{eqnarray}
Next, let  $\bar h_{f,h}, \, \bar H_{f,h} \in \mathcal{S}^N$ be given by
\begin{eqnarray*}
\int_0^{2 \pi} \bar h_{f,h}(\phi)  \bar v(\phi) \dee \phi & = &  \int_{B_h} h_{f,h}(x) v(\omega_x) \, \dee x, \quad \forall \bar v \in \mathcal{S}^N; \\
\int_0^{2 \pi} \bar H_{f,h}(\phi) \bar v(\phi) \dee \phi &=& \int_{B_h} H_{f,h}(x) \cdot  v(\omega_x) \omega_x^\perp  \, \dee x, \quad \forall \bar v \in \mathcal{S}^N,
\end{eqnarray*}
where, as above $v(\omega)=\bar v(\phi)$ if $\omega=(\cos(\phi),\sin(\phi))^t$ and $(a_1,a_2)^\perp=(-a_2,a_1)$. \\[1.2mm]
\subsubsection{Boundary form of the shape derivative} Let $\DiscreteboundaryFormOfDerivative\colon \partial B_h \rightarrow \mathbb R$ be defined by
\begin{displaymath}
\DiscreteboundaryFormOfDerivative = \frac{1}{2} (\hat u_h - \hat z_f)^2 f  + \frac{1}{f} \bigl( 1 + \frac{| \nabla_T f|^2}{f^2} \bigr)
(\nabla \hat u_h \cdot \omega_h) (\nabla \hat p_h \cdot \omega_h),
\end{displaymath}
where $\omega_h$ is the outer unit normal to $\partial B_h$. Let $\SecondDiscreteBoundaryFormOfDerivative \in \mathcal S^N$ be given by
\begin{displaymath}
\int_0^{2 \pi} \SecondDiscreteBoundaryFormOfDerivative (\phi) \bar v(\phi) \dee \phi = \int_{\partial B_h} \DiscreteboundaryFormOfDerivative(\omega_x) v(\omega_x) \dee o_x, \quad \forall \bar v \in \mathcal{S}^N.
\end{displaymath}
The functions $\bar h_{f,h}$ and $\bar H_{f,h}$ are approximations to those that appear in the formula \eqref{sdint}. We
therefore define $I_h(\bar f)\colon \mathcal S^N \rightarrow \mathbb R$ by
\begin{equation} \label{approxshape}
\langle I_h(\bar f), \bar v \rangle:= \int_0^{2 \pi} \bigl( \bar h_{f,h}(\phi) \bar v(\phi) + \bar H_{f,h}(\phi) \bar v'(\phi) \bigr) \dee \phi, \quad \bar v \in \mathcal S^N
\end{equation}
as an approximation to $\langle J'(f),v \rangle$.
We further note that one can make the analogous definition based on the boundary form, considering the linear operator
\begin{equation}\label{approxBoundaryShape}
    \int_0^{2\pi} \SecondDiscreteBoundaryFormOfDerivative(\phi) \bar{v}(\phi) \dee \phi  \quad \bar{v} \in\mathcal{S}^N.
\end{equation}

\noindent
Based on \eqref{approxshape} or \eqref{approxBoundaryShape} the construction of a nearly optimal descent direction  $\bar g \in \mathcal S^N$ is given by one of the methods described below: a) a discrete
version of the approach of Section \ref{sec:SteepestDescentNIs2} (see \ref{LF}), b) an application of the Sinkhorn Algorithm from optimal transport
(see  \ref{sec:LipOTImplementation}), or c) a Hilbertian method (see  \ref{sec:H1Implementation}).

\begin{remark}
An inspection of our discrete approach yields that we also could choose the function $\bar f \in W^{1,\infty}_{\mbox{per}} (0,2\pi)$ instead of $\bar{f} \in \mathcal{S}^N$, and to consider $I_h(\bar f)$ in \eqref{approxshape} as a linear functional on $W^{1,\infty}_{\mbox{per}} (0,2\pi)$, which would correspond to variational discretisation \cite{Hin05} of our shape optimisation problem. However, the evaluation of integrals through the appearance of the functions $f$ and $\nabla_Tf$ in general requires quadrature rules. In the variational discretisation approach this could be accomplished with replacing $\bar f$ by its Lagrange interpolation, thus leading to the approach proposed in the present section. 
\end{remark}

\subsection{Lipschitz formula} \label{LF}

Since $\bar H_{f,h} \in W^{1,\infty}_{per}(0,2 \pi)$
we may use a discrete version of the approach described in Section \ref{sec:SteepestDescentNIs2} in order to produce an 
approximate direction of steepest descent $\bar g \in \mathcal{S}^N$ as follows:
Fix $\epsilon>0$ and define $G \in \mathcal{S}^N$ by 
\begin{displaymath}
G(\phi_i) := \bar H_{f,h}(\phi_i) - \bar H_{f,h}(0) - \int_0^{\phi_i} \bigl( \bar h_{f,h} - c \bar f \bigr) \dee t, \quad \mbox{ where } c = \frac{\int_0^{2 \pi} \bar h_{f,h} \dee t }{
\int_0^{2 \pi} \bar f \dee t }.
\end{displaymath}
For $i=1,\ldots,N$ and $G_i=G(\phi_i)$ we let 
\begin{eqnarray*}
M_i &:= & \sum_{ \{ j\in\{1,...,N\} | G_j \leq G_i \} } \frac{2\pi}{N} , \quad \beta :=\max \{G_i : M_i < \pi : i =1,...,N\}; \\
O_{\pm} &:= & \{ i \in \lbrace 1,\ldots,N \rbrace : G_i \gtrless \beta \pm \epsilon \},  \,
    O_0 := \lbrace 1,\ldots,N \rbrace \setminus (O_+ \cup O_-), \\
k &:= & \left\{ 
\begin{array}{cl}
0, & O_0 = \emptyset,\\
\frac{|O_+|-|O_-|}{|O_0|}, & \mbox{otherwise}.
\end{array}
\right.
\end{eqnarray*}
Finally, let $\bar g \in \mathcal{S}^N$ be defined by 
\begin{equation}\label{eq:discreteFormula}
\bar g(\phi_i) = \alpha + \frac{1}{2} \frac{2\pi}{N}\sum_{j=1}^i \chi_{j \in O_-} + \chi_{j-1 \in O_-} - \chi_{j \in O_+}- \chi_{j-1 \in O_+} + k \left(\chi_{ j \in O_0}+ \chi_{ j-1 \in O_0}\right),
\end{equation}
where $\alpha$ is chosen so that $\int_0^{2\pi} \bar{f} \bar{g} = 0$ and we identify $0 \cong N$.
%in the case of evaluating $j-1$ at $j=0$.
% Motivated by Propostion \ref{prop:minimiser} our approximate steepest descent direction $\bar g \in \mathcal S^N$ is then given by
% \begin{equation}\label{eq:DiscreteMinimisingDirectionFormula}
%     \bar g = \tilde g - \frac{1}{2\pi} \int_0^{2 \pi}  \bar f \tilde g \dee \phi.
% \end{equation}
We now make some remarks on this discretisation:
\begin{itemize}
    \item  The sets $O_+,\, O_-$ and $O_0$ are not necessarily the natural discrete version of their counterparts in \eqref{defO}, 
    this is chosen to avoid the need to find the points which are identically equal to $\beta$ and allow us to give the function $\bar g$ as a discrete function in $\mathcal{S}^N$.
    \item
    It may be preferable to choose the $\epsilon>0$ to depend on the discretisation and current state.
    For our experiments we take
    \[
        \epsilon = \frac{3}{2N}\left( \max_{i=1,...,N} G_i - \min_{i=1,...,N}G_i \right)
    \]
\end{itemize}

\subsection{Sinkhorn algorithm}\label{sec:LipOTImplementation}
In this section we aim to use a discrete version of the observation in  Section \ref{sec:OT} to obtain an approximate direction of steepest descent $\bar g \in \mathcal{S}^N$.
Abbreviating $q_{f,h}:= \bar h_{f,h} - \bar H'_{f,h} - c \bar f$ we may write
\begin{displaymath}
\langle I_h(\bar f), \bar v \rangle = \int_0^{2 \pi} q_{f,h} \bar v \dee \phi.
\end{displaymath}
Denoting by $\lbrace \varphi_1,\ldots,\varphi_N \rbrace$ the standard
nodal basis of $\mathcal S^N$, where due to periodicity we require only $N$ basis functions, we have $\bar v = \sum_{i=1}^N \bar v(\phi_i) \varphi_i$.
If $\int_0^{2\pi} \bar v \bar f \dee \phi =0$,
\begin{displaymath}
\langle I_h(\bar f), \bar v \rangle = \sum_{i=1}^N a_i \bar v(\phi_i), \quad \mbox{ where } \quad a_i:= \int_0^{2 \pi} q_{f,h} \varphi_i \dee \phi, \quad i=1,\ldots,N.
\end{displaymath}
Notice that $a$ satisfies $\sum_{i=1}^N a_i=0$ because $\int_0^{2\pi} q_{f,h} \dee \phi =0$.
Let $N^\pm:= \lbrace i \in \lbrace 1,\ldots,N \rbrace \, | \, a_i \gtrless 0 \rbrace$.
Let us abbreviate $C_{ij}=d(\phi_i,\phi_j)$, where
\begin{displaymath}
d(\phi,\tilde \phi)=\arccos(\cos(\phi-\tilde \phi)), \quad \phi, \tilde \phi \in [0, 2\pi]
\end{displaymath}
and set $R(C):= \{ \xi \in \R^N\, |\, \xi_i - \xi_j \leq C_{ij}, (i,j) \in N^+\times N^-\}$ as well as
\[
    U(a) := \left\{ P \in \R^{N \times N} | P \geq 0,\, \sum_{j \in N^-} P_{ij} = a_i,\, i \in N^+, \, \sum_{i \in N^+}P_{ij} = -a_j,\, j \in N^-\right\}.
\]
We obtain a discrete version of
\eqref{maxmin} via 
\begin{eqnarray}
\max_{\bar v \in \mathcal S^N \cap V_\infty(\bar f)} \langle I_h(\bar f), \bar v \rangle %  \\
& = & \max \left\{ \sum_{i\in N^+} a_i \xi_i + \sum_{j \in N^-} a_j \xi_j \,  | \,\xi \in R(C) \right\} \nonumber \\
& = & \min \left\{ \sum_{i \in N^+}\sum_{j \in N^-} C_{ij}P_{ij} \, | \, P \in U(a) \right\},\label{eq:DiscreteDualityRelation}
\end{eqnarray}
where the latter is a discrete optimal transport problem and the former its dual, which we solve with the help of the Sinkhorn algorithm.

For $\delta>0$, the Sinkhorn algorithm minimises the regularised quantity
\[
    \sum_{i \in N^+}\sum_{j \in N^-} C_{ij}P_{ij} + \delta P_{ij}(\log(P_{ij})-1).
\]
For notational convenience, let $u^l,\, v^l \in \R^N$ for $l \geq 0$.
Letting $K_{ij} = \exp(-\frac{1}{\delta} C_{ij}),\, u^0_i = 1$ and $v^0_j=1$ for $i \in N^+,\, j \in N^-$ and $0$ otherwise, the Sinkhorn iteration is given by
\[
    u_i^{l+1} = \frac{a_i}{(Kv^l)_i}, i \in N^+, \quad v_j^{l+1} = \frac{-a_j}{(K^t u^{l+1})_j}, j \in N^-
\]
for $l \geq 0$. 
The matrix $(u_i^l K_{ij}v_j^l)_{(i,j)\in N^+\times N^-}$ corresponds to the primal solution, that is, will approximate the optimal $P_{ij}$.
The vectors $(\delta \log(u^l_i))_{i \in N^+}$ and $(\delta \log(v^l_j))_{j \in N^-}$ correspond to the dual solution in this algorithm, that is, they will approximate the optimal $\xi$, with $\xi_i \approx \delta \log(u^l_i),\, \xi_j \approx -\delta \log(v^l_j)$, $(i,j) \in N^+\times N^-$.
For further details on the Sinkhorn algorithm, we refer the reader to \cite[Section 4.2]{PeyCut19}.

We set $\delta = 0.05$ and stop the iterations when either $l = 2000$ or $\frac{1}{|N^+|} \sum_{i \in N^+} |a_i - \sum_{j \in N^-} u^l_i K_{ij} v^l_j  | \leq 10^{-6}$ 
and $\frac{1}{|N^-|} \sum_{j \in N^-} |-a_j - \sum_{i\in N^+} u^l_i K_{ij} v^l_j  | \leq 10^{-6}$. \\
We finally define $\bar g \in \mathcal S^N$ by assigning the following values at the vertices $\phi_i$:
\begin{eqnarray*}
    \bar g(\phi_i) & = & \alpha + \inf_{j \in N^-} \left( -\delta \log(v^l_j) + C_{ij}\right), \quad i  \in N^+, \\
    \bar g(\phi_j) & = & \sup_{i \in N^+} \left(\bar g(\phi_i) - C_{ij}\right), \quad j \in N^-, \\
    \bar g(\phi_k) & = & \alpha+ \inf_{j \in N^-} \left( -\delta \log(v_j^l) + d{(\phi_k,\phi_j)} \right), \quad k \in \lbrace 1,\ldots,N \rbrace \setminus (N^+ \cup N^-),
\end{eqnarray*}
where $\alpha$ is chosen to ensure that $\int_0^{2 \pi} \bar f \bar g \dee \phi = 0$.
We note that this method to assign $\bar g$ at the vertices is not the natural way.
We chose the method given above as we found it to give slightly preferable results for $\bar g$ both in terms of the shape and of the evaluation of $\int_0^{2 \pi} q_{f,h} \bar g \dee \phi$.
As we see, particularly from \eqref{eq:DiscreteDualityRelation}, a more natural way would be to directly assign the dual variables $\alpha + \delta \log(u^l_i)$ to $\bar g(\phi_i)$ for $i \in N^+$ and $\alpha -\delta \log(v^l_i)$ to $\bar g(\phi_j)$ for $j \in N^-$, where $\alpha$ is again an additive constant to ensure the integral condition.

\subsection{$H^1$ minimising direction}\label{sec:H1Implementation}
As outlined in Remark \ref{rem:IL05} 
our Lipschitz direction $\bar g$ from Proposition \ref{prop:minimiser} can be obtained as the uniform $p$-limit of the minimisers  $v^*$ of
\[
    v \in \mathcal{S}^N \mapsto \int_0^{2\pi}  \frac{1}{p} | v'|^p + \bar h_{f,h} v + \bar H_{f,h} v' \dee \phi
\]
such that $\int_0^{2\pi} v \bar{f} = 0$.
We rescale $\bar g_{p} := \frac{v^*}{\|(v^*)'\|_{L^p(0,2\pi)}}$  so that $\|\bar g_p '\|_{L^p(0,2\pi)} = 1$, i.e. $\bar g_p$ is a direction in the topology induced by the $W^{1,p}$-seminorm.
In this setting the approaches commonly used in the literature so far correspond to the Hilbert space case $p=2$, see \cite[Section 5.2]{ADJ21} for a detailed discussion, and also \cite{EH12,HSU21,HPS15,SISG13,SSW15,SSW16,SW17}, which we here consider as a reference case for comparison of our approach.
Of course, one may also do this for the boundary form of the shape derivative, considering the minimiser of
\[
    v \in \mathcal{S}^N \mapsto \int_0^{2\pi}  \frac{1}{p} | v'|^p + \SecondDiscreteBoundaryFormOfDerivative v \dee \phi
\]
such that $\int_0^{2\pi} v \bar{f} = 0$.

\section{Numerical Experiments}

The numerical experiments carried out in this section combine the following Armijo--type descent method with one of the choices for a descent direction
described in the previous section:

\begin{algorithm}
Given $\bar f \in \mathcal S^N$\;
Solve for $\hat{u}_h$\;
Set $E = \frac{1}{2}\int_{B_h} (\hat{u}_h-\hat{z}_f)^2 f^2$\;
\For{$j=1,...,{\rm maxIt}$}{
    Solve for $\hat{p}_h$\;
    Construct descent $\bar g$\;
    Set $fOld = \bar f$\;
    \For{ $\sigma \in \{ 1/16,1/32,1/64,...\}$, and $\sigma \ge 10^{-8}$}{
        Set $\bar f = fOld + \sigma \bar g$\;
        Solve for $\hat{u}_h$\;
        \If{$\frac{1}{2}\int_{B_h} (\hat{u}_h - \hat{z}_f)^2 f^2 < E + 10^{-5} \sigma \langle I_h(\bar f) , \bar g\rangle $}{
            Set $ E = \frac{1}{2}\int_{B_h} (\hat{u}_h - \hat{z}_f)^2 f^2 $\;
            \Break\;
        }
    }
}
\caption{Our implemented Armijo algorithm}\label{alg:Armijo}
\end{algorithm}

We set ${\rm maxIt} = 250$.
%, we will also terminate the algorithm if $\sigma < 10^{-8}$ holds.
In our numerical implementation, whenever we set $f$, we rescale it to have the same square integral as the original domain.
The images of the grids are created with ParaView \cite{paraviewReference} and our finite element methods for state and adjoint equations are performed with DUNE \cite{duneReference}.
The boundary has discretisation with $N = 512$ and the triangulation $B_h$ is shown in Figure \ref{fig:Triangulation}.
\begin{figure}
    \centering
    \includegraphics[width=.6\linewidth]{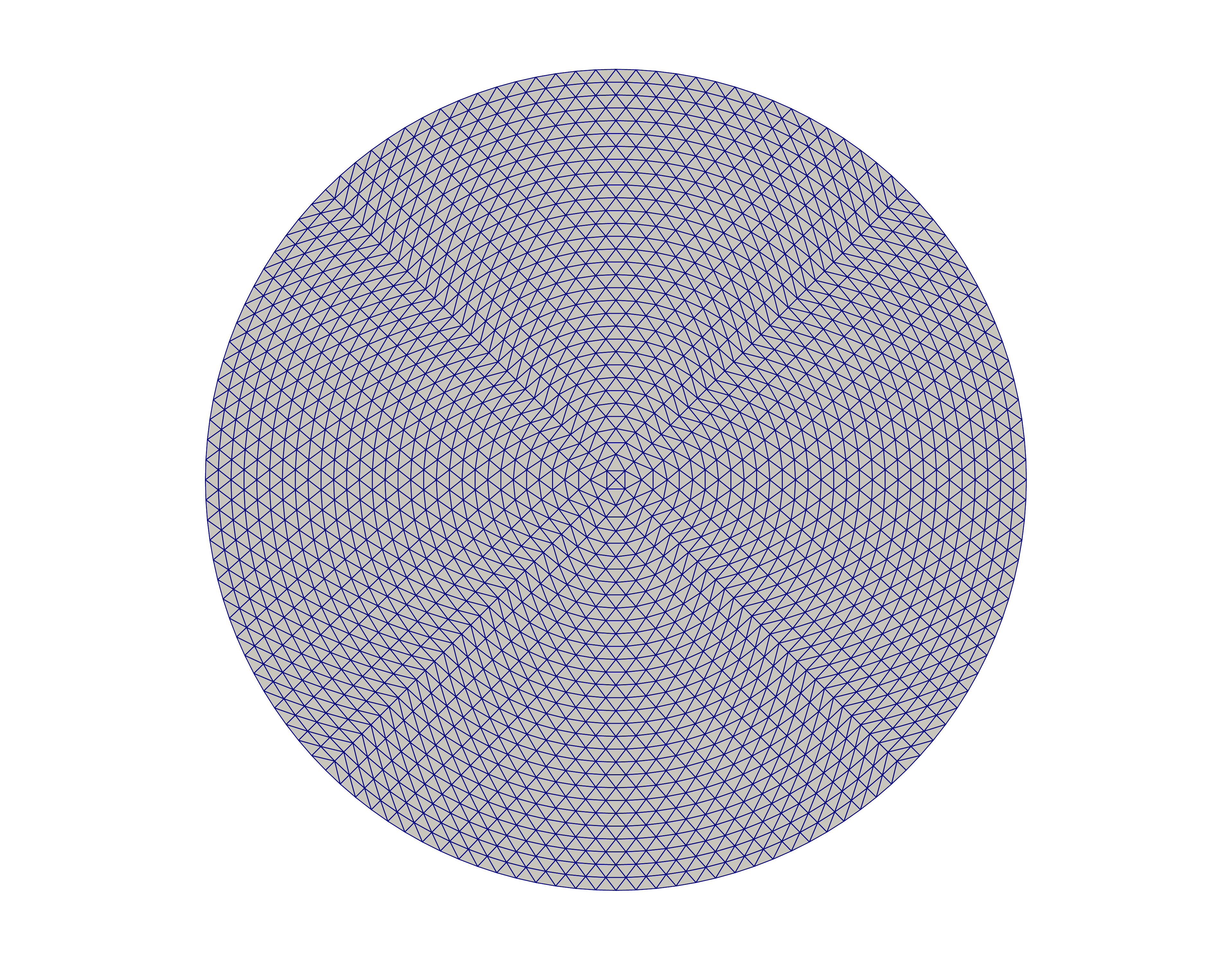}
    \caption{Triangulation of the computational domain.}
    \label{fig:Triangulation}
\end{figure}

We use a log scale to plot the graphs for the energy throughout the iterations of the experiments.
When the energy is not expected to vanish, as in the experiments in Sections \ref{sec:Experiment3} and \ref{sec:Experiment2}, we take away the lowest energy value attained by any of the experiments from all of the data, this value appears in the $y$ axis label of the graphs.
For the experiments in Sections \ref{sec:Experiment1} and \ref{sec:Experiment4}, the energy is expected to become very close to zero.
In the legends for the graphs, we abbreviate 'optimal transport' to 'OT'.

When we give the images of the domains, we will display the image of $\mathcal{T}_h$ under the map $\Phi_{f}$.
It is important to recall that this is not the computational domain.
Since the mesh does not deform, we do not need to worry about degeneration, however it is worth mentioning that when the image $\Phi_f(\mathcal{T}_h)$ has a low quality, one will expect that the finite element approximations become poor, in much the same way as for a degenerate mesh.
In practice, when this happens one may wish to remesh $\mathcal{T}_h$ in certain ways, or the boundary mesh.

\subsection{An experiment with $F=0$}\label{sec:Experiment3}
For this experiment, we set $F(x_1,x_2) = 0$ and $z(x_1,x_2) = |x_1+x_2| + |x_1-x_2|$.
Since $F = 0$ it follows that $u=0$, therefore when considering the boundary form of the shape derivative which appears in \eqref{eq:BoundaryDerivativeGeneralCase}, we see that when the boundary of the domain is in a level-set of $z^2$, the energy will be critical over directions which preserve the volume.
When starting with $f= 1$, we expect the final domain to be the square $\left(-\frac{\sqrt{\pi}}{2},\frac{\sqrt{\pi}}{2} \right)^2$.
After $250$ iterations, the Lipschitz optimal transport method with the boundary form of derivative gives the domain on the left of Figure \ref{fig:Experiment3FinalDomain}.
The method using the Lipschitz formula with boundary form of derivative terminated after $103$ iterations, the domain at this point is given in the middle of Figure \ref{fig:Experiment3FinalDomain}.
At this point, the resulting shape has very low energy and is very close to the shape we expected to be minimal.
The result of $250$ iterations of the $H^1$ method is given on the right of Figure \ref{fig:Experiment3FinalDomain}.
\begin{figure}
    \centering
    \includegraphics[width=.3\linewidth]{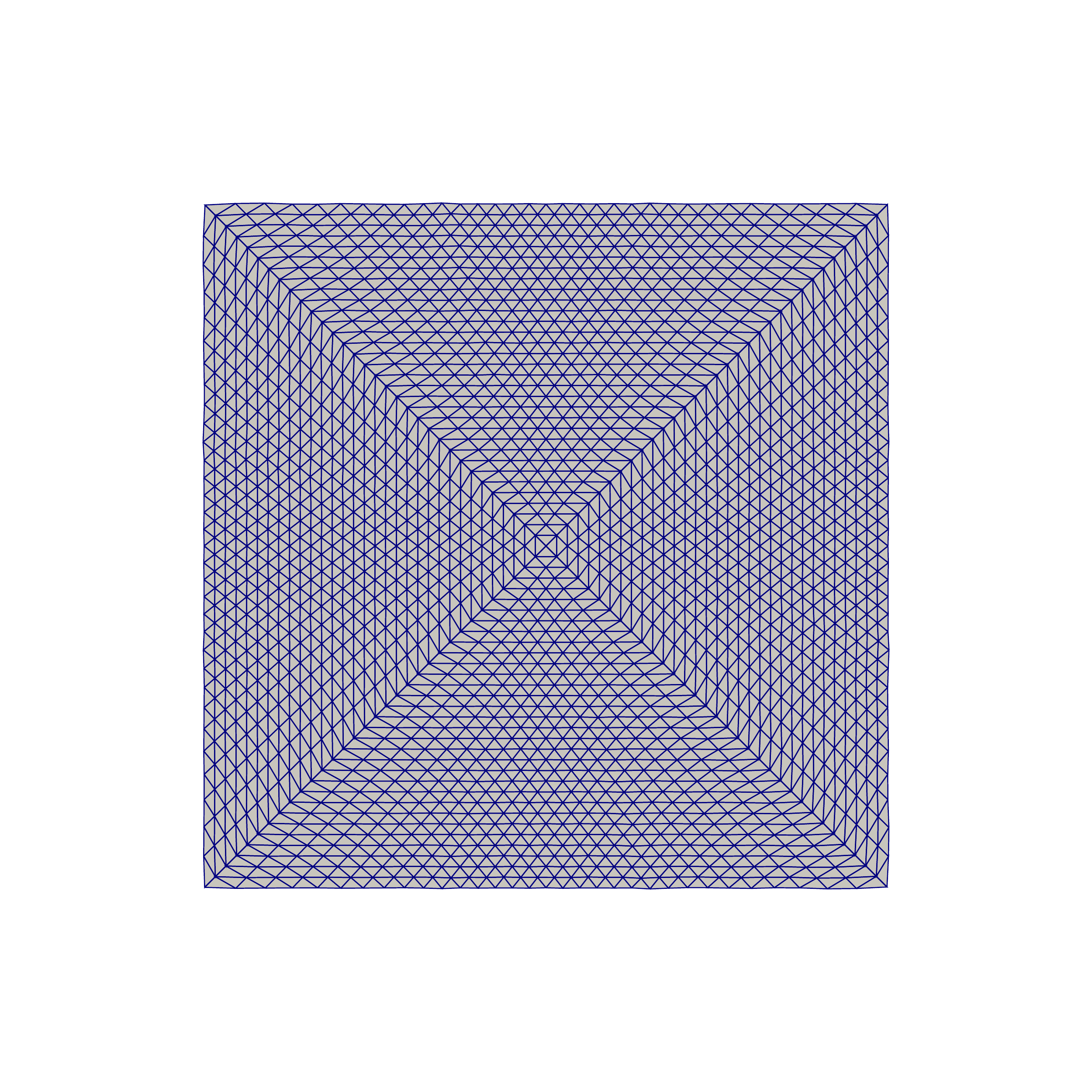} \hspace{0.02\linewidth}
    \includegraphics[width=.3\linewidth]{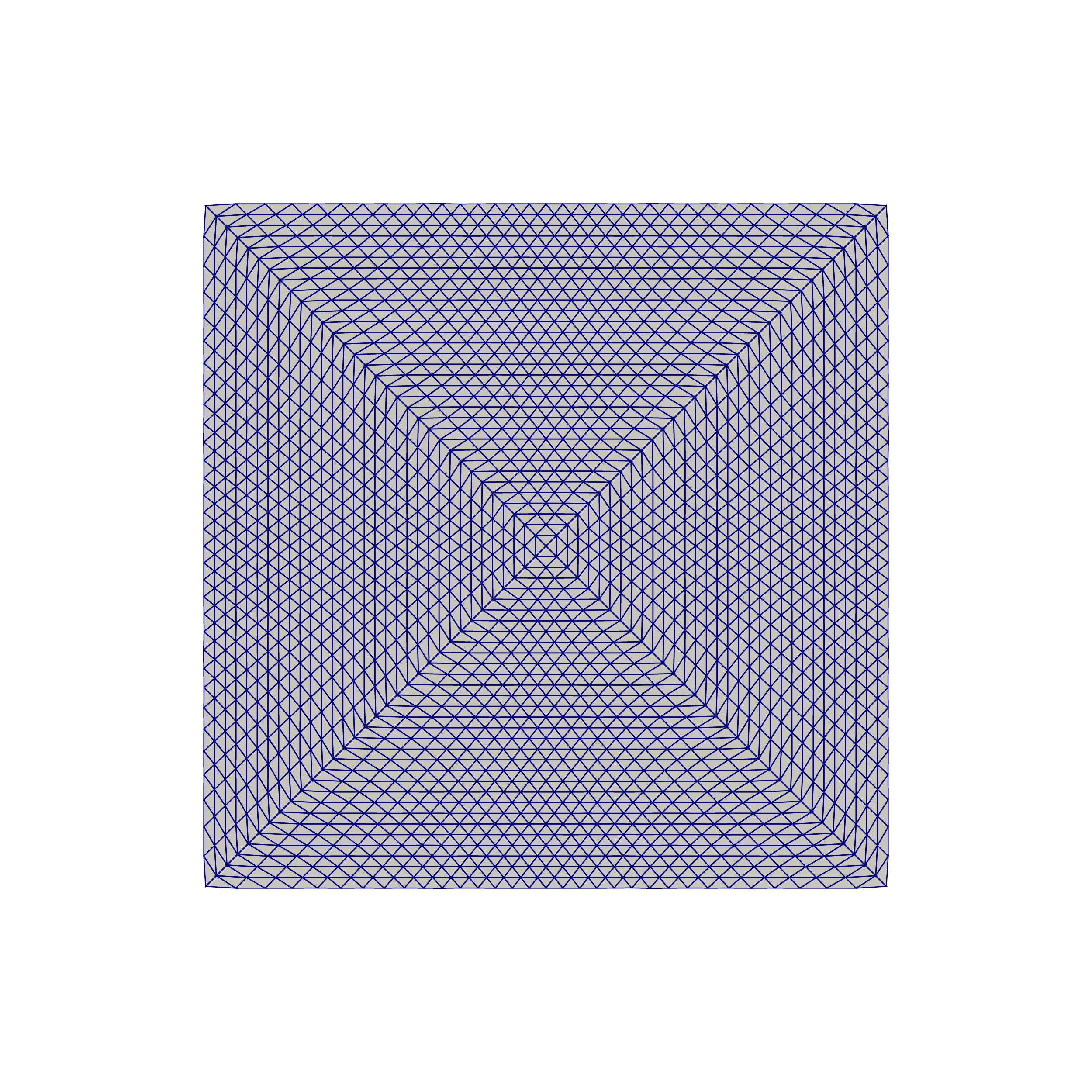} \hspace{0.02\linewidth}
    \includegraphics[width=.3\linewidth]{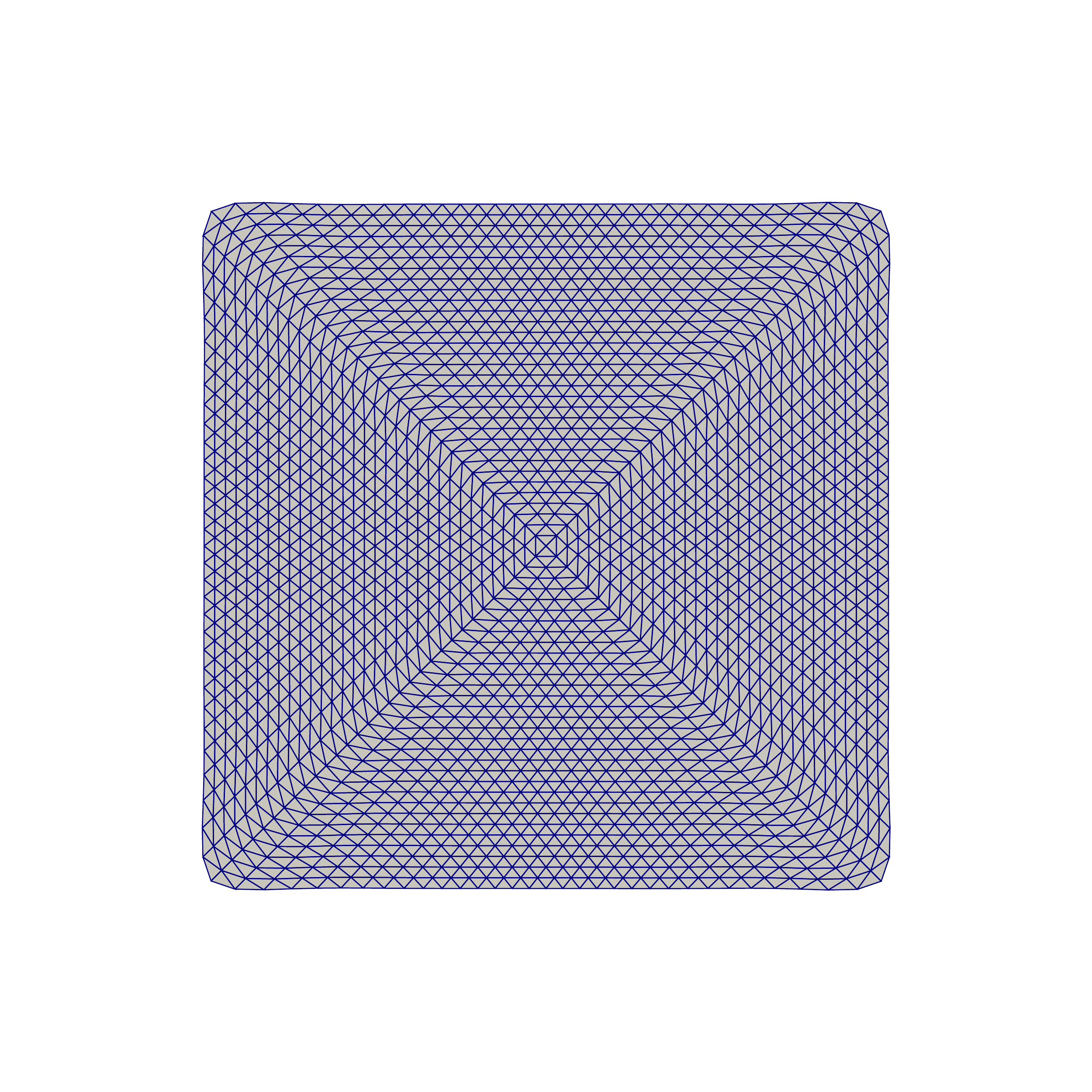}
    \caption{Final domains for the experiment in Section \ref{sec:Experiment3} with Lipschitz optimal transport descent (left), Lipschitz formula descent (middle) and $H^1$ descent (right).}
    \label{fig:Experiment3FinalDomain}
\end{figure}
A graph of energy throughout the iterations is given in Figure \ref{fig:Experiment3Graph}.
\begin{figure}
    \centering
    \includegraphics[width=.9\linewidth]{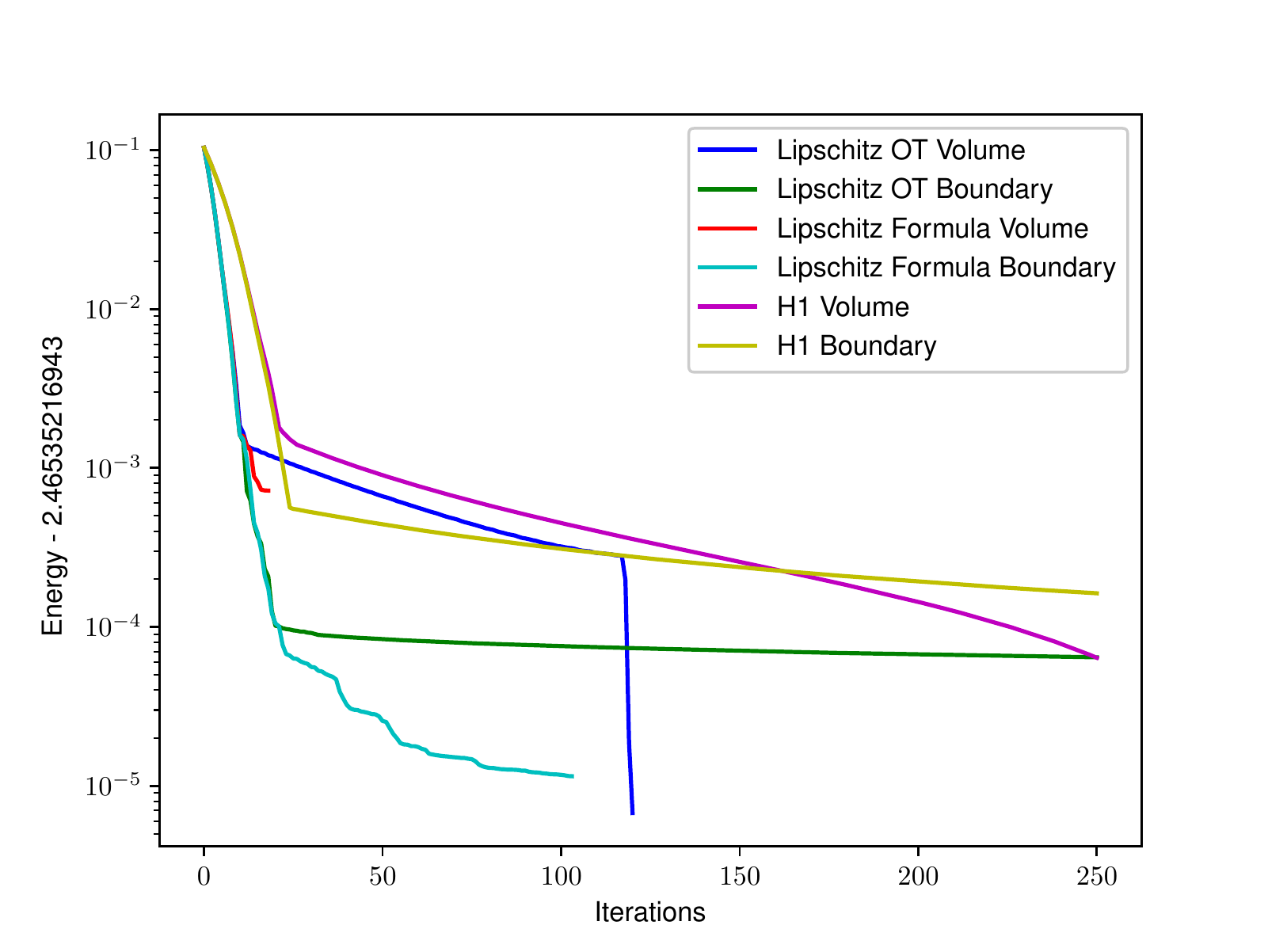}
    \caption{Graph of the energy for the iterates in the experiment in Section \ref{sec:Experiment3}}
    \label{fig:Experiment3Graph}
\end{figure}
A graph of the magnitude of the discrete directional derivative throughout the iterations is given in Figure \ref{fig:Experiment3DiscreteDerivative}.
\begin{figure}
    \centering
    \includegraphics[width=.9\linewidth]{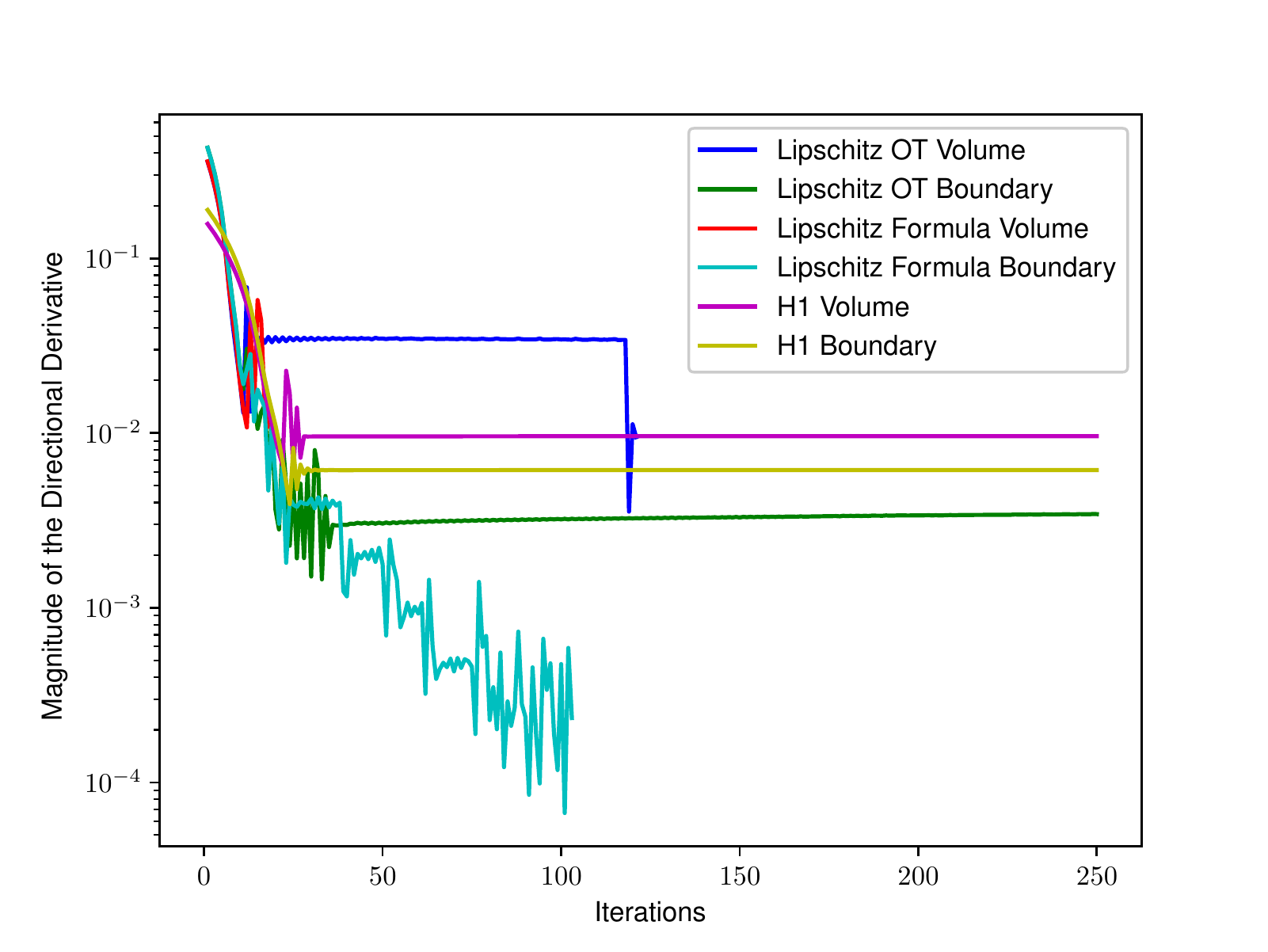}
    \caption{Graph of the magnitude of the discrete directional derivative for the iterates in the experiment in Section \ref{sec:Experiment3}}
    \label{fig:Experiment3DiscreteDerivative}
\end{figure}
The Lipschitz formula method with volume form of the derivative terminated prematurely after only $18$ steps, far from the minimum shape.
The Lipschitz optimal transport method with volume form of derivative terminated after $121$ steps, where we see that this energy has already become very low.
One may see that the corners from both the Lipschitz methods are highly developed, whereas they are rather curved for the $H^1$ method, this is highlighted in Figure \ref{fig:Experiment3FinalCorners} which gives a zoom on the corners of Figure \ref{fig:Experiment3FinalDomain}.
\begin{figure}
\centering
\includegraphics[width=.31\linewidth]{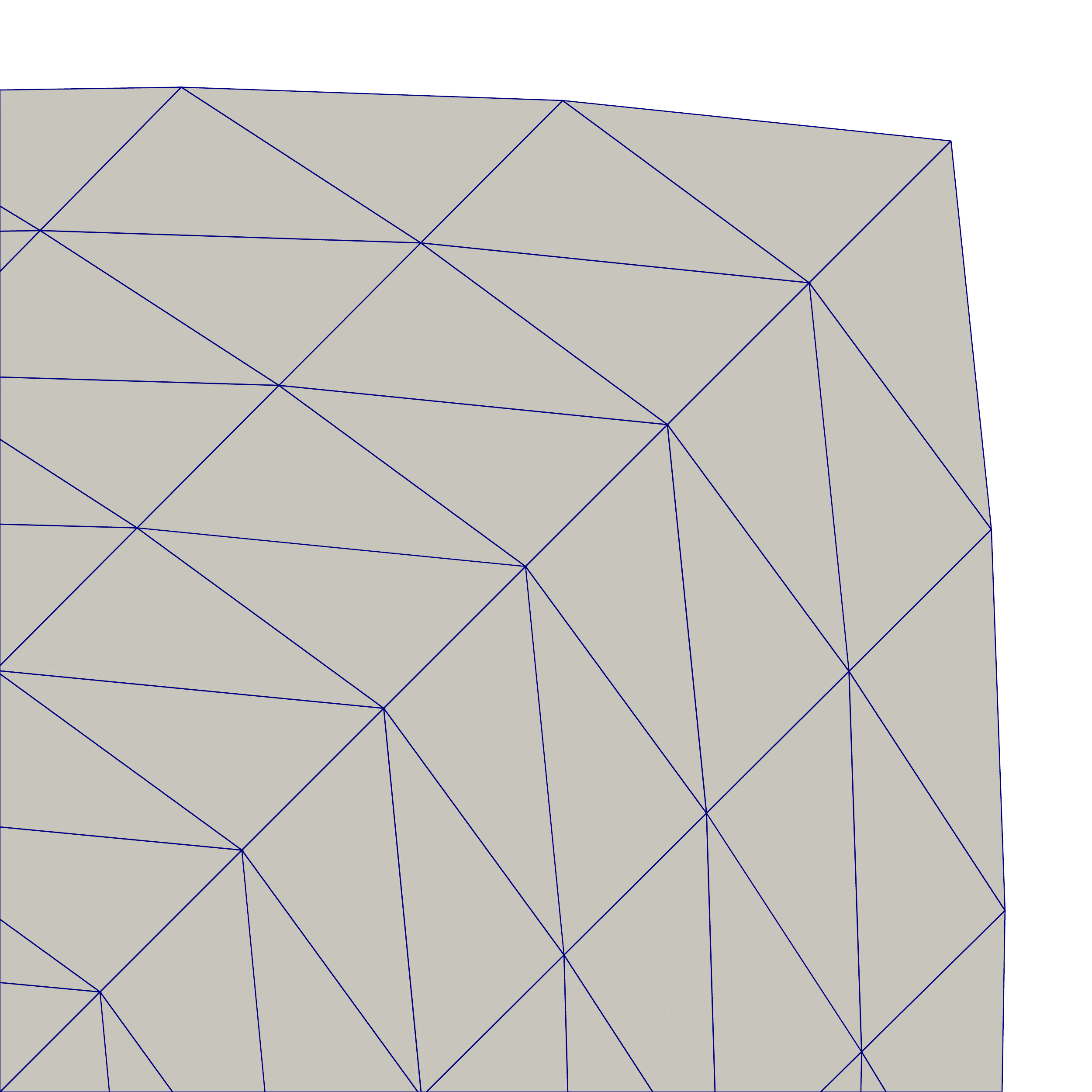}
\hspace{0.01\linewidth}
\includegraphics[width=.31\linewidth]{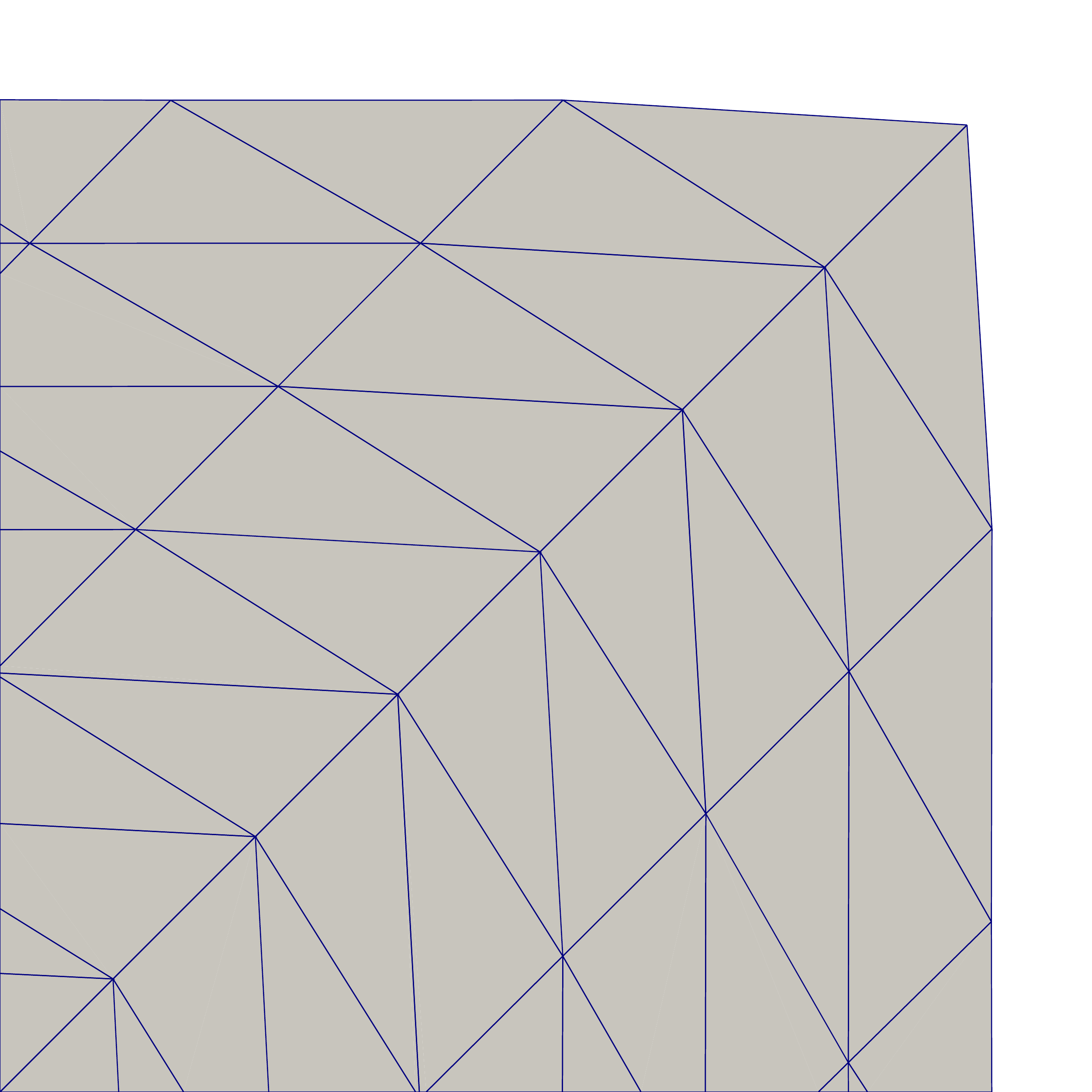}
\hspace{0.01\linewidth}
\includegraphics[width=.31\linewidth]{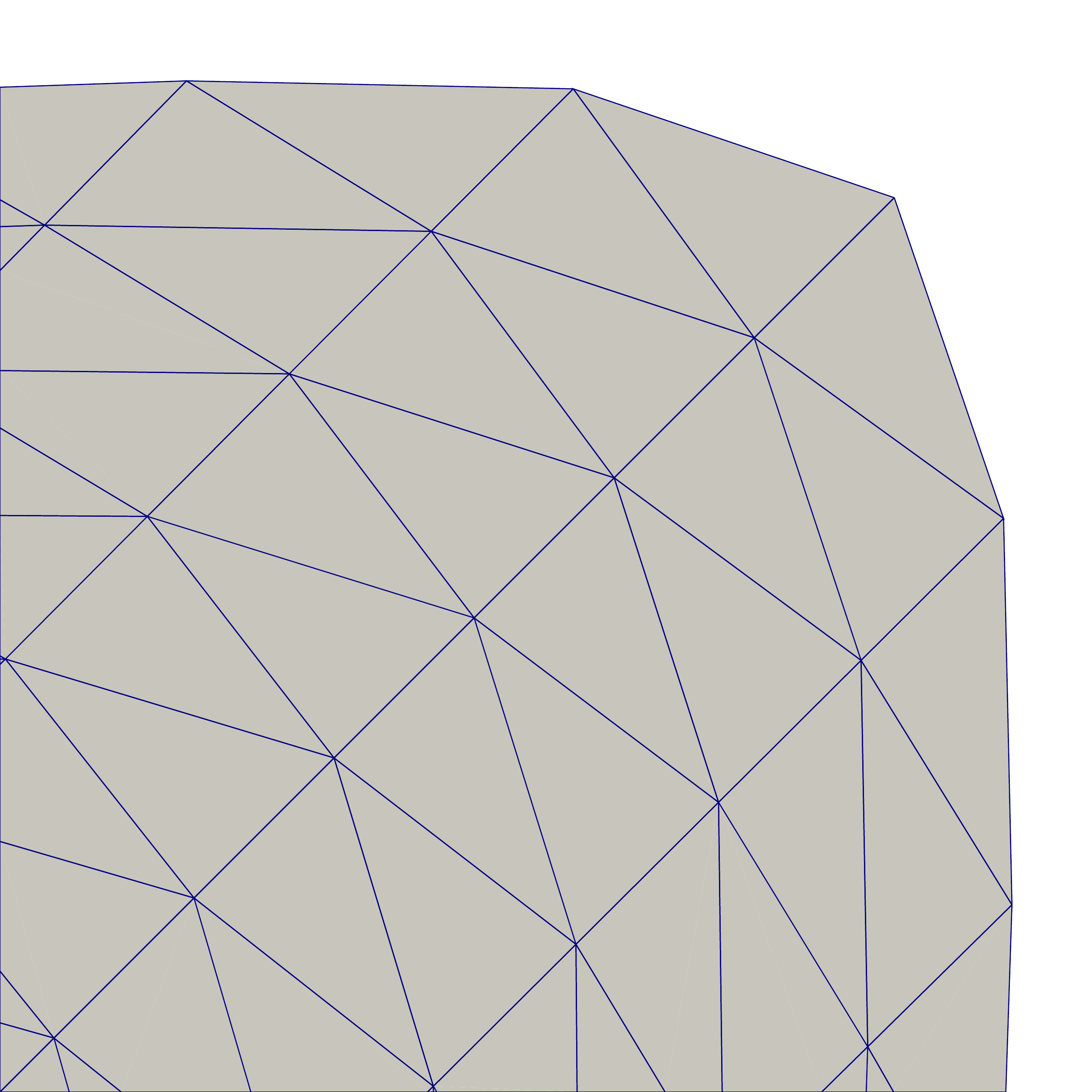}
\caption{Zoom on the top right 'corners' of the final domains for the experiment in Section \ref{sec:Experiment3} with Lipschitz optimal transport descent (left), Lipschitz formula descent (middle) and $H^1$ descent (right).}
\label{fig:Experiment3FinalCorners}
\end{figure}

\subsection{An experiment with $ -\Delta z = 4 F$} \label{sec:Experiment2}
For this experiment we set $F(x_1,x_2) = 1$ and $z(x_1,x_2) = 1-x_1^2-x_2^2$.
We notice that $-\Delta z = 4F$ and that $z$ vanishes on the unit circle.
We start this experiment with $f$ representing the square  $\left(-\frac{\sqrt{\pi}}{2},\frac{\sqrt{\pi}}{2} \right)^2$, which is shown in Figure \ref{fig:Experiment2Domain}.
\begin{figure}
    \centering
    \includegraphics[width=.6\linewidth]{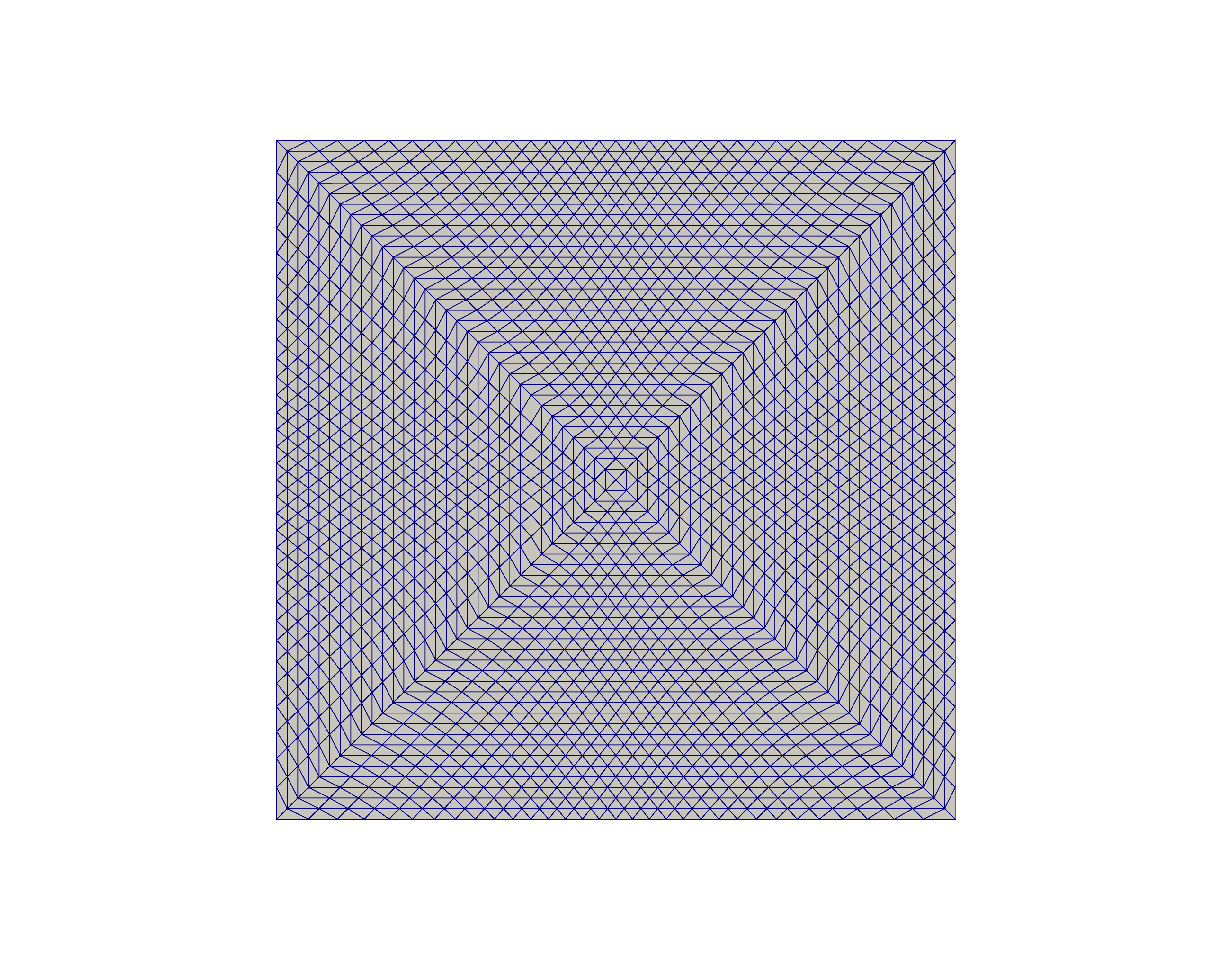}
    \caption{Initial domain for the experiment in Section \ref{sec:Experiment2}.}
    \label{fig:Experiment2Domain}
\end{figure}

In this experiment we provide the domain after $15$ iterations, this appears in Figure \ref{fig:Experiment2IntermediateDomain}.
This is given as such comparisons are of interest in practical applications, where computation time is a limiting factor.
\begin{figure}
\centering
\includegraphics[width=.45\linewidth]{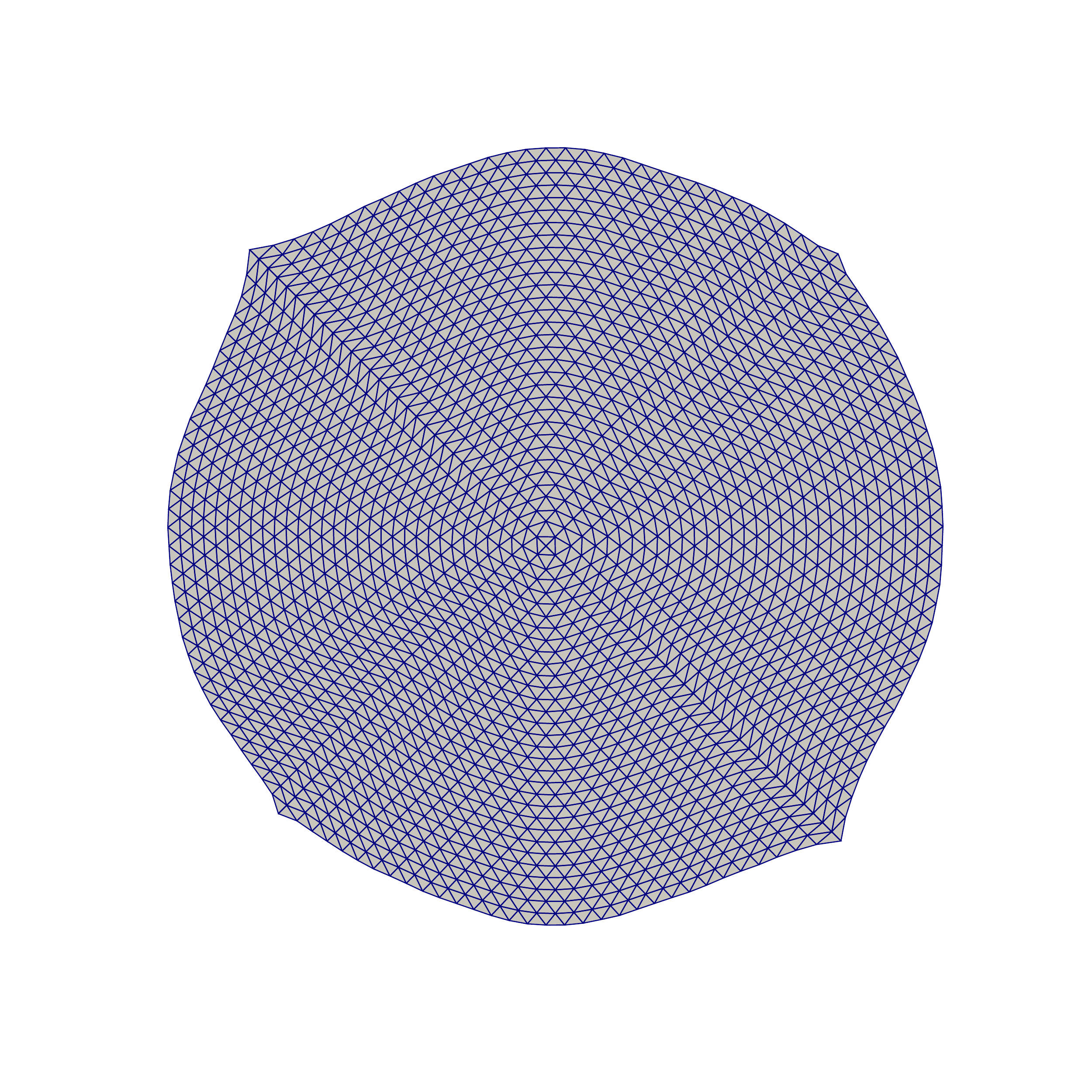}
\hspace{0.05\linewidth}
\includegraphics[width=.45\linewidth]{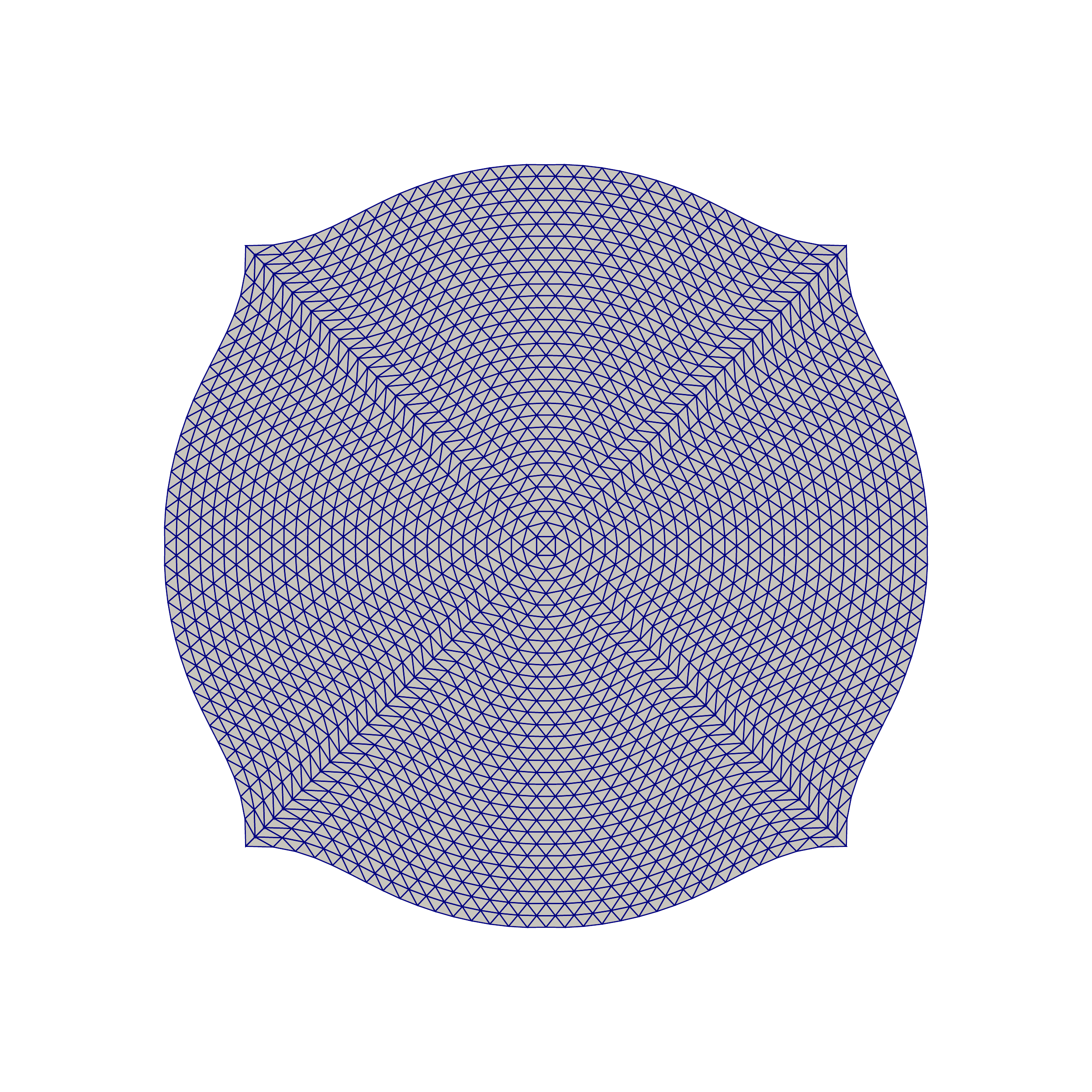}
\vspace{0.075\linewidth}
\includegraphics[width=.45\linewidth]{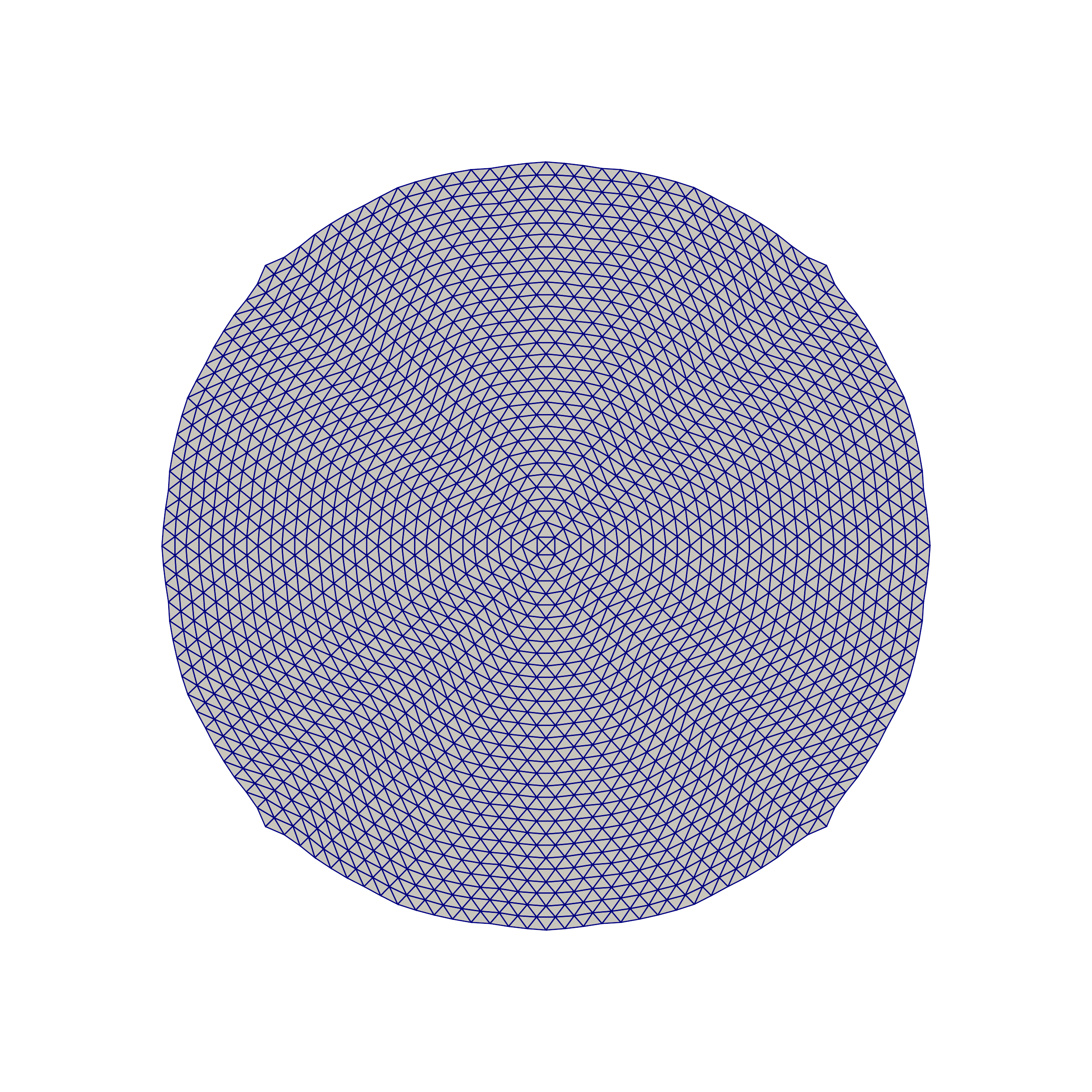}
\hspace{0.05\linewidth}
\includegraphics[width=.45\linewidth]{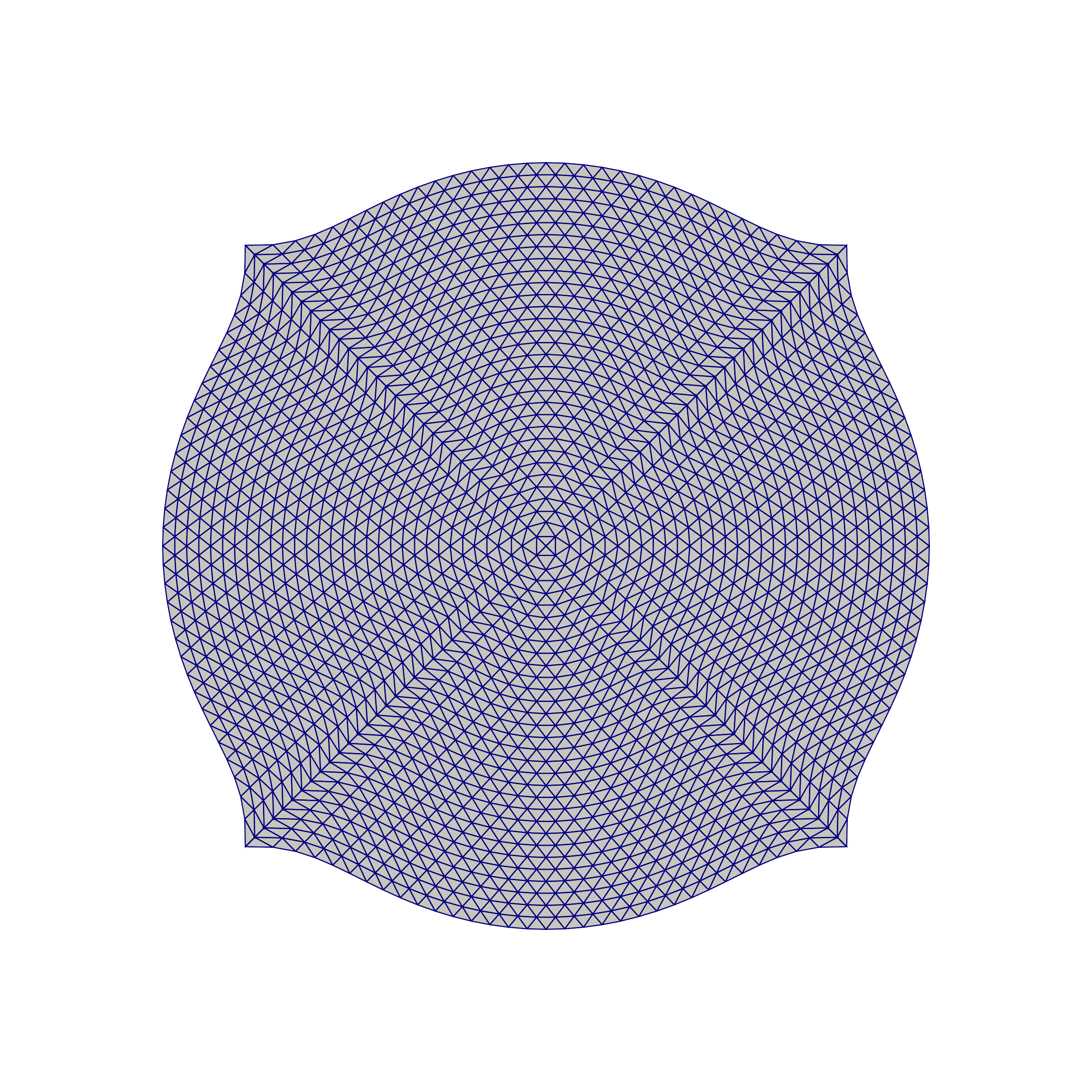}
\caption{Domains after 15 iterations for the experiment in Section \ref{sec:Experiment2} with Lipschitz formula descent (left) and $H^1$ descent (right) with the volume form (top) of the shape derivative and the boundary form (bottom).}
\label{fig:Experiment2IntermediateDomain}
\end{figure}
We see that even after only $15$ iterations the shapes are close to a circle, with both the Lipschitz methods outperforming the $H^1$ methods significantly.

After $250$ iterations, the Lipschitz optimal transport method with volume form of the shape derivative gives the domain on the left of Figure \ref{fig:Experiment2FinalDomain}.
The $H^1$ method with volume form of the shape derivative prematurely terminated after $31$ iterations and the domain at this point is shown on the right of Figure \ref{fig:Experiment2FinalDomain}.
\begin{figure}
\centering
\includegraphics[width=.45\linewidth]{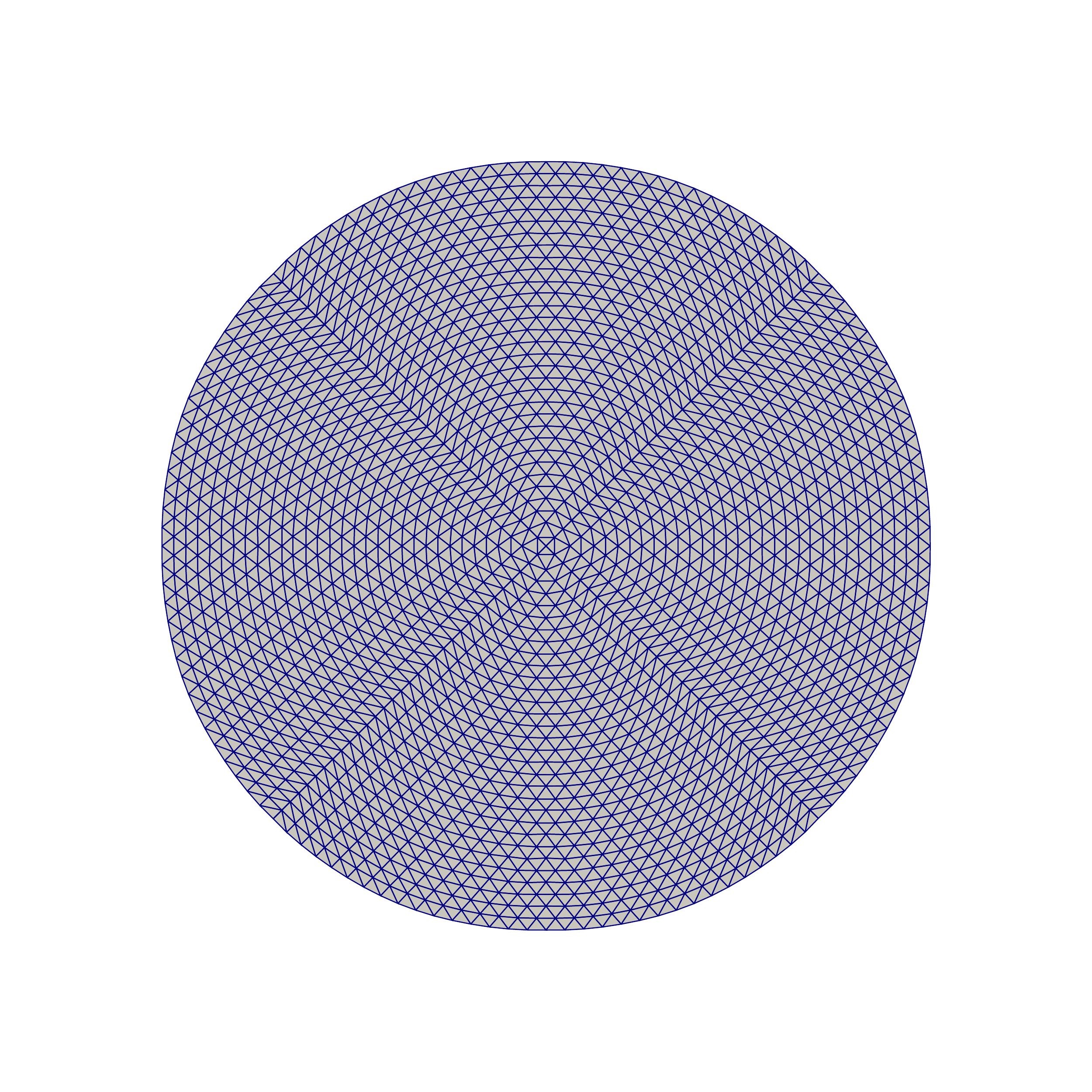}
\hspace{0.05\linewidth}
\includegraphics[width=.45\linewidth]{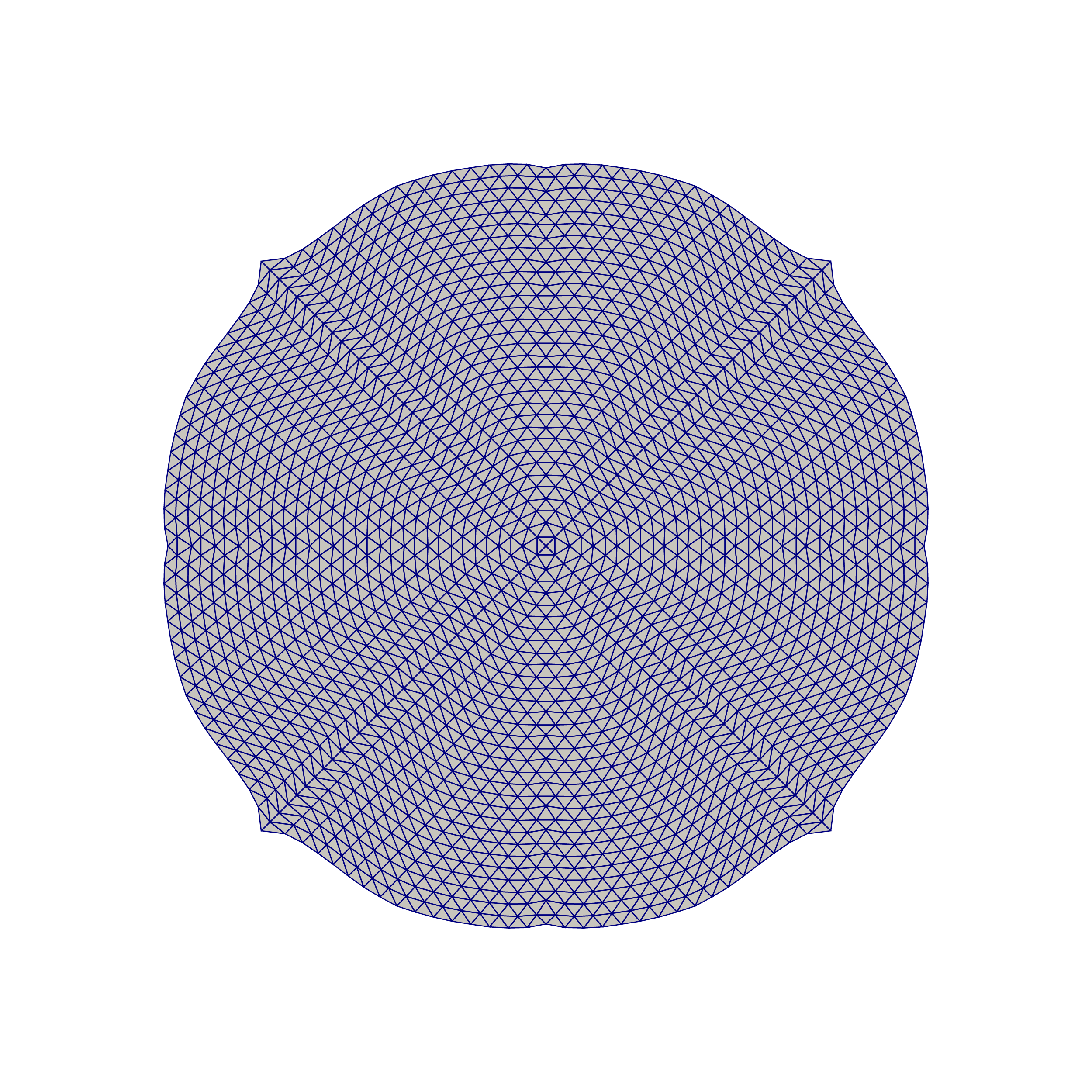}
\caption{Final domains for the experiment in Section \ref{sec:Experiment2} with Lipschitz optimal transport descent (left) and $H^1$ descent (right) with the volume form of the shape derivative.}
\label{fig:Experiment2FinalDomain}
\end{figure}
A graph of energy throughout the iterations is given in Figure \ref{fig:Experiment2Graph}.
\begin{figure}
    \centering
    \includegraphics[width=.9\linewidth]{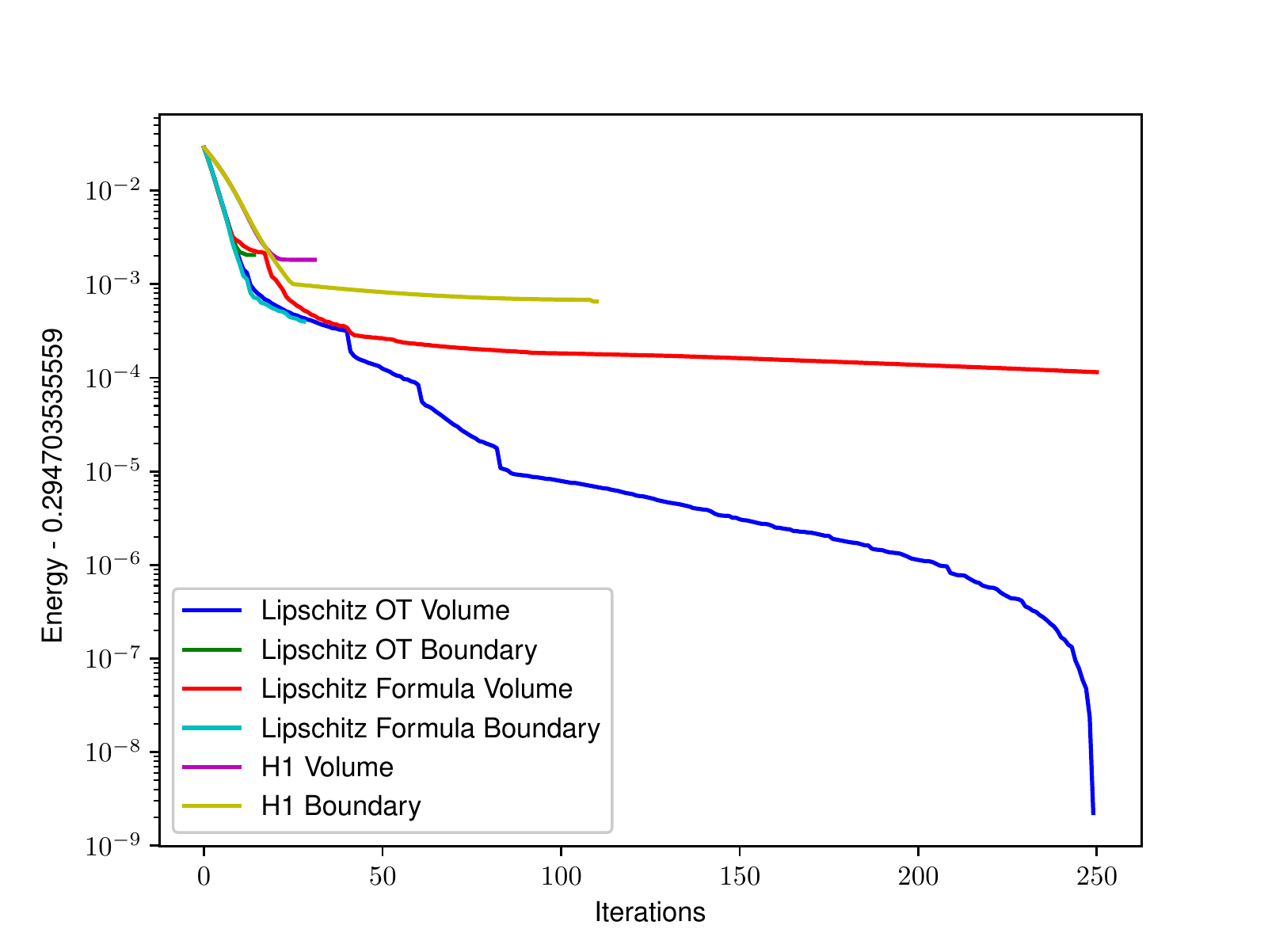}
    \caption{Graph of the energy for the iterates in the experiment in Section \ref{sec:Experiment2}}
    \label{fig:Experiment2Graph}
\end{figure}A graph of the magnitude of the discrete directional derivative throughout the iterations is given in Figure \ref{fig:Experiment2DiscreteDerivative}.
\begin{figure}
    \centering
    \includegraphics[width=.9\linewidth]{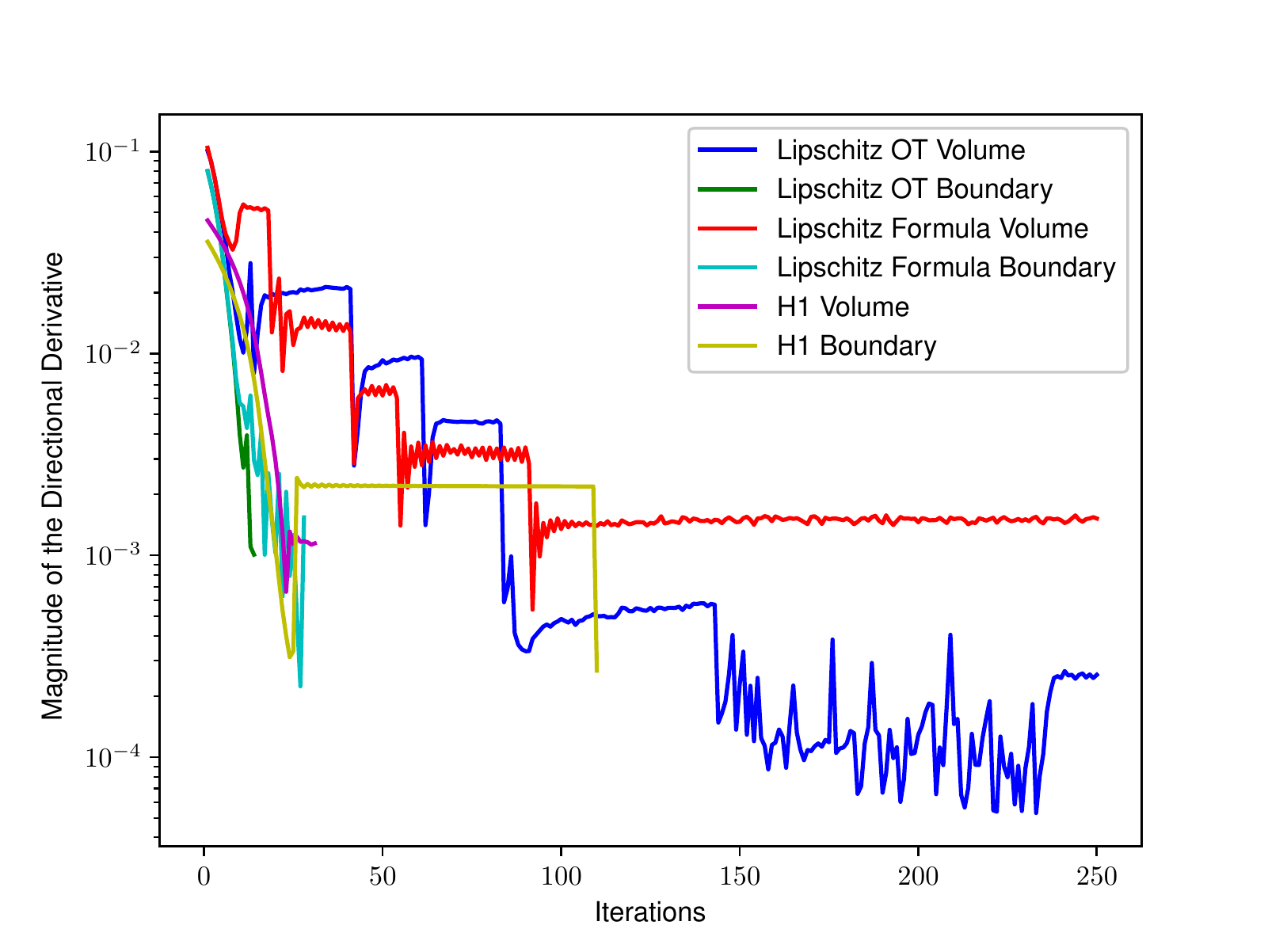}
    \caption{Graph of the magnitude of the discrete directional derivative for the iterates in the experiment in Section \ref{sec:Experiment2}}
    \label{fig:Experiment2DiscreteDerivative}
\end{figure}
We note that both $H^1$ methods terminate early, the boundary form of the shape derivative after $110$ iterations and the volume form of the shape derivative after $31$.
We postulate that this termination happens because the $H^1$ regularising methods are struggling to remove the corners.
We also see that both varieties of the Lipschitz methods with the boundary form of the shape derivative terminate prematurely.
The Lipschitz formula method terminated after $28$ steps and the Lipschitz optimal transport method after $14$.
One might attribute this to the Lipschitz methods struggling with the boundary form of the shape derivative.

It is seen that the corners appearing in the $H^1$ method, which are artifacts of the original grid, cause difficulties for the $H^1$ method, whereas the Lipschitz methods were able to remove them.
These artifact corners also make an appearance in \cite[Figure 2]{HP15} when starting with a square as initial guess and considering a disk as target.

\subsection{An experiment with $-\Delta z = F$} \label{sec:Experiment1}
For this experiment we set $F(x_1,x_2) = 16 \pi - 32x_1^2 -32x_2^2$ and $z(x_1,x_2) = (\pi-4x_1^2)(\pi-4x_2^2)$.
We notice that $-\Delta z = F$ and that $z(x_1, \pm \sqrt{\pi}/2) = z(\pm\sqrt{\pi}/2,x_2) = 0 $ for all $x_1,x_2 \in \R$.
An immediate consequence of these facts is that there is a domain which attains zero energy, the square $\left(-\frac{\sqrt{\pi}}{2}, \frac{\sqrt{\pi}}{2}\right)^2$.
The experiment is started with $f=1$.
After $250$ iterations, the Lipschitz optimal transport method with volume form of the shape derivative gives the domain
%After $500$ iterations, the Lipschitz method gives the domain
on the left of Figure \ref{fig:Experiment1FinalDomain} and after $250$ iterations the $H^1$ method with volume form of the shape derivative gives the domain on the right of Figure \ref{fig:Experiment1FinalDomain}.
\begin{figure}
\centering
\includegraphics[width=.45\linewidth]{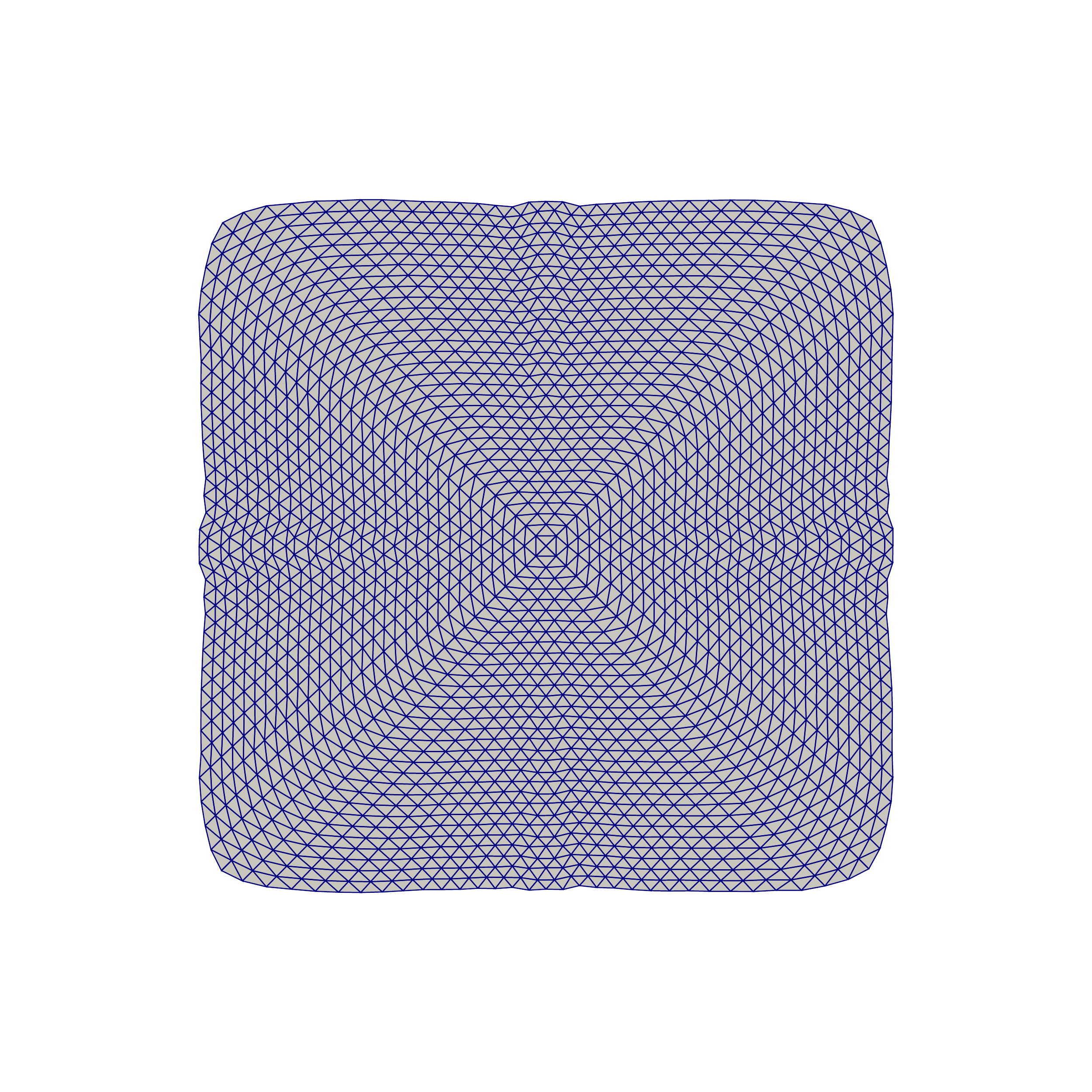}
\hspace{0.05\linewidth}
\includegraphics[width=.45\linewidth]{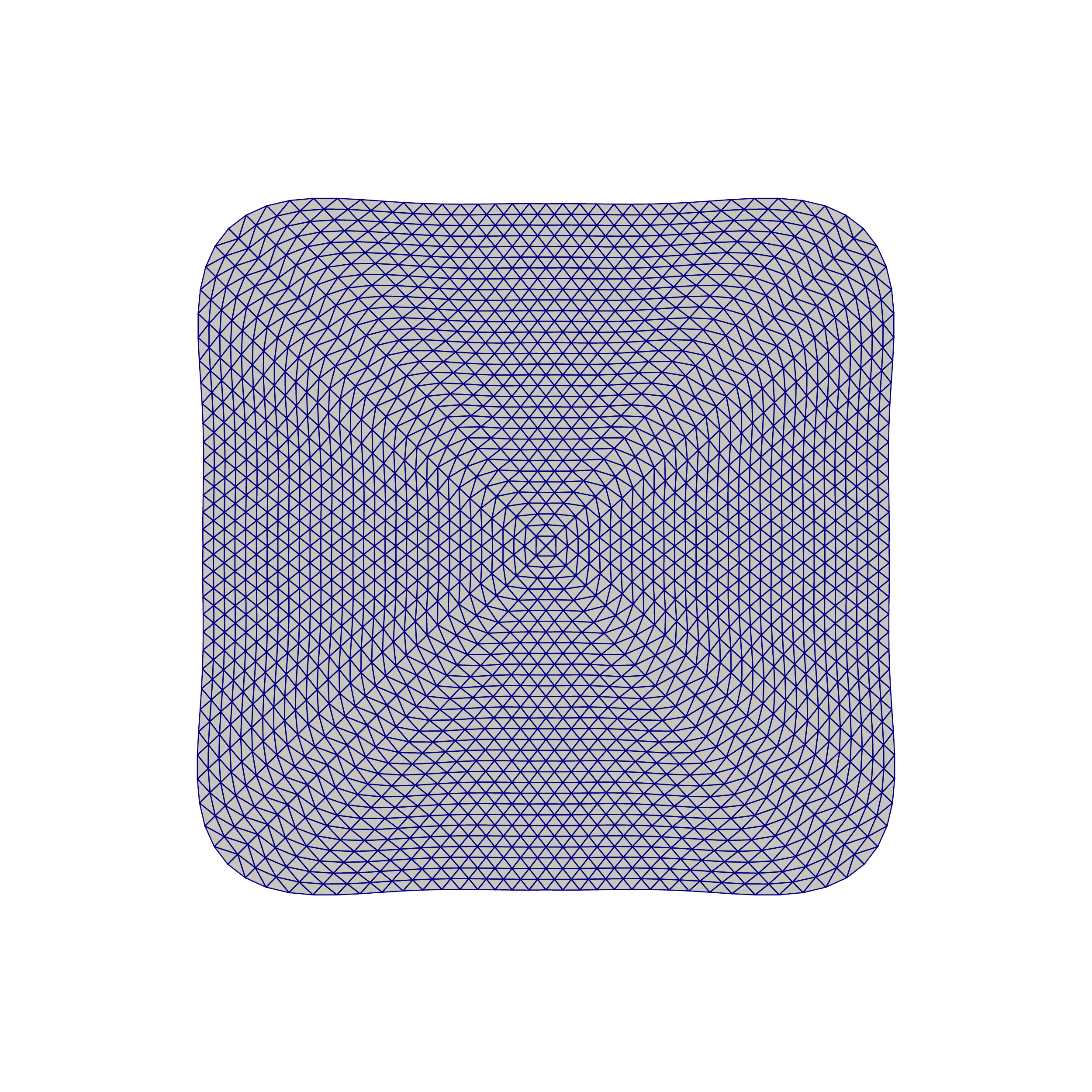}
\caption{Final domains for the experiment in Section \ref{sec:Experiment1} with Lipschitz optimal transport descent (left) and $H^1$ descent (right) with the volume form of the shape derivative.}
\label{fig:Experiment1FinalDomain}
\end{figure}
A graph of energy throughout the iterations is given in Figure \ref{fig:Experiment1Graph}.
\begin{figure}
    \centering
    \includegraphics[width=.9\linewidth]{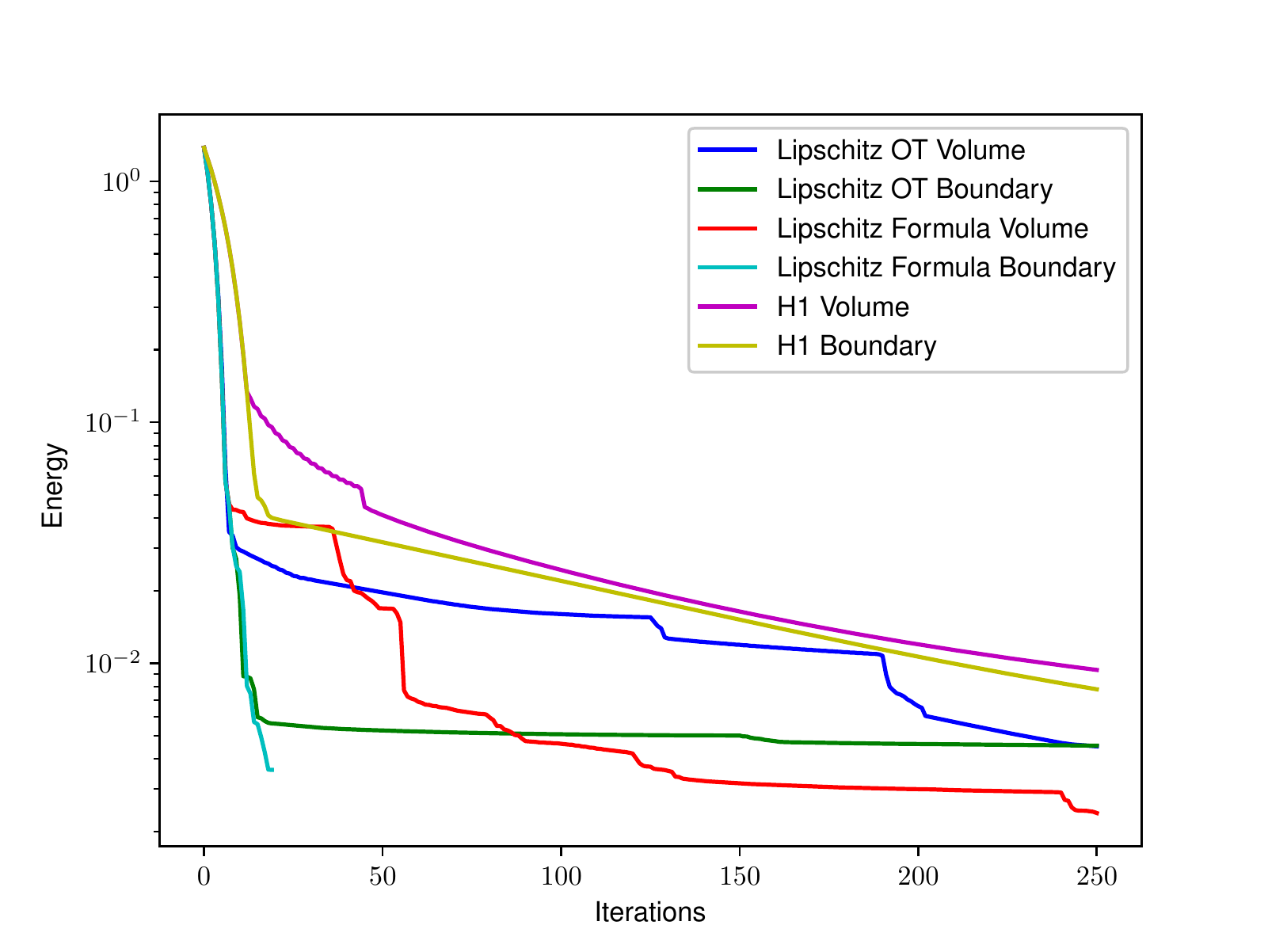}
    \caption{Graph of the energy for the iterates in the experiment in Section \ref{sec:Experiment1}}
    \label{fig:Experiment1Graph}
\end{figure}
A graph of the magnitude of the discrete directional derivative throughout the iterations is given in Figure \ref{fig:Experiment1DiscreteDerivative}.
\begin{figure}
    \centering
    \includegraphics[width=.9\linewidth]{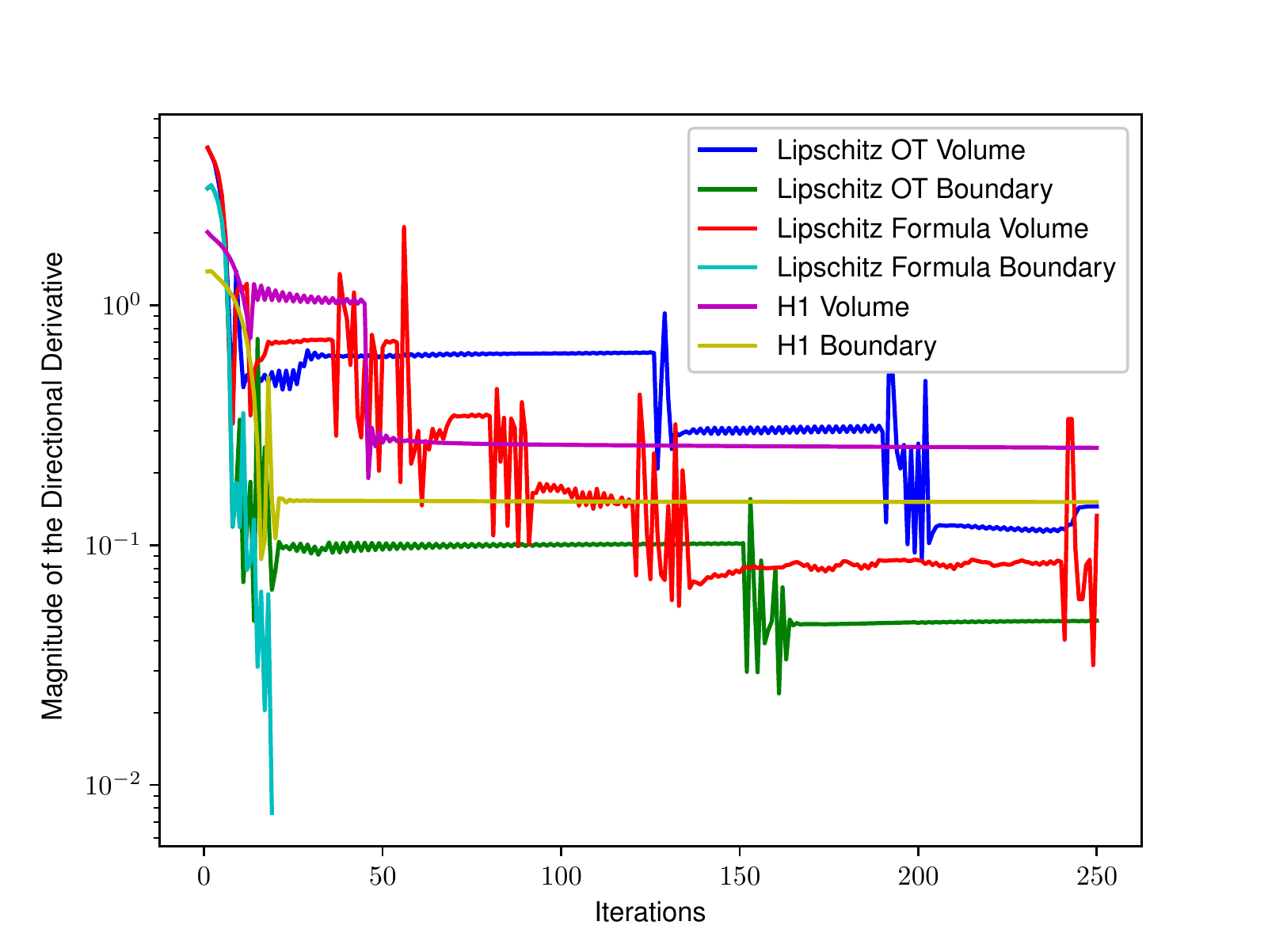}
    \caption{Graph of the magnitude of the discrete directional derivative for the iterates in the experiment in Section \ref{sec:Experiment1}}
    \label{fig:Experiment1DiscreteDerivative}
\end{figure}
It is seen that the method using the Lipschitz formula with the boundary form of the derivative terminates early, after $19$ iterations.
We note that this method, despite early termination, has a lower energy than all but one other method and we attribute the early termination to the fact that its energy has become so low.

Here we see that none of the methods perform particularly well, however it is clear that the Lipschitz methods are outperforming the $H^1$ methods in terms of energy minimisation and in terms of the sharpness of the corners.

\subsection{An experiment with known minimum which is not a Lipschitz domain} \label{sec:Experiment4}
For this experiment we set $F(x_1,x_2) = 1 $ and
\[
    z(x_1,x_2) = \frac{1}{8} - \frac{1}{4} \min \left( \left( x_1-\frac{1}{\sqrt{2}}\right)^2,\left( x_1+\frac{1}{\sqrt{2}} \right)^2 \right) - \frac{1}{4} x_2^2.
\]
We see that away from $x_1 = 0$, $-\Delta z = F$.
Therefore it is expected that the double ball, $B(y_+,\frac{1}{\sqrt{2}})\cup B(y_-,\frac{1}{\sqrt{2}})$ for $y_{\pm} = (\pm \frac{1}{\sqrt{2}},0)^t$ is a minimising domain, since it should attain zero energy.
Notice that this double ball is not a Lipschitz domain and that the $f$ which represents the domain has zeroes, therefore this experiment does not fit into the theory we have presented.
After $73$ iterations, the Lipschitz formula method with volume form of the derivative gives the domain on the left of Figure \ref{fig:Experiment4FinalDomain} and the $H^1$ method with volume form of the shape derivative gives the domain on the right of Figure \ref{fig:Experiment4FinalDomain}.
\begin{figure}
\centering
\includegraphics[width=.45\linewidth]{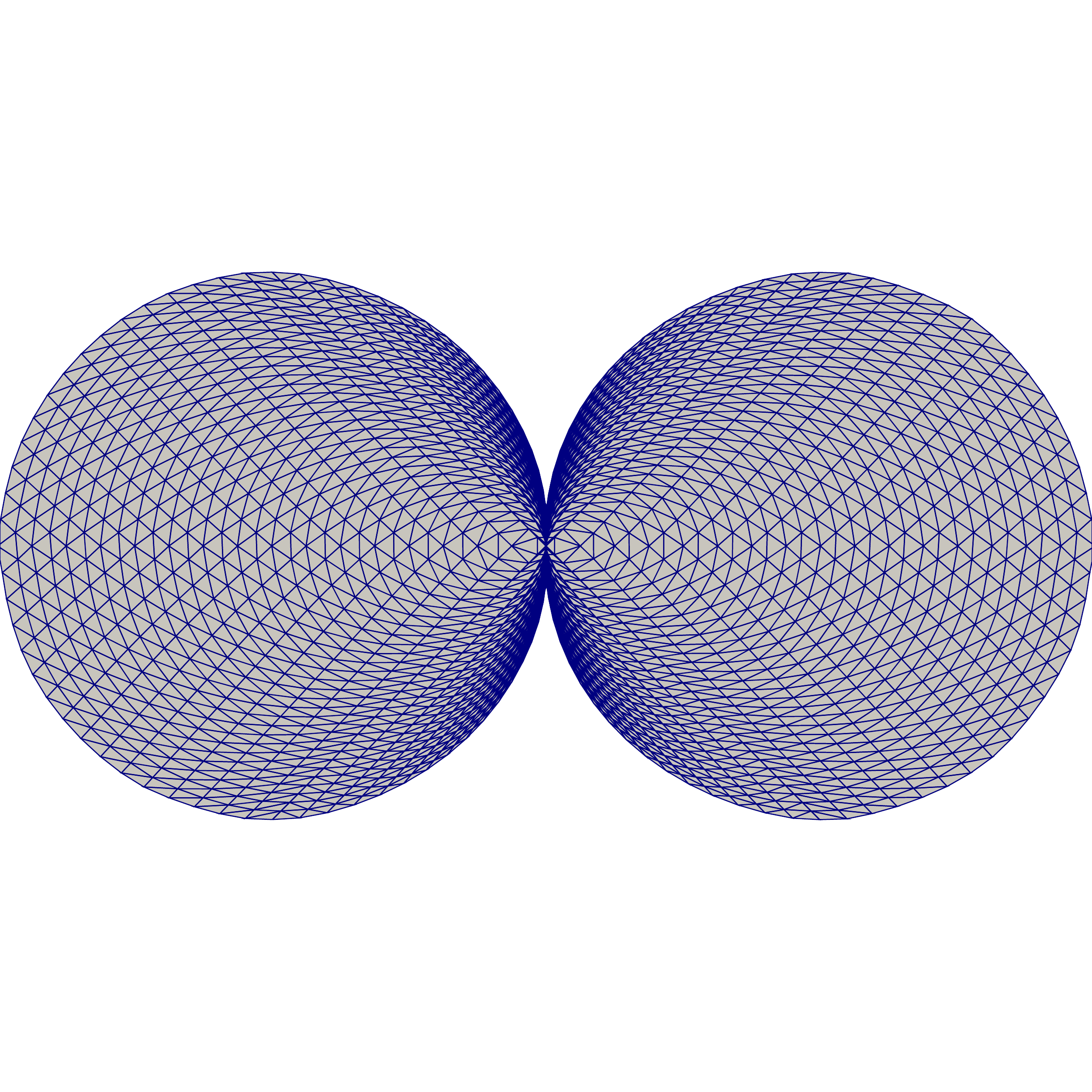}
\hspace{0.05\linewidth}
\includegraphics[width=.45\linewidth]{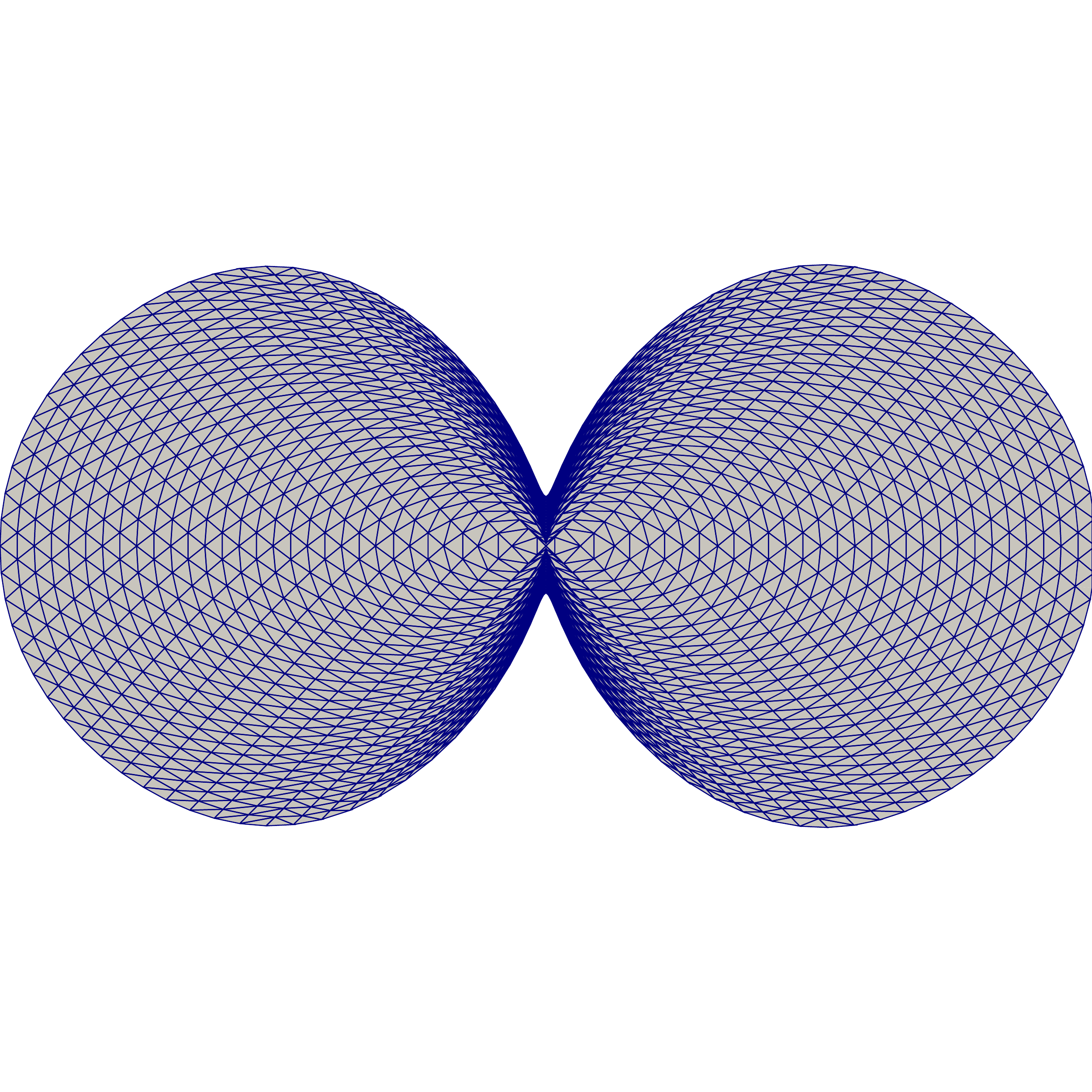}
\caption{Final domains for the experiment in Section \ref{sec:Experiment4} with Lipschitz formula descent (left) and $H^1$ descent (right) with the volume form of the shape derivative.}
\label{fig:Experiment4FinalDomain}
\end{figure}
A graph of energy throughout the iterations is given in Figure \ref{fig:Experiment4Graph}.
\begin{figure}
    \centering
    \includegraphics[width=.9\linewidth]{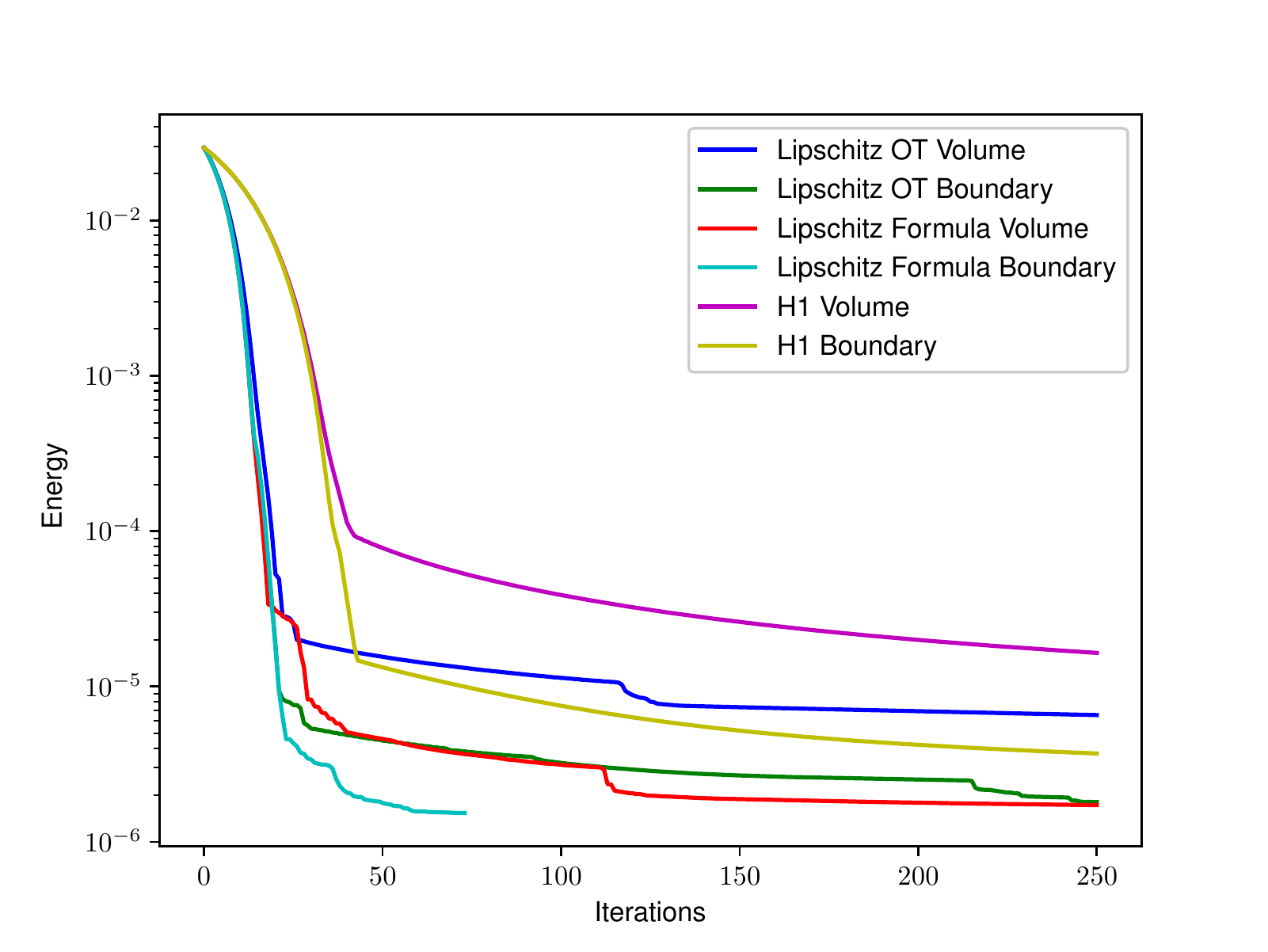}
    \caption{Graph of the energy for the iterates in the experiment in Section \ref{sec:Experiment4}}
    \label{fig:Experiment4Graph}
\end{figure}
A graph of the magnitude of the discrete directional derivative throughout the iterations is given in Figure \ref{fig:Experiment4DiscreteDerivative}.
\begin{figure}
    \centering
    \includegraphics[width=.9\linewidth]{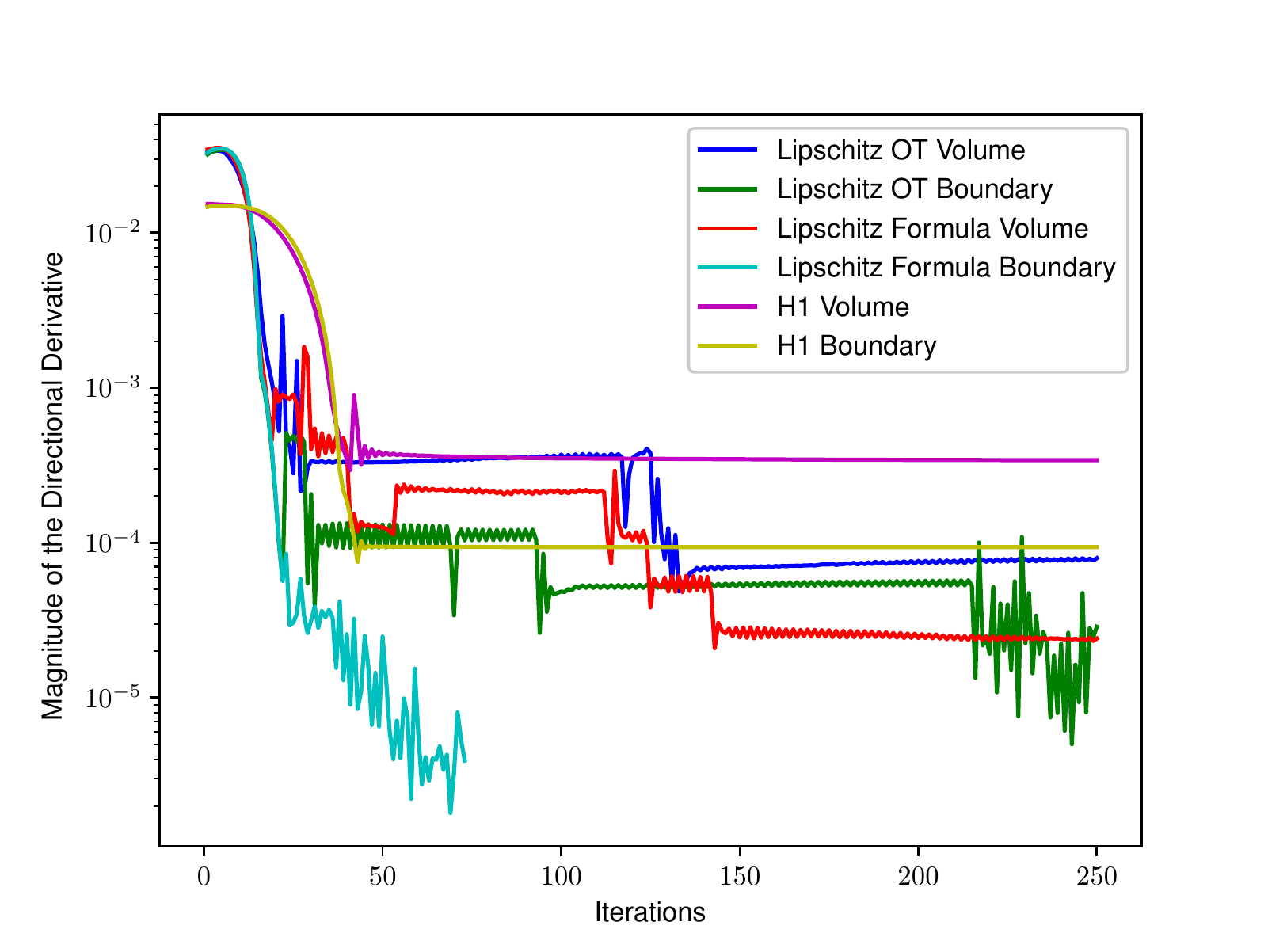}
    \caption{Graph of the magnitude of the discrete directional derivative for the iterates in the experiment in Section \ref{sec:Experiment4}}
    \label{fig:Experiment4DiscreteDerivative}
\end{figure}
We note that the Lipschitz formula method with volume form of derivative terminates after $73$ iterations, where one might attribute this to how close to the optimal shape it appears to have attained. 

We see that all methods seem to cope relatively well.
Both of the Lipschitz methods appear to perform much better than the $H^1$ method at forming the cusp and the domain appears more circular.

\subsection{Comments on experiments}
Over all of the experiments, we see that the Lipschitz methods outperform the given $H^1$ method.
Regularly for the Lipschitz approach, the formula method appears better than the optimal transport method, but lacks the capability to be generalised to higher dimensions.
We note that many more algorithms for solving the optimal transport are available and perhaps others may be better suited to this problem.

It is also worth mentioning the difference in CPU time it takes to calculate the directions for each of the different methods.
Both the Lipschitz methods are slower than the $H^1$ method for a single evaluation.
In the experiment which appears in Section \ref{sec:Experiment2}, we displayed some of the domains produced after $15$ iterations; we also calculated the time taken for the $15$ iterations.
We report only on the time taken when using the volume form of the derivative.
The Lipschitz optimal transport method took approximately $382$ seconds, the Lipschitz formula method roughly $87$ seconds and the $H^1$ method took about $68$ seconds.

With regards to these comparisons, it is noteworthy that the code has not been developed with efficiency in mind.
First of all, we expect that the parts which are not solver related can be made significantly faster.
In terms of finding the directions for descent, we expect that the Lipschitz optimal transport method can be made significantly more efficient in practice.

In practice, it is of course worth considering that one might wish to apply a number of iterations of a $H^1$ method in order to get a first outline of an optimised shape quickly, then swap to a more expensive Lipschitz method to finish.
This strategy may be of particular relevance in higher dimensional examples where one no longer has access to the formula approach.

\section{Conclusion}
In this article we introduce a novel method for the implementation of shape optimisation with Lipschitz domains. We propose to use the shape derivative to determine deformation fields which represent steepest descent directions of the shape functional in the $W^{1,\infty}$ topology. The idea of our approach is demonstrated for shape optimisation of two-dimensional star-shaped domains. We also highlight the connections to optimal transport, for which discretisation methods are available. We present several numerical experiments illustrating that our approach seems to be superior in the considered examples over the existing Hilbert space method discussed, in particular in developing optimal shapes with corners and in providing a quicker energy descent.

\section*{Acknowledgements}
This work is part of the project P8 of the German Research Foundation Priority Programme 1962, whose support is gratefully acknowledged by the second and the third author.

% \bibliographystyle{alpha}
% \bibliography{bib}
\printbibliography

\end{document}